\providecommand{\U}[1]{\protect\rule{.1in}{.1in}}
\newtheorem{theo}{Theorem}[section]
\newtheorem{prop}[theo]{Proposition}
\newtheorem{lem}[theo]{Lemma}
\newtheorem{cor}[theo]{Corollary}
\newtheorem{rem}[theo]{Remark}
\numberwithin{equation}{section}
\newcommand{\EE}{\mathbb{E}}
\newcommand{\PP}{\mathbb{P}}
\newcommand{\RR}{\mathbb{R}}
\newcommand{\Ba}{ {\mathcal B }}
\newcommand{\Na}{ {\mathcal N }}
\newcommand{\Ia}{ {\mathcal I }}
\newcommand{\Ta}{ {\mathcal T}}
\newcommand{\hide}[1]{}
\title{Universal tail profile of Gaussian multiplicative chaos}
\author{Mo Dick Wong\thanks{Statistical Laboratory, University of Cambridge.} \thanks{Fakult\"at f\"ur Mathematik, Universit\"at Wien.}}
\date{\today}
\begin{document}
\maketitle

\abstract{In this article we study the tail probability of the mass of Gaussian multiplicative chaos. With the novel use of a Tauberian argument and Goldie's implicit renewal theorem, we provide a unified approach to general log-correlated Gaussian fields in arbitrary dimension and derive precise first order asymptotics of the tail probability, resolving a conjecture of Rhodes and Vargas. The leading order is described by a universal constant that captures the generic property of Gaussian multiplicative chaos, and may be seen as the analogue of the Liouville unit volume reflection coefficients in higher dimensions.}

\tableofcontents

\section{Introduction}
Gaussian multiplicative chaos (GMC) was first constructed by Kahane \cite{Kah1985} in an attempt to provide a mathematical framework for the Kolmogorov-Obukhov-Mandelbrot model of energy dissipation in turbulence. The theory of (subcritical) GMC consists of defining and studying, for each $\gamma \in (0, \sqrt{2d})$, the random measure
\begin{align}\label{eq:GMC}
M_{\gamma}(dx) = e^{\gamma X(x) - \frac{\gamma^2}{2}\EE[X(x)^2]} dx,
\end{align}

\noindent where $X(\cdot)$ is a (centred) log-correlated Gaussian field on some domain $D \subset \RR^d$. The expression \eqref{eq:GMC} is formal because $X(\cdot)$ is not defined pointwise; instead it is only a random generalised function. It is now, however, well understood that $M_\gamma$ may be defined via a limiting procedure of the form
\begin{align*}
M_{\gamma}(dx) 
= \lim_{\epsilon \to 0} M_{\gamma, \epsilon}(dx)
= \lim_{\epsilon \to 0} e^{\gamma X_\epsilon(x) - \frac{\gamma^2}{2} \EE[X_\epsilon(x)^2]} dx
\end{align*}

\noindent where $X_{\epsilon}(\cdot)$ is some suitable sequence of smooth Gaussian fields that converges to $X(\cdot)$ as $\epsilon \to 0$. We refer the readers to e.g. \cite{Ber2017} for more details about the construction.

In recent years the theory of GMC has attracted a lot of attention in the mathematics and physics communities due to its wide array of applications -- it plays a central role in random planar geometry \cite{DMS2014, DS2011} and the mathematical formulation of Liouville conformal field theory (LCFT) \cite{DKRV2016}, appears as a universal limit in other areas such as random matrix theory \cite{Web2015, BWW2018, LOS2018, NSW2018}, and is even used as a model for Riemann zeta function in probabilistic number theory \cite{SW2016} or stochastic volatility in quantitative finance \cite{DRV2012}.

In spite of the importance of the theory, not much is known about the distributional properties of GMC. For instance, given a bounded open set $A \subset D$, one may ask what the exact distribution of $M_{\gamma}(A)$ is, but nothing is known except in very specific cases where specialised LCFT tools are applicable \cite{KRV2017, Rem2017, RZ2018}. Indeed even the regularity of the distribution (e.g. whether it has a density or not) is not known except for kernels with exact scale invariance \cite{RoV2010}.
\subsection{Main results}\label{subsec:main}
Define $M_{\gamma, g}(dx) = g(x) M_{\gamma}(dx)$ where  $g(x) \ge 0$ is continuous on $\overline{D}$. The goal of this paper is to derive the leading order asymptotics for
\begin{align} \label{eq:tail}
\PP \left(M_{\gamma, g}(A) > t\right)
\end{align}

\noindent for non-trivial\footnote{In the sense that $\int_A g(x) dx > 0$. In particular $A$ has non-trivial Lebesgue measure.} bounded open sets $A \subset D$ as $t \to \infty$. This may be seen as a first step towards the goal of understanding the full distribution of $M_{\gamma, g}(A)$, and will also highlight a new universality phenomenon of GMC. It is a standard fact in the literature that
\begin{align*}
\EE\left[M_{\gamma, g}(A)^p\right] < \infty \quad \Leftrightarrow \quad p < \frac{2d}{\gamma^2}
\end{align*}

\noindent and this suggests the possibility that the right tail \eqref{eq:tail} may satisfy a power law with exponent $2d/\gamma^2$. Our main result confirms this behaviour.

\begin{theo}\label{theo:main}
Let $\gamma \in (0, \sqrt{2d})$, $Q = \frac{\gamma}{2} + \frac{d}{\gamma}$ and $M_{\gamma, g}$ be the subcritical GMC associated with the Gaussian field $X(\cdot)$ with covariance
\begin{align} \label{eq:LGF_cov}
\EE[X(x) X(y)] = -\log |x-y| + f(x, y), \qquad \forall x, y \in D
\end{align}

\noindent where $f$ is a continuous function on $\overline{D} \times \overline{D}$. Suppose $f$ can be decomposed into
\begin{align}\label{eq:f_dec}
f(x, y) = f_+(x, y) - f_-(x, y)
\end{align}

\noindent where  $f_+, f_-$ are covariance kernels for some continuous Gaussian fields on $\overline{D}$. Then there exists some constant $\overline{C}_{\gamma, d} > 0$ independent of $f$ and $g$ such that for any bounded open set $A \subset D$,
\begin{align}\label{eq:main_subcritical}
\PP\left(M_{\gamma, g}(A) > t\right) 
\overset{t \to \infty}{=}
\left(\int_A e^{\frac{2d}{\gamma}(Q - \gamma) f(v, v)} g(v)^{\frac{2d}{\gamma^2}}dv\right)
\frac{\frac{2}{\gamma}(Q-\gamma)}{\frac{2}{\gamma}(Q-\gamma)+1}
\frac{\overline{C}_{\gamma, d}}{t^{\frac{2d}{\gamma^2}}}
+ o(t^{-\frac{2d}{\gamma^2}}).
\end{align}
\end{theo}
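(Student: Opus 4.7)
The plan is to localize the tail problem to small cubes inside $A$, reduce via the decomposition \eqref{eq:f_dec} and Kahane's inequality to an exactly scale invariant canonical log-correlated field, and then obtain the tail of that canonical GMC by applying Goldie's implicit renewal theorem to a perpetuity-type stochastic equation induced by its scale invariance. A Tauberian step transfers the natural renewal-theoretic output back to the raw tail probability.

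\medskip

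Concretely, I would first partition $A$ into a fine grid of cubes $B_i = v_i + \epsilon B_0$ and argue that at the critical exponent $2d/\gamma^2$ only one cube is responsible for the leading tail, since simultaneous largeness of two or more cubes is subcritical by standard $L^p$-estimates combined with Kahane's comparison. For each cube, writing the covariance of $X(v+\epsilon y)$ as $-\log|y-y'| + [-\log\epsilon + f(v,v)] + O(\epsilon)$, one obtains (after Kahane and the decomposition \eqref{eq:f_dec}) the approximate distributional identity
\begin{align*}
M_{\gamma, g}(B_i) \;\approx\; g(v)\, \epsilon^{d}\, e^{\gamma N_v - \frac{\gamma^2}{2}\mathrm{Var}(N_v)}\, \tilde M_\gamma(B_0),
\end{align*}
where $\tilde M_\gamma$ is the canonical GMC on $B_0$ (covariance $-\log|y-y'|$) and $N_v$ is a Gaussian variable, independent of $\tilde M_\gamma$, of variance $-\log\epsilon + f(v,v)$. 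Granting the canonical tail $\PP(\tilde M_\gamma(B_0) > t) \sim \overline{C}_{\gamma,d}\, t^{-2d/\gamma^2}$ and taking the $(2d/\gamma^2)$-th moment of the log-normal prefactor, the $\epsilon$-dependence collapses to a single $\epsilon^d$ and the $f$-dependence collects into $e^{\frac{2d}{\gamma}(Q-\gamma) f(v,v)}$, using the identity $\frac{2d}{\gamma}(Q-\gamma) = \frac{2d^2}{\gamma^2} - d$; summing over $i$ yields precisely the Riemann sum for the integral prefactor in \eqref{eq:main_subcritical}.

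\medskip

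It remains to establish the canonical tail. Using scale invariance of $\tilde X$, a dyadic decomposition of $B_0$ into sub-cubes produces a perpetuity-type distributional identity
\begin{align*}
\tilde M_\gamma(B_0) \;\stackrel{d}{=}\; \sum_k e^{\gamma \Omega_k - \frac{\gamma^2}{2} \EE[\Omega_k^2]}\, \tilde M_\gamma^{(k)},
\end{align*}
with $\tilde M_\gamma^{(k)}$ i.i.d.\ copies of $\tilde M_\gamma(B_0)$ independent of the coarse Gaussian weights $\Omega_k$. At the critical exponent $\alpha = 2d/\gamma^2$ the total weight is mean-one, which is exactly the input Goldie's implicit renewal theorem exploits; it then delivers $\PP(\tilde M_\gamma(B_0) > t) \sim \overline{C}_{\gamma,d}\, t^{-\alpha}$ with an explicit renewal-theoretic formula for the constant. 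The Tauberian step is used to convert Goldie's natural output (a smoothed / truncated moment asymptotic) into the raw tail probability, and simultaneously produces the rational factor $\frac{(2/\gamma)(Q-\gamma)}{(2/\gamma)(Q-\gamma)+1}$ as the Karamata conversion ratio between a truncated $\alpha$-moment and the tail.

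\medskip

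The main obstacle will be twofold: verifying the hypotheses of Goldie's theorem for this non-standard perpetuity (non-arithmeticity of the driving Gaussian, the right integrability at the critical exponent, and Lipschitz/continuity control needed to identify the constant), and showing that $\overline{C}_{\gamma,d}$ so produced is genuinely intrinsic to $(\gamma,d)$ rather than to the particular canonical reference chosen. This intrinsic-ness is the substantive content of the universality claim and must survive the Kahane comparison used to bring arbitrary kernels of the form \eqref{eq:f_dec} down to the canonical model.
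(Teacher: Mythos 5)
Your plan diverges from the paper's proof in several consequential places, and two of its central steps, as stated, fail.

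First, the reduction to the canonical GMC. You propose a fine-cube localisation and then a dyadic-cascade fixed-point equation
$\tilde M_\gamma(B_0) \stackrel{d}{=} \sum_k e^{\gamma\Omega_k-\frac{\gamma^2}{2}\EE[\Omega_k^2]}\tilde M_\gamma^{(k)}$
for the \emph{total mass} of the canonical GMC on a cube, to which you wish to apply Goldie's implicit renewal theorem. But Goldie's theorem (\Cref{theo:Goldie}) applies to equations of the form $R\stackrel{d}{=}\Psi(M,R)$ driven by a single multiplicative factor $M$, not to branching/smoothing transforms $R \stackrel{d}{=} \sum_k M_k R_k$. Handling the latter is precisely what forced Barral--Jin to \emph{generalise} Goldie's theorem in $d=1$ using the exact conical cascade structure, and the paper explicitly avoids that route because it is both non-trivial and does not carry over to general dimension. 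The paper instead studies the localised quantity $\overline{M}_\gamma(0,r)=\int_{|x|\le r}|x|^{-\gamma^2}\overline{M}_\gamma(dx)$, for which exact scaling gives the \emph{single-factor} perpetuity $\overline{M}_\gamma(0,cr)\stackrel{d}{=} c^{d-\gamma^2/2}e^{\gamma N_c}\,\overline{M}_\gamma(0,r)$ and the annulus decomposition $\overline M_\gamma(0,r)=\overline M_\gamma(0,cr)+W$; this is exactly the situation Goldie's original theorem covers (\Cref{lem:ref_GMC}), and it is also what makes the constant manifestly independent of $r$ and $c$ (\Cref{rem:prob_rep}). Your cube construction does not produce this single-factor structure.

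Second, the Kahane step. You assert that ``after Kahane and the decomposition \eqref{eq:f_dec}'' one obtains an approximate distributional identity for $M_{\gamma,g}(B_i)$. Under \eqref{eq:f_dec}, neither kernel dominates the other, so the convexity comparison \eqref{eq:Gcomp} is inapplicable, and even if one kernel did dominate, the quantity you are ultimately comparing, $\PP(M_{\gamma,g}(B_i)>t)$, involves an indicator, which is neither convex nor concave. The paper's remedy is precisely to avoid indicators by passing through the Laplace transform $\EE[M(v,A)^{-1}e^{-\lambda/M(v,A)}]$, where the relevant $F(x)=x^{-1}e^{-\lambda/x}$ is smooth and the \emph{full} interpolation formula \eqref{eq:Kahane_int} (not just the convex comparison) gives an explicit, controllable derivative bound (\Cref{cor:interpolate}, used in \eqref{eq:varphi_diff}). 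There is no analogous control in your direct tail-probability comparison, so this step as stated has a genuine gap.

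Third, the rational factor. You attribute $\frac{(2/\gamma)(Q-\gamma)}{(2/\gamma)(Q-\gamma)+1}$ to a ``Karamata conversion ratio between a truncated $\alpha$-moment and the tail''. This is not where it comes from. Goldie's theorem outputs the raw tail directly; no conversion is needed. In the paper, the factor arises because the Cameron--Martin localisation produces the weight $M_{\gamma,g}(v,A)^{-1}$ inside the expectation (see \eqref{eq:localisation}), and the Laplace-transform estimate \eqref{eq:lap_tail} with $p=1$, $q=\frac{2d}{\gamma^2}-1$ produces exactly $\frac{q}{q+1}$. If you instead define a ``canonical'' constant as the tail coefficient of $\tilde M_\gamma(B_0)$ at exponent $\frac{2d}{\gamma^2}$, that constant would silently absorb the $\frac{q}{q+1}$ factor, but it would then depend on the choice of reference set $B_0$ and the argument that it is intrinsic to $(\gamma,d)$ (the ``universality'' you correctly flag as the substantive content) would need its own proof, which you do not supply.

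In short: the elements you identify (Kahane, Goldie, Tauberian) do all appear in the paper, but assembled in a fundamentally different order and on different objects. The paper's localisation is a change-of-measure (Cameron--Martin) that produces a singular reference quantity $M_{\gamma,g}(v,A)$; your localisation is a discretisation into independent cubes, which neither yields the single-factor perpetuity needed for Goldie nor avoids the indicator obstruction for Kahane.
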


While the decomposition condition \eqref{eq:f_dec} may look intractable at first glance, it is implied by a more convenient criterion regarding higher regularity of $f$ (see \Cref{lem:holder_dec} or \cite{JSW2018} for more details about local Sobolev spaces $H_{loc}^s$). This is satisfied, for instance, by the Liouville quantum gravity measure in dimension $2$, i.e.
\begin{align*}
\mu_{\gamma}^{\mathrm{LQG}}(dx) = R(x; D)^{\frac{\gamma^2}{2}} M_{\gamma}(dx)
\end{align*}

\noindent where $M_{\gamma}(dx)$ is the GMC measure associated with the Gaussian free field with Dirichlet boundary conditions on $\partial D$, in which case $f(x, x) = R(x; D)$ is the conformal radius of $x$ in $D$. Such an application is not covered by any previously known results.

\begin{cor} \label{cor:f_reg}
Assume $f \in H_{\mathrm{loc}}^{s}(D \times D)$ for some $s > d$ instead of the decomposition condition \eqref{eq:f_dec} on $f$. Then the tail asymptotics \eqref{eq:main_subcritical} holds for any bounded open sets $A \subset D$ such that $\overline{A} \subset D$.
\end{cor}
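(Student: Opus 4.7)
The plan is to use the locality of Gaussian multiplicative chaos in order to reduce \Cref{cor:f_reg} to a direct application of \Cref{theo:main} on a slightly smaller domain. First I would fix an open set $U$ with $\overline{A} \subset U$ and $\overline{U} \subset D$; this is possible since $\overline{A}$ is compact and contained in $D$. The local Sobolev hypothesis $f \in H^s_{\mathrm{loc}}(D \times D)$ with $s > d$ gives $f \in H^s(\overline{U} \times \overline{U})$, and \Cref{lem:holder_dec} then produces a decomposition $f|_{\overline{U} \times \overline{U}} = f_+ - f_-$ in which $f_\pm$ are covariance kernels of continuous Gaussian fields on $\overline{U}$; that is, condition \eqref{eq:f_dec} holds on the smaller domain $\overline{U}$.

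With this in hand, I apply \Cref{theo:main} to the log-correlated field $X^U := X|_U$, whose covariance $-\log|x-y| + f(x,y)$ on $U \times U$ now satisfies all the hypotheses of the theorem (with $D$ replaced by $U$). The associated GMC measure $M^U_{\gamma, g}$ therefore obeys the tail asymptotics \eqref{eq:main_subcritical} when evaluated on $A$. To conclude I invoke the locality of GMC: any admissible regularization $X_\epsilon(x) = (X * \eta_\epsilon)(x)$ with mollifier $\eta_\epsilon$ supported in a ball of radius $\epsilon$ satisfies $X_\epsilon(x) = X^U_\epsilon(x)$ for all $x \in \overline{A}$ and all $\epsilon < \dist(\overline{A}, \partial U)$, so that $M_{\gamma, \epsilon, g}(A) = M^U_{\gamma, \epsilon, g}(A)$ almost surely for all small $\epsilon$; passing to the limit yields $M_{\gamma, g}(A) = M^U_{\gamma, g}(A)$ almost surely, and \eqref{eq:main_subcritical} transfers.

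The genuine content of the corollary is hidden in \Cref{lem:holder_dec}: writing a continuous symmetric $f$ on $\overline{U} \times \overline{U}$ as a difference of two continuous positive definite kernels is a non-trivial spectral/functional-analytic statement, and this is where the Sobolev regularity $s > d$ (on a product domain of dimension $2d$) enters, presumably through decay of the eigenvalues of the associated compact self-adjoint operator $T_f$ on $L^2(\overline{U})$ together with a Mercer-type uniform convergence argument for the positive and negative spectral parts. Granting that lemma, the hard work has already been done in \Cref{theo:main} and the remainder of the corollary is essentially bookkeeping through the locality of GMC.
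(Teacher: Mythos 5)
Your proposal is correct and follows the same approach as the paper's own proof: shrink to an intermediate open set $A'$ with $\overline{A} \subset A' \subset \overline{A'} \subset D$, invoke \Cref{lem:holder_dec} to obtain the decomposition \eqref{eq:f_dec} on that smaller domain, and then apply \Cref{theo:main}. The only difference is that you explicitly justify the (routine) locality step — that $M_{\gamma,g}(A)$ is unchanged when $X$ is restricted to the smaller domain — which the paper leaves implicit.
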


\begin{proof}
Since we can always find another open set $A'$ such that $\overline{A} \subset A' \subset \overline{A'} \subset D$,  the decomposition condition on $f$, when restricted to $\overline{A}$, holds by \Cref{lem:holder_dec} and \Cref{theo:main} applies immediately.
\end{proof}

The constant $\overline{C}_{\gamma, d}$ that appears in the tail asymptotics \eqref{eq:main_subcritical} has various probabilistic representations which are summarised in \Cref{cor:prob_rep}, and we shall call it the reflection coefficient of Gaussian multiplicative chaos\footnote{evaluated at $\gamma$; see the general definition of $\overline{C}_{\gamma, d}(\alpha)$ in \Cref{app:prob_rep}.} as it may be seen as the $d$-dimensional analogue of the reflection coefficient in Liouville conformal field theory (LCFT), see \Cref{app:prob_rep}. Based on existing exact integrability results, we can even provide an explicit expression for $\overline{C}_{\gamma, d}$ when $d=1$ and $d=2$.

\begin{cor}[{cf. \cite[Section 4]{RV2017}}]
The constant $\overline{C}_{\gamma, d}$ in \eqref{eq:main_subcritical} is given by
\begin{align}\label{eq:coefficient}
\overline{C}_{\gamma, d}
= \begin{dcases}
\frac{(2\pi)^{\frac{2}{\gamma}(Q-\gamma)}}{\frac{\gamma}{2}(Q - \gamma) \Gamma\left(\frac{\gamma}{2}(Q - \gamma)\right)^{\frac{2}{\gamma^2}}}, & d = 1, \\
- \frac{\left(\pi \Gamma(\frac{\gamma^2}{4})/\Gamma(1-\frac{\gamma^2}{4})\right)^{\frac{2}{\gamma}(Q - \gamma)}}{\frac{2}{\gamma}(Q- \gamma)} \frac{\Gamma(-\frac{\gamma}{2}(Q-\gamma))}{\Gamma(\frac{\gamma}{2}(Q-\gamma))\Gamma(\frac{2}{\gamma}(Q-\gamma))}, & d= 2.
\end{dcases}
\end{align}
\end{cor}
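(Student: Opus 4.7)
The strategy is to exploit the universality asserted by Theorem~\ref{theo:main}: the constant $\overline{C}_{\gamma, d}$ depends on neither the kernel $f$ nor on the weight $g$. Hence it suffices to evaluate both sides of \eqref{eq:main_subcritical} in any single example where the distribution of the total GMC mass is accessible in closed form, and then solve for $\overline{C}_{\gamma, d}$ algebraically.

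For $d = 1$ the relevant exact input is the Fyodorov--Bouchaud formula (rigorously proved by Remy), which gives the law of $M_\gamma(S^1)$ for the GMC associated with the log-correlated Gaussian field on the unit circle, expressed via an inverse moment of a Gamma random variable. A Mellin / residue computation extracts the right tail of this explicit law as a power law $c_1 t^{-2/\gamma^2}$ with a $\Gamma$-function coefficient $c_1$. On the other hand, Theorem~\ref{theo:main} applied with $A = S^1$ and $g \equiv 1$ predicts the same tail in terms of $\overline{C}_{\gamma, 1}$; since $f(v, v)$ is constant by rotational invariance of the circular kernel, the prefactor integral $\int_{S^1} e^{\frac{2}{\gamma}(Q - \gamma) f(v, v)} dv$ is elementary. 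Equating the two expressions and rearranging produces the first line of \eqref{eq:coefficient}.

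For $d = 2$ the same scheme is implemented on a 2D example where the distribution of the total mass is available via Liouville CFT techniques --- for instance the GMC of the 2D GFF on the unit disk, whose moment formula follows from the DOZZ-type computations of Kupiainen--Rhodes--Vargas and Remy--Zhu. The tail of this law is again extracted by locating the leading simple pole of the moment Mellin transform at $p = 2d/\gamma^2 = 4/\gamma^2$ and reading off the residue. Matching against \eqref{eq:main_subcritical}, with the integral involving $f(v, v) = \log R(v; \DD)$, and solving for $\overline{C}_{\gamma, 2}$ delivers the second line of \eqref{eq:coefficient}.

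The only non-trivial step is the tail extraction from the exact law, which amounts to a routine residue computation on a product of $\Gamma$-functions. This calculation is essentially the one performed in Section~4 of \cite{RV2017}; the novelty here is that it now rigorously determines the universal prefactor in \eqref{eq:main_subcritical} thanks to Theorem~\ref{theo:main}. No further analytic input is needed: once the tail coefficient from the exact formula and the prefactor integral from \eqref{eq:main_subcritical} are both written out, solving for $\overline{C}_{\gamma, d}$ is algebra.
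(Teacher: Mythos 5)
Your $d=1$ route is exactly the paper's: invoke the Fyodorov--Bouchaud formula (proved by Remy), which expresses the law of $M_\gamma(S^1)$ as an explicit inverse power of a Gamma variable, extract the right tail, and match against \eqref{eq:main_subcritical} applied to the circle with $g\equiv 1$ and constant $f(v,v)$; universality of $\overline C_{\gamma,1}$ then yields the first line of \eqref{eq:coefficient}.

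Your $d=2$ route has a genuine gap. There is no known closed-form distribution or Mellin transform for the total GMC mass of a 2D GFF on the disk (or any 2D domain) --- the paper itself emphasises in the introduction that ``nothing is known except in very specific cases where specialised LCFT tools are applicable,'' and Remy--Zhu concerns the \emph{one-dimensional} unit interval, not a planar domain, while the KRV DOZZ formula gives three-point correlation functions rather than the full law of $M_\gamma(\DD)$. Consequently the proposed ``routine residue computation on a product of $\Gamma$-functions'' cannot be carried out because the requisite exact Mellin formula does not exist in $d=2$. The paper instead proceeds more directly: RV2017 already proves the tail asymptotic \eqref{eq:main_subcritical} for $f\equiv 0$, $g\equiv 1$ in $d=2$, with the leading coefficient identified as the unit volume Liouville reflection coefficient $\overline R(\gamma)$, and the explicit value of $\overline R(\gamma)$ is supplied separately by the DOZZ/KRV integrability results. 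By the $f$-independence of $\overline C_{\gamma,2}$ established in \Cref{theo:main}, one may simply set $\overline C_{\gamma,2}=\overline R(\gamma)$ and read off the second line of \eqref{eq:coefficient}; no tail extraction from an exact law is required (nor is one available).
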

\begin{proof}
The $d=2$ case follows from \cite{RV2017} which proves \eqref{eq:main_subcritical} when $f \equiv 0$ and $g \equiv 1$. By \Cref{theo:main}, our constant $\overline{C}_{\gamma, d}$ is independent of $f$ and therefore coincides with the Liouville unit volume reflection coefficient evaluated at $\gamma$, the value of which is given by the formula in \eqref{eq:coefficient}.

For $d=1$, this follows from \cite{Rem2017} which verifies the Fyodorov-Bouchard formula \cite{FB2008} that gives the exact distribution of the total mass of the GMC (associated with Gaussian free field with vanishing average over the unit circle) on the circle.
\end{proof}

\subsection{Previous work and our approach}
Despite being a very fundamental question, the tail probability of GMC has not been investigated very much in the literature. To our knowledge, the first result in this direction is established by Barral and Jin \cite{BJ2014} for the GMC associated with the exact scale invariant kernel $\EE[X(x) X(y)] = -\log|x-y|$ on the unit interval $[0, 1]$:
\begin{align*}
\PP(M_\gamma([0,1]) > t) = \frac{C_*}{t^{\frac{2}{\gamma^2}}} + o(t^{-\frac{2}{\gamma^2}})
\end{align*}

\noindent where the constant $C_* > 0$ is given by
\begin{align}\label{eq:Cstar}
C_* = \frac{2\gamma^2}{2 - \gamma^2} \frac{\EE \left[ M_{\gamma}([0,1])^{\frac{2}{\gamma^2}-1} M_\gamma([0, \frac{1}{2}]) - M_\gamma([0, \frac{1}{2}])^{\frac{2}{\gamma^2}}\right]}{\log 2}.
\end{align}

\noindent The issue about their approach is that they rely heavily on the exact scale invariance of the kernel and the symmetry of the unit interval in order to derive a stochastic fixed point equation, and it is not clear how their method may be generalised.

A recent paper \cite{RV2017} by Rhodes and Vargas, who consider the whole-plane Gaussian free field (GFF) restricted to the unit disc (i.e. $\EE[X(x) X(y)] = -\log|x-y|$ on $D = \{x \in \RR^2: |x| < 1\}$), offers a new perspective for the tail problem. Their starting point is the localisation trick
\begin{align*}
\PP\left(M_{\gamma, g}(A) > t\right) = \int_A \EE \left[ \frac{1_{\{M_{\gamma, g}(v, A) > t\}}}{M_{\gamma, g}(v, A)}\right]g(v)dv, 
\quad M_{\gamma, g}(v, A) := \int_{A} \frac{e^{\gamma^2 f(x, v)} M_{\gamma, g}(dx)}{|x-v|^{\gamma^2}}
\end{align*}

\noindent which effectively pins down the $\gamma$-thick points of $X(\cdot)$, allowing one to express the dependence of the leading tail coefficient on the test set $A$ in a very explicit way, and in the end they are able to obtain \eqref{eq:main_subcritical} when $f$ only consists of the positive definite part, i.e. $f_-(x, y) \equiv 0$, in dimension $d \le 2$.

Our strategy is inspired by the ideas from the aforementioned works, but we have made several important changes as well as new input. Let us defer the details of our proof to \Cref{sec:proof} and just highlight the difference between our approach and previous attempts here.

\begin{itemize}
\item[(i)] The use of Tauberian theorem: we translate the problem of the asymptotics for $\PP(M_{\gamma, g}(A)>t)$ as $t \to \infty$ to the equivalent problem of the asymptotics of 
\begin{align*}
\EE\left[e^{-\lambda / M_{\gamma, g}(A)}\right]
= \int_A\EE \left[\frac{1}{M_{\gamma, g}(v, A)} e^{-\lambda /M_{\gamma, g}(v, A)}\right] g(v) dv
\end{align*}

\noindent as $\lambda \to \infty$ (here the equality comes from a similar localisation trick). Unlike the approach in \cite{RV2017}, the expectation we deal with does not involve any indicator functions, which makes our analysis (such as the ``removal of non-singularity" step) much simpler.

\item[(ii)] Gaussian interpolation: thanks to the absence of any indicator functions in
\begin{align}\label{eq:non_con}
\EE \left[\frac{1}{M_{\gamma, g}(v, A)} e^{-\lambda /M_{\gamma, g}(v, A)}\right],
\end{align}

\noindent there is hope to reduce our problem to the case where the underlying kernel is exact (i.e. $\EE[X(x) X(y)] = -\log |x-y|$). Unlike many estimates such as moment bounds in GMC, the expectation \eqref{eq:non_con} we are studying here concerns a function $F: x \mapsto x^{-1} e^{-\lambda x}$ which is not convex or concave. The lack of a convenient convex/concave modification of $F$ without affecting the behaviour of the expectation as $\lambda \to \infty$ means that the popular convexity inequality \eqref{eq:Gcomp} is not applicable, and Kahane's full interpolation formula \eqref{eq:Kahane_int} plays an indispensable role in our analysis.

\item[(iii)] The analysis of the exact kernel: without the localisation trick, \cite{BJ2014} has to proceed by generalising Goldie's implicit renewal theorem to a form that is applicable to $M_{\gamma}([0,1])$, and they also need to show that the constant $C_*$ in \eqref{eq:Cstar} is finite, the proof of which is not trivial. In contrast, we only need the precise asymptotics for the tail probability
\begin{align}\label{eq:need_bound}
\PP\left(\int_{|x| \le r} |x|^{-\gamma^2}M_{\gamma}(dx) > t \right)
\end{align}

\noindent which follows readily from Goldie's original result and a simple coupling argument.
\end{itemize}

The novel elements in our proof not only allow us to bypass many tedious computations in existing approaches, but also extend the tail result \eqref{eq:main_subcritical} in three directions, namely
\begin{itemize}
\item general open test sets $A$: our result holds for any open subsets $A$ without further regularity assumption, unlike \cite{RV2017} which requires a $C^1$-boundary due to intricacies in dealing with the indicator function;

\item general kernels \eqref{eq:LGF_cov}: the continuity argument in \cite{RV2017} may treat the case where $f(x,y)$ is positive definite in $d=2$ but completely breaks down as soon as the negative definite part $f_-(x, y)$ is non-trivial, whereas we circumvent this issue entirely by an extrapolation principle;

\item arbitrary dimension $d$: our method does not make use of any special decomposition of the log-kernel $-\log |x-y|$, unlike \cite{BJ2014} (which requires the cone construction in $d=1$) or \cite{RV2017} (which relies on a radial/lateral decomposition of GFF in $d=2$), and this allows a unified approach to all dimensions.
\end{itemize}

\Cref{theo:main} shares the same spirit of the result in \cite{RV2017} in the sense that we have successfully separated the dependence on the test set $A$ and the functions $f, g$ from the rest of the tail coefficient, and the constant $\overline{C}_{\gamma, d}$ captures any remaining dependence on $d$ and $\gamma$ and generic feature of GMC. The fact that we are unable to provide an explicit formula for $\overline{C}_{\gamma, d}$ for $d \ge 3$ should not be seen as a drawback of our approach -- explicit expressions are known for $d=1$ and $d=2$ only because the constant has an LCFT interpretation, and their formulae are found (independently of the study of tail probability) by LCFT tools which do not seem to have natural generalisation to higher dimension at the moment.

\subsection{On the relevance of the kernel decomposition}
Based on the continuity assumption of $f$, it is always possible to decompose $f$ into the difference of two positive definite functions: indeed
\begin{align*}
T_f: h(\cdot) \mapsto \int_D f(\cdot, y) h(y) dy
\end{align*}

\noindent is a symmetric Hilbert-Schmidt operator that maps $L^2(D)$ to $L^2(D)$ and by the standard spectral theory of compact self-adjoint operators there exist $\lambda_n \in \RR$ and $\phi_n \in L^2(D)$ such that $(T_f\phi_n)(x) = \lambda_n \phi_n(x)$, $|\lambda_n| \xrightarrow{n \to \infty}{0}$ and
\begin{align*}
f(x, y) &= \sum_{n=1}^\infty \lambda_n \phi_n(x) \phi_n(y)\\
&= \underbrace{\left(\sum_{n=1}^\infty |\lambda_n| \phi_n(x) \phi_n(y)1_{\{\lambda_n > 0\}} \right)}_{=:f_+(x, y)} 
- \underbrace{\left(\sum_{n=1}^\infty |\lambda_n| \phi_n(x) \phi_n(y)1_{\{\lambda_n < 0\}} \right)}_{=:f_-(x, y)}
\end{align*}

\noindent in $L^2(D)$.Therefore, the relevant question is to determine the least regularity on $f_\pm$ for the power-law profile \eqref{eq:main_subcritical} to hold. Our decomposition condition \eqref{eq:f_dec} requires $f_\pm$ to be kernels of some continuous Gaussian fields. As it turns out, we only use this technical assumption to obtain the following estimate (see for instance \Cref{cor:tail_bound}(ii)): 
\begin{itemize}
\item There exists some $r > 0$ and $C > 0$ such that for all $v \in D$ and $s \in [0,1]$

\begin{align}\label{eq:tb_rel}
\PP \left( \int_{B(v, r) \cap D} \frac{M_{\gamma}^s(dx)}{|x-v|^{\gamma^2}} > t\right) \le \frac{C}{t^{\frac{2d}{\gamma^2}-1}} \qquad \forall t > 0
\end{align}

where $M_{\gamma}^s(dx) = e^{\gamma Z_s(x) - \frac{\gamma^2}{2} \EE\left[Z_s(x)^2\right]}dx$ is the Gaussian multiplicative chaos associated with the log-correlated field $Z_s$ with covariance $\EE[Z_s(x) Z_s(y)] = -\log |x-y| + s f(x, y)$.
\end{itemize}

\noindent Inspecting the proof in \Cref{sec:proof}, this is the only assumption (other than the continuity of $f$) we need in order to apply dominated convergence in several places (such as \eqref{eq:varphi_diff}) which ultimately yields the desired power law. In other words our decomposition condition \eqref{eq:f_dec} may be relaxed so long as \eqref{eq:tb_rel} is satisfied, e.g. we may assume instead that
\begin{itemize}
\item The Gaussian fields $G_{\pm}$ associated with the kernels $f_\pm$ satisfy
\begin{align}\label{eq:GP_bound}
\PP \left( \sup_{x \in D} |G_{\pm}(x)| < \infty \right) > 0
\end{align}

\noindent (see \Cref{subsec:GPfacts} for various implications).
\end{itemize}

\noindent All the proofs in \Cref{sec:proof} will go through without any modification to cover this slightly more general setting (which obviously includes the case where $G_\pm$ are continuous on $\overline{D}$). We choose not to phrase \Cref{theo:main} this way because \eqref{eq:GP_bound} is less tractable and not necessarily much more general. Indeed when $f_\pm(x, y) = f_\pm(x-y)$ are continuous translation-invariant kernels, a classical result by Belyaev \cite{Bel1960} states that $G_\pm$ are either continuous or unbounded on any non-empty open sets\footnote{The theorem of Belyaev actually concerns stationary kernels in $d=1$, but this implies the statement in higher dimension because we may view $G_\pm$, with $d-1$ coordinates fixed, as Gaussian fields in $1$ dimension.}, and so \eqref{eq:GP_bound} is equivalent to the original condition \eqref{eq:f_dec} in the stationary setting. We also think that the decomposition condition \eqref{eq:f_dec} is a very natural assumption because for any $s \ge 0$, $\epsilon > 0$ and symmetric function $f(\cdot, \cdot) \in H^s(\RR^{2d})$, one can always find some symmetric function $\widetilde{f}(\cdot, \cdot) \in C_c^\infty(\RR^{2d})$, say by truncating suitable basis expansion (see also \cite[Lemma 2.2]{JSW2018}), such that $||f - \widetilde{f}||_{H^s(\RR^{2d})} < \epsilon$ and that the operator $T_{\widetilde{f}}$ is of finite rank, i.e. the decomposition condition \eqref{eq:f_dec} is satisfied by a ``dense collection" of covariance kernels of the form \eqref{eq:LGF_cov}.

To understand the importance of continuity at the level of the fields $G_\pm$, let us consider the simpler situation where $f = f_+$. We have
\begin{align*}
\EE\left[X(x) X(y)\right] = -\log |x-y| + f(x, y) \approx - \log |x-y| + f(v,v)
\end{align*}

\noindent on a ball of small radius $r > 0$ centred around $v \in A$. This says that $X(\cdot)$ is the sum of an exact scale invariant field $Y$ (with covariance $\EE[Y(x) Y(y)] = K(x, y) = -\log |x-y|$) and an independent field $G_+$ which locally behaves like an independent random variable $N_v \sim \Na(0, f(v, v))$, and this leads to
\begin{align}
\notag & \PP\bigg( \underbrace{\int_A \frac{e^{\gamma^2f(x, v)}M_{\gamma, g}(dx) }{|x-v|^{\gamma^2}}}_{=:M_{\gamma, g} (v, A)} > t\bigg)\\
\label{eq:tail_loc}
& \qquad \approx \PP\bigg( \underbrace{\int_{|x-v|\le r} \frac{e^{\gamma^2f(x, v)}M_{\gamma, g}(dx) }{|x-v|^{\gamma^2}}}_{=:M_{\gamma, g}(v, r)} > t\bigg)
\sim e^{\frac{2d}{\gamma}(Q-\gamma) f(v,v)} g(v)^{\frac{2d}{\gamma^2}-1} \frac{\overline{C}_{\gamma, d}}{t^{\frac{2d}{\gamma^2}-1}}
\end{align}

\noindent (see \Cref{cor:tail_bound} and \Cref{rem:cor_important}). This allows us to interpret
\begin{align*}
\PP\left(M_{\gamma, g}(A) > t\right) 
\sim
\left(\int_A e^{\frac{2d}{\gamma}(Q - \gamma) f(v, v)} g(v)^{\frac{2d}{\gamma^2}}dv\right)
\frac{\frac{2}{\gamma}(Q-\gamma)}{\frac{2}{\gamma}(Q-\gamma)+1}
\frac{\overline{C}_{\gamma, d}}{t^{\frac{2d}{\gamma^2}}}
\end{align*}

\noindent in the following way: if $M_{\gamma, g}(A)$ is extremely large, then most of its mass comes from a small neighbourhood $B(v, r) \subset A$ of some $\gamma$-thick point $v \in A$ of $X(\cdot)$, and this point $v$ is more likely to come from regions of higher density with respect to $g$ and/or of higher values of $f $, i.e. where $G_+$ has higher variance near $v$.

When $G_+$ is not continuous, the localisation intuition is not valid anymore and our method breaks down because \eqref{eq:GP_bound} is possibly false by Belyaev's dichotomy mentioned earlier. It may happen that \eqref{eq:tb_rel} is still valid, in which case the power-law profile will still hold, but it is unclear how to proceed with a Gaussian field $G_+$ that is only guaranteed to have a separable and measurable version but nothing else. We conjecture that the power law \eqref{eq:main_subcritical} remains true without the generalised decomposition condition \eqref{eq:GP_bound} based on two heuristics:
\begin{itemize}
\item Despite the possibility that $G_\pm$ are unbounded in every non-empty open set, $G_\pm$ are still measurable and Lusin's theorem suggests some ``approximate" continuity of the fields which is much weaker than the usual notion of continuity but is perhaps sufficient for studying integrals.
\item The construction of the GMC measure involves the mollification of the underlying log-correlated field. When $G_\pm$ are convolved with a smooth mollifier $\theta \in C_c^\infty(\RR^d)$, the new covariance kernels are differentiable which implies that the resulting fields are actually continuous.
\end{itemize}

\subsection{Critical GMCs and extremal processes: heuristics}
Let us abuse the notation and denote by $M_{\sqrt{2d}}$ the critical GMC (via Seneta--Heyde renormalisation\footnote{Our definition differs from the usual one by the factor $\sqrt{\pi/2}$ for aesthetic purpose.})
\begin{align*}
M_{\sqrt{2d}}(dx) = \lim_{\epsilon \to 0^+} \sqrt{\frac{\pi}{2}}\left(\EE[X_\epsilon(x)^2]\right)^{\frac{1}{2}} e^{\sqrt{2d}X_{\epsilon}(x) - d\EE\left[X_{\epsilon}(x)^2\right]}dx
\end{align*}

\noindent and similarly $M_{\sqrt{2d}, g}(dx) = g(x)M_{\sqrt{2d}}(dx)$. While a similar criterion for the existence of moments \cite{DRSV2014b}
\begin{align*}
\EE \left[M_{\sqrt{2d}, g}(A)^p \right] < \infty \qquad \Leftrightarrow \qquad p < 1
\end{align*}

\noindent has been known for critical GMC associated with general fields, previous attempts to understand the tail probability $\PP(M_{\sqrt{2d}, g}(A) > t)$ are again restricted to exact kernels so that the derivation via stochastic fixed point equation may be applied \cite{BKNSW2015}. By combining thetechniques in this paper with additional ingredients including fusion estimates of GMC that have appeared in \cite{DKRV2017, BW2018} , it should be possible to prove that

\begin{align}\label{eq:main_critical}
\PP \left(M_{\sqrt{2d}, g}(A) > t\right)
\overset{t \to \infty}{=}\frac{\int_A g(v) dv}{t \sqrt{2d}} + o(t^{-1}).
\end{align}

\noindent The analysis of the critical tail probability is not an entirely straightforward adaptation of the proof here, however, and we decide to present the critical result in a separate article \cite{Working} in order not to overload the current paper. Nevertheless, let us provide a heuristic proof of \eqref{eq:main_critical} in the case $d=2$ based on \Cref{theo:main}. Recall that for $\gamma \in (0, 2)$ we have
\begin{align*}
\overline{C}_{\gamma, 2} = - \frac{\pi^{\frac{4}{\gamma^2}-1} \left(\Gamma(\frac{\gamma^2}{4})/\Gamma(1-\frac{\gamma^2}{4})\right)^{\frac{4}{\gamma^2}-1}}{\frac{4}{\gamma^2}-1} \frac{\Gamma(\frac{\gamma^2}{4}-1)}{\Gamma(1 - \frac{\gamma^2}{4})\Gamma(\frac{4}{\gamma^2}-1)}.
\end{align*}

\noindent Using the property\footnote{This was first proved in $d=2$, for GFF with Dirichlet boundary conditions in \cite{APS2018}, and subsequently extended in \cite{JSW2018} to log-correlated fields \eqref{eq:LGF_cov} with $f \in H_{\mathrm{loc}}^{d + \epsilon}$ in dimension $d = 2$.} that
\begin{align*}
\frac{M_{\gamma}(dx)}{2 - \gamma} \xrightarrow{\gamma \to 2^-} 2M_{2}(dx)
\end{align*}

\noindent and that $\Gamma(x) = x^{-1} \Gamma(1+x) \overset{x \to 0}{\sim} x^{-1}$, we should expect
\begin{align*}
\PP\left(M_{2, g}(A) > t\right)
& \overset{\gamma \to 2^-}{\approx} \PP\left(M_{\gamma, g}(A) > (2 - \gamma) 2t\right)\\
& \overset{\gamma \to 2^-}{\sim} \left(\frac{4}{\gamma^2} - 1\right) \left(\frac{1 - \frac{\gamma^2}{4}}{\frac{\gamma^2}{4}}\right)^{\frac{4}{\gamma^2} - 1} \frac{\int_A g(v) dv}{((2-\gamma) \cdot 2t)^{\frac{4}{\gamma^2}}}
\overset{\gamma \to 2^-}{\sim} \frac{\int_A g(v) dv}{2t}.
\end{align*}

Unfortunately it seems impossible to justify the interchanging of the limits $\gamma \to 2^-$ and $t \to \infty$ to turn the above argument into a rigorous proof, and this is actually not the approach adopted in the separate paper. On the other hand, the constant $\overline{C}_{\gamma, d}$ is not explicitly known in higher dimension $d \ge 3$ but the heuristic here suggests the existence of a non-trivial limit:
\begin{align*}
\lim_{\gamma \to \sqrt{2d}^-} (\sqrt{2d} - \gamma)^{\frac{2d}{\gamma^2}} \overline{C}_{\gamma, d}
= \lim_{\gamma \to \sqrt{2d}^-} (\sqrt{2d} - \gamma) \overline{C}_{\gamma, d}
 \in (0, \infty).
\end{align*}

\paragraph{Connection to discrete Gaussian free field}

The tail probability of critical chaos is not only interesting in its own right but is also closely related to the study of extrema of log-correlated Gaussian fields, which has been an active area of research in the last two decades. For instance, it is known that the extremal process of a discrete Gaussian free field (DGFF) in $d=2$ converges to a Poisson point process with intensity $e^{-2x} \otimes Z(dx)$ for some random measure $Z(dx)$ \cite{BL2014, BL2016, BL2018} which has long been conjectured to be some constant multiple of the critical LQG measure $\mu_2^{\mathrm{LQG}}$, i.e.
\begin{align}\label{eq:conformal_rad}
Z(dx) \propto \mu_2^{\mathrm{LQG}}(dx)
= R(x; D)^2 M_2(dx), \qquad x \in D
\end{align}

\noindent where $M_2(dx)$ is the critical GMC associated with Gaussian free field with Dirichlet boundary condition. The random measure $Z(dx)$ is characterised (up to a deterministic multiplicative factor) by a set of properties, among which the Laplace-type estimate
\begin{align} \label{eq:laplace}
\lim_{\lambda \to 0^+} \frac{\EE\left[ Z(A) e^{-\lambda Z(A)}\right]}{-\log \lambda} = c \int_A R(x; D)^2 dx,
\end{align}

\noindent (where $c>0$ is independent of $A$) has been left unverified by $\mu_2^{\mathrm{LQG}}$ for several years until very recently in the revision of \cite{BL2014}. Here we suggest an approach slightly different from that in \cite{BL2014}: it is sufficient to first establish the statement that
\begin{align} \label{eq:LQG_tail}
\PP \left(\mu_2^{\mathrm{LQG}}(A) > t\right) \overset{t \to \infty}{=} \frac{c \int_A R(x; D)^2dx}{t} + o(t^{-1})
\end{align}

\noindent from which we conclude that the Laplace-type estimate holds by straightforward computation. We would like to point out that \eqref{eq:LQG_tail} is a strictly stronger statement and cannot be deduced from the estimate \eqref{eq:laplace} without additional assumption\footnote{The claim that \eqref{eq:laplace} implies \eqref{eq:LQG_tail} was incorrectly made in \cite{BL2014}.}.

\hide{
\subsection{On the critical case in Karamata's Tauberian theory}\label{subsec:Kara_critical}
The second version of \cite{BL2014} claims to have obtained the tail probability \eqref{eq:LQG_tail} as an easy consequence of \eqref{eq:laplace} through the use of Tauberian theorem (cf. \cite[Corollary 2.10]{BL2014}). This would have relied on a result of the following form\footnote{\cite{BL2014} only requires $q=1$, but if such claim were true for $q=1$  it would be true for any $q >0$ by a simple reduction argument.}: for a non-negative random variable $U$ and $q > 0$
\begin{align}\label{eq:tau_wrong}
\PP \left( U > t \right) = \frac{C}{t^q} + o(t^{-1}) \qquad \Leftrightarrow \qquad 
\lim_{\lambda \to 0^+} \frac{\EE \left[U^q e^{-\lambda U}\right]}{-\log \lambda} = Cq.
\end{align}

\noindent While the forward implication of \eqref{eq:tau_wrong} can be verified by straightforward computation (\Cref{lem:fake_tau}), the backward implication (which is the direction of interest in \cite{BL2014}) is, unfortunately, false in general, as seen by the simple counter-example $q=1$ and $\PP(U > t) = (1+ 0.1\sin(\log t)) / t$ for $t \ge 1$. To understand what the backward implication is really suggesting, first recall that
\begin{align*}
\EE\left[U e^{-\lambda U}\right] \overset{\lambda \to 0^+}{\sim} -C\log \lambda
\qquad \Leftrightarrow
\qquad \EE \left[U 1_{\{U \le t\}}\right] \overset{t \to \infty}{\sim} C \log t
\end{align*}

\noindent by standard Tauberian theorem, and in the notation of \Cref{theo:tau} we are in the critical case of Karamata's Tauberian theory where $\rho = 0$ and $L(x) = \log x$. Since $\EE\left[ U 1_{\{U \le t \}}\right] = -t \PP(U > t) + \int_0^t \PP(U > s)ds$ by Fubini, if we can ignore the negative term (which would be subleading anyway if $\PP(U > t)$ were supposed to be $o(t^{-1} \log t)$) then we have
\begin{align} \label{eq:can_diff}
\int_0^t \PP(U>s)ds \overset{t \to \infty}{\sim} C \log t
\end{align}

\noindent The backward implication of \eqref{eq:tau_wrong} is thus, to some extent, equivalent to the question of whether we can ``differentiate" the above asymptotics and obtain $\PP(U > t) \sim C / t$, and the same counter-example we mentioned just now provides a negative answer to this. Indeed, even under \eqref{eq:can_diff}, it is still not possible to prove the existence of some $C' > 0$ such that for all $t > 0$ sufficiently large
\begin{align*}
\PP(U > t) \le \frac{C}{t}
\end{align*}

\noindent or an analogous lower bound -- whether $U$ has a density function or not, one can always construct counter-examples such that these bounds are not satisfied.

The necessary and sufficient conditions for the backward implication are related to the notion of de Haan class from the higher-order theory of regular variation (see \cite{Haa1976} or e.g. \cite[Chapter 3]{BGT1989}), which requires better control over subleading order terms in $\EE\left[U^qe^{-\lambda U}\right]$ as $\lambda \to 0^+$. Such control is unavailable with the method in \cite{BL2014}, and this explains why the asymptotics of the tail probability of subcritical/critical GMC is more subtle than that of the corresponding Laplace-type estimate.

Note, however, that once we prove an asymptotic power law for a random variable $U$, we can rely on the forward implication of \eqref{eq:tau_wrong} to study the leading order coefficient $C$ in the asymptotics. For our purpose, this provides an alternative probabilistic representation of $\overline{C}_{\gamma, d}$ (see \Cref{cor:prob_rep}) which may be more useful in $d \ge 3$ for the derivation of an explicit formula in the future.
}

\subsection{Outline of the paper} 
The remainder of the article is organised as follows.

In \Cref{sec:prelim} we compile a list of results that will be used in the proof of \Cref{theo:main}. This includes a collection of facts regarding separable Gaussian processes, log-correlated Gaussian fields and GMCs, Karamata's Tauberian theorem and auxiliary asymptotics, and random recursive equations.

In \Cref{sec:proof} we present the proof of \Cref{theo:main} which is divided into two parts. After sketching the idea of the localisation trick, we first establish the tail asymptotics for GMCs associated with exact kernels. We then apply Kahane's interpolation and extend the result to general kernels \eqref{eq:LGF_cov}.

We conclude the article with \Cref{app:prob_rep} where we define the reflection coefficient $\overline{C}_{\gamma, d}(\alpha)$ of Gaussian multiplicative chaos and prove that it is equivalent to the Liouville reflection coefficients in $d=2$.

\paragraph{Acknowledgement}
The author would like to thank R\'emi Rhodes and Vincent Vargas for suggesting the problem, and Nathana\"el Berestycki for useful discussions. The author is supported by the Croucher Foundation Scholarship and EPSRC grant EP/L016516/1 for his PhD study at Cambridge Centre for Analysis.

\section{Preliminaries} \label{sec:prelim}

\subsection{Basic facts of Gaussian processes} \label{subsec:GPfacts}
We collect a few standard results regarding Gaussian processes in the following theorem.

\begin{theo}\label{theo:GPfacts}
Let $(G_t)_{t \in \Ta}$ be a separable centred Gaussian process such that
\begin{align*}
\PP\left(\sup_{t \in \Ta} |G_t| < \infty\right) > 0.
\end{align*}

\noindent Then the following statements are true.
\begin{itemize}
\item Zero-one law: $\PP\left(\sup_{t \in \Ta} |G_t| < \infty \right) = 1$.
\item Finite moments: $\EE \left[ \sup_{t \in \Ta} |G_t| \right] < \infty$ and $\sigma^2 = \sigma^2(G) = \sup_{t \in \Ta} \EE\left[G_t^2\right] < \infty$.
\item Concentration: there exists some $c > 0$ such that for any $t \ge 0$, 
\begin{align}\label{eq:Borel}
\PP\left( \left| \sup_{t \in \Ta} |G_t| - \EE \left[\sup_{t \in \Ta} |G_t|\right]\right| > t\right) \le 2 e^{-c\frac{u^2}{\sigma^2}}.
\end{align}
\end{itemize}
\end{theo}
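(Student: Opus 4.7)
The plan is to deduce all three assertions from the Borell--Tsirelson--Ibragimov--Sudakov concentration inequality applied to finite subsuprema, bootstrapping from the hypothesis that $M := \sup_{t \in \Ta} |G_t|$ is finite with positive probability. By separability I would pick a countable $S = \{t_1, t_2, \ldots\} \subseteq \Ta$ such that $M = \sup_{t \in S} |G_t|$ almost surely; then $M_n := \max_{1 \le i \le n} |G_{t_i}|$ is measurable and $M_n \uparrow M$. Fix $R > 0$ with $p := \PP(M \le R) > 0$.

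First I would establish that $\sigma^2 < \infty$. For every $t \in \Ta$ the bound $|G_t| \le M$ holds almost surely (by separability again), so $\PP(|G_t| \le R) \ge p$; but if $\mathrm{Var}(G_t) \to \infty$ along some sequence, the standard Gaussian tail bound forces $\PP(|G_t| \le R) \to 0$, a contradiction. With $\sigma^2 < \infty$ in hand, I would observe that each $M_n$ is a $\sigma$-Lipschitz function of the standard Gaussian driving $(G_{t_1}, \ldots, G_{t_n})$: in the Hilbert-space representation $G_t = \langle g_t, Z\rangle$ with $\|g_t\|^2 = \EE[G_t^2] \le \sigma^2$, the map $z \mapsto \max_{i \le n} |\langle g_{t_i}, z\rangle|$ is a supremum of linear functionals of norm at most $\sigma$. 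Borell's inequality therefore yields, uniformly in $n$,
\[
\PP\bigl(|M_n - \EE[M_n]| > u\bigr) \le 2 e^{-u^2/(2\sigma^2)}, \qquad u \ge 0.
\]

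Next I would use the hypothesis to control the expectations $\EE[M_n]$. Since $M_n \le M$ pointwise, $\PP(M_n \le R) \ge p$; if $\EE[M_n] > R + u$ then the concentration estimate gives
\[
\PP(M_n \le R) \le \PP\bigl(M_n - \EE[M_n] \le -u\bigr) \le 2 e^{-u^2/(2\sigma^2)},
\]
and requiring this to be at least $p$ forces $\EE[M_n] \le R + u_0$ with $u_0 := \sigma\sqrt{2\log(2/p)}$. By monotone convergence $\EE[M] = \lim_n \EE[M_n] \le R + u_0 < \infty$, which simultaneously yields the zero-one law ($M < \infty$ a.s.) and the finite-moments statement $\EE[\sup_t|G_t|] < \infty$ (together with $\sigma^2 < \infty$ already proved). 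Passing to the limit $n \to \infty$ in the displayed bound for $M_n$, using $M_n \uparrow M$ a.s. and $\EE[M_n] \to \EE[M]$, then delivers the concentration inequality \eqref{eq:Borel} for $M$ with constant $c = 1/2$.

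The main subtlety I expect is identifying the correct Lipschitz constant when invoking Borell's inequality: the $\ell^\infty$ Lipschitz constant of $x \mapsto \max_i |x_i|$ is $1$, but one has to translate back to the driving standard Gaussian, and it is precisely here that $\sigma = \sup_t \sqrt{\EE[G_t^2]}$ emerges as the right constant (rather than, say, the operator norm of the covariance matrix, which can be arbitrarily large relative to $\sigma$). Once this is in place, the rest is a formal bootstrap: the qualitative hypothesis $\PP(M < \infty) > 0$ together with sub-Gaussian concentration uniform in $n$ supplies the rigidity needed to pin down $\EE[M_n]$ uniformly, and a.s.\ finiteness, integrability, and concentration of $M$ all follow simultaneously.
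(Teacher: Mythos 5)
The paper states \Cref{theo:GPfacts} as a collection of standard facts about separable Gaussian processes and supplies no proof of its own, so there is no internal argument to compare against; your bootstrap derivation via the finite-dimensional Lipschitz--Gaussian concentration inequality is the classical proof of the Borell--Tsirelson--Ibragimov--Sudakov theorem, and it is correct. In particular you identify the right Lipschitz constant, $\max_{i\le n}\|g_{t_i}\| \le \sigma$ rather than anything involving the operator norm of the covariance matrix, and the uniform bound on $\EE[M_n]$ obtained by balancing the fixed mass $p = \PP(M \le R)$ against the sub-Gaussian lower tail is exactly what closes the bootstrap and yields $\EE[M] < \infty$, hence both the zero--one law and the moment bound. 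The only step worth spelling out is the final passage to the limit: since $\EE[M_n] \uparrow \EE[M] < \infty$, one has $M_n - \EE[M_n] \to M - \EE[M]$ almost surely, so $\{|M - \EE[M]| > u\} \subseteq \liminf_n \{|M_n - \EE[M_n]| > u\}$ and Fatou transports the uniform-in-$n$ sub-Gaussian bound to $M$; you assert this implicitly and it would be cleaner to record it.
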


The lemma below is an easy consequence of \Cref{theo:GPfacts}.
\begin{lem} \label{lem:ctsGP}
Let $G(\cdot)$ be a continuous Gaussian field on some compact domain $K \subset \RR^d$, then the following are true.
\begin{itemize}
\item[(i)] There exists some $c > 0$ such that
\begin{align}\label{eq:fast_tail}
\PP\left( \sup_{x \in K} |G(x)| > t \right) \le \frac{1}{c} e^{-c t^2}, \qquad \forall t \ge 0.
\end{align}
\item[(ii)]
Let $x \in \mathrm{int}(K)$. For any monotone functions $\Psi: \RR \to \RR$ with at most exponential growth at infinity,
\begin{align} \label{eq:squeeze}
\lim_{r \to 0^+} \EE \left[ \Psi\left(\sup_{y \in B(x, r)} G(y)\right)\right]
= \lim_{r \to 0^+} \EE \left[ \Psi\left(\inf_{y \in B(x, r)} G(y)\right)\right]
= \EE\left[\Psi\left(G(x)\right)\right]
\end{align}
\end{itemize}
\end{lem}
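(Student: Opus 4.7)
The plan is to deduce both parts of the lemma directly from \Cref{theo:GPfacts} together with the continuity of $G$ on $K$. For part (i), since $K$ is compact and $G$ is continuous, $\sup_{x \in K} |G(x)| < \infty$ almost surely, so \Cref{theo:GPfacts} applies. Writing $M := \EE[\sup_{x \in K}|G(x)|] < \infty$ and $\sigma^2 := \sup_{x \in K}\EE[G(x)^2] < \infty$, the Borell--TIS concentration inequality \eqref{eq:Borel} gives
\begin{align*}
\PP\left(\sup_{x \in K}|G(x)| > t\right) \le 2 e^{-c_0(t-M)^2/\sigma^2}
\end{align*}
for $t \ge M$. For $t < M$ one bounds the probability trivially by $1$. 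These two regimes can be combined into a single estimate of the form $\frac{1}{c} e^{-ct^2}$ by shrinking $c$ to absorb the constants $M, \sigma, c_0$, which gives (i).

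For part (ii), I would argue by almost-sure pointwise convergence followed by dominated convergence. Since $G$ is continuous on $K$ and $x \in \mathrm{int}(K)$, for any $\omega$ in the full-measure event of continuous sample paths we have $\sup_{y \in B(x,r)} G(y) \downarrow G(x)$ and $\inf_{y \in B(x,r)} G(y) \uparrow G(x)$ as $r \to 0^+$ (the monotonicity coming from nested balls). A monotone function $\Psi$ has at most countably many discontinuities, and either $\mathrm{Var}(G(x)) = 0$ (in which case the claim is trivial) or $G(x)$ has a density, so $\Psi$ is almost surely continuous at $G(x)$; hence $\Psi(\sup_{y \in B(x,r)} G(y)) \to \Psi(G(x))$ and similarly for the infimum, almost surely.

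To upgrade to convergence of expectations I would produce an integrable majorant. Fix any $r_0 > 0$ with $\overline{B(x,r_0)} \subset K$. For $r \le r_0$ one has $|\sup_{y \in B(x,r)} G(y)|, |\inf_{y \in B(x,r)} G(y)| \le S := \sup_{y \in \overline{B(x,r_0)}} |G(y)|$. The hypothesis that $\Psi$ is monotone with at most exponential growth yields constants $C_1, C_2 > 0$ such that $|\Psi(z)| \le C_1 e^{C_2|z|}$; in particular $|\Psi(\sup_{y \in B(x,r)} G(y))| \le C_1 e^{C_2 S}$, and the same bound applies to the infimum version. Applying part (i) to the continuous Gaussian field $G$ restricted to $\overline{B(x,r_0)} \subset K$ gives $\PP(S > t) \le c^{-1} e^{-ct^2}$, from which $\EE[e^{C_2 S}] < \infty$ follows by a standard tail integration. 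Dominated convergence then yields both limits equal $\EE[\Psi(G(x))]$, completing (ii).

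The only step that demands any care is verifying that $\Psi$ is almost surely continuous at $G(x)$; here the dichotomy $\mathrm{Var}(G(x)) = 0$ versus $\mathrm{Var}(G(x)) > 0$ handles the issue. The rest is a straightforward combination of \Cref{theo:GPfacts} with sample-path continuity, and I do not expect any serious obstacle.
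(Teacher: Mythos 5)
Your proof follows essentially the same route as the paper: part~(i) is, as you say, a direct consequence of \Cref{theo:GPfacts} and the Borell--TIS inequality, and for part~(ii) both you and the paper combine almost-sure sample-path convergence of $\sup_{B(x,r)} G$ and $\inf_{B(x,r)} G$ to $G(x)$ with dominated convergence, where the dominating function is built from $\sup_{\overline{B(x,r_0)}}|G|$ and the exponential growth of $\Psi$, made integrable by part~(i). (The paper phrases the domination as $\sup_{y \in B(x,r_0)}\Psi(G(y))$ after the reduction $\Psi = \Psi_+ - \Psi_-$; your $C_1 e^{C_2 S}$ is the same idea.) You are actually \emph{more} careful than the paper on one point: when $\mathrm{Var}(G(x)) > 0$ you correctly observe that $G(x)$ has a density and so almost surely avoids the countably many discontinuities of $\Psi$, a step the paper's invocation of ``continuity of $G$'' glosses over. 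Where you slip is in dismissing the case $\mathrm{Var}(G(x))=0$ as ``trivial'': there $G(x)=0$ deterministically while $\sup_{B(x,r)}G \downarrow 0$ from above and $\inf_{B(x,r)}G \uparrow 0$ from below, so if $\Psi$ has a jump at $0$ the two one-sided limits $\Psi(0^+)$ and $\Psi(0^-)$ need not agree with $\Psi(0)$ and the asserted equality \eqref{eq:squeeze} can genuinely fail (e.g.\ $G(y)=y_1\,N$ with $N$ standard normal, $x=0$, $\Psi = \mathds{1}_{[0,\infty)}$). This is a defect in the lemma's hypotheses rather than a flaw unique to your argument --- the paper's own proof has the same hidden gap --- and it is harmless for the applications in the paper, where $\Psi$ is always continuous, but your parenthetical claim that the degenerate case is trivial is not correct as stated.
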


\begin{proof}
Since $G(\cdot)$ is continuous on $K$, it is separable and satisfies $\sup_{x \in K} |G(x)| < \infty$ almost surely. By \Cref{theo:GPfacts} we have $\EE\left[ \sup_{x \in K} |G(x)|\right] < \infty$ and $\sigma^2(G) < \infty$. The tail in (i) can thus be obtained from the concentration inequality \eqref{eq:Borel}.

For (ii), note that by monotonicity we can split $\Psi$ into positive and negative parts $\Psi = \Psi_+ - \Psi_-$, such that $\Psi_{\pm}$ are monotone functions with at most exponential growth at infinity. Since we can deal with $\Psi_+$ and $\Psi_-$ separately, we may as well assume without loss of generality that $\Psi$ is non-negative. Now take $r_0 > 0$ such that $B(x, r_0) \in K$, and consider the case where $\Psi$ is non-decreasing. By \eqref{eq:fast_tail} and the assumption on the growth of $\Psi$ at infinity, we have
\begin{align*}
\EE \left[ \Psi\left(\sup_{y \in B(x, r_0)} G(y)\right)\right] < \infty.
\end{align*}

\noindent But then for any $r \in (0, r_0)$,
\begin{align*}
0 \le \inf_{y \in B(x, r)} \Psi(G(y)) \le \sup_{y \in B(x, r)} \Psi(G(y)) \le \sup_{y \in B(x, r_0)} \Psi(G(y))
\end{align*}

\noindent and \eqref{eq:squeeze} follows from the continuity of $G$ and dominated convergence. The case where $\Psi$ is non-increasing is similar.
\end{proof}

\subsection{Decomposition of Gaussian fields}
We mention a result concerning the decomposition of symmetric functions from the very recent paper \cite{JSW2018}. Let $f(x, y)$ be a symmetric function on $D \times D$ for some domain $D \subset \RR^d$. We say $f$ is in  the local Sobolev space $H_{\mathrm{loc}}^s(D \times D)$ of index $s > 0$ if $\kappa f$ is in $H^s(D \times D)$ for any $\kappa \in C_c^\infty(D \times D)$, i.e.
\begin{align*}
\int_{\RR^d} (1+|\xi|^2)^s|\widehat{(\kappa f)}(\xi)|^2 d\xi < \infty
\end{align*}

\noindent where $\widehat{(\kappa f)}$ is the Fourier transform of $\kappa f$ (see more details in \cite[Section 2]{JSW2018}). Then

\begin{lem}[{cf. \cite[Lemma 3.2]{JSW2018}}]\label{lem:holder_dec}
If $f \in H_{\mathrm{loc}}^s(D \times D)$ for some $s > d$, then there exist two centred, H\"older-continuous Gaussian processes $G_\pm$ on $\RR^d$ such that
\begin{align}
\EE[G_+(x) G_+(y)] + \EE[G_-(x) G_-(y)] = f(x, y), \qquad \forall x, y \in D'
\end{align}

\noindent for any bounded open set $D'$ such that $\overline{D'} \subset D.$
\end{lem}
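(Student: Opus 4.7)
The plan is to reduce to a compactly supported Sobolev kernel on $\RR^{2d}$, perform a spectral split into positive- and negative-semidefinite parts, and then upgrade the resulting centred Gaussian fields to H\"older continuous ones via Kolmogorov's criterion.

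First, I would localise. Choose a symmetric $\kappa \in C_c^\infty(D \times D)$ with $\kappa \equiv 1$ on an open neighbourhood of $\overline{D'} \times \overline{D'}$, and set $\tilde{f} := \kappa f$. Extended by zero, $\tilde{f}$ lies in $H^s(\RR^{2d})$ by definition of $H^s_{\mathrm{loc}}$, and $\tilde{f} \equiv f$ on $D' \times D'$. Since $s > d = (2d)/2$, Sobolev embedding gives $\tilde{f} \in C^{0,\alpha}(\RR^{2d})$ for some $\alpha > 0$. Being symmetric and in $L^2(\RR^{2d})$, $\tilde{f}$ is the kernel of a compact self-adjoint Hilbert--Schmidt operator $T_{\tilde{f}}$ on $L^2(\RR^d)$, and hence admits an $L^2$-convergent expansion $\tilde{f} = \sum_n \lambda_n \phi_n \otimes \phi_n$ with $\{\phi_n\}$ orthonormal and $\lambda_n \to 0$. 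Splitting according to the sign of the eigenvalues,
\[
\tilde{f}_+ := \sum_{\lambda_n > 0} \lambda_n \, \phi_n \otimes \phi_n, \qquad \tilde{f}_- := \sum_{\lambda_n < 0} |\lambda_n| \, \phi_n \otimes \phi_n,
\]
yields $\tilde{f} = \tilde{f}_+ - \tilde{f}_-$ with $\tilde{f}_\pm$ symmetric and positive semidefinite, hence covariance kernels of two independent centred Gaussian fields $G_\pm$ on $\RR^d$.

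It then remains to upgrade $G_\pm$ to H\"older continuous versions, which by Kolmogorov's continuity criterion reduces to an estimate of the form $\EE[(G_\pm(x) - G_\pm(y))^2] \le C |x-y|^{2\beta}$ for some $\beta > 0$, locally uniformly. A convenient route is to factor each positive Hilbert--Schmidt operator $T_\pm := \sum_{\pm\lambda_n > 0}|\lambda_n| \phi_n \otimes \phi_n$ as $S_\pm^* S_\pm$ with $S_\pm$ Hilbert--Schmidt; if $s_\pm$ denotes the kernel of $S_\pm$, then $\tilde{f}_\pm(x,y) = \int s_\pm(z,x) s_\pm(z,y) \, dz$ and the variance above equals $\|s_\pm(\cdot, x) - s_\pm(\cdot, y)\|_{L^2(\RR^d)}^2$. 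H\"older continuity of the map $x \mapsto s_\pm(\cdot, x) \in L^2(\RR^d)$ is then to be extracted from the $H^s$-regularity of $\tilde{f}$ in the second variable, via operator-monotonicity estimates relating the square roots $S_\pm = T_\pm^{1/2}$ to $T_{\tilde{f}}$. This last step is the main obstacle: the abstract positive/negative-part spectral decomposition is not manifestly compatible with pointwise regularity, so a careful operator-theoretic argument is needed to transfer the Sobolev smoothness of the full kernel $\tilde{f}$ to each spectral piece $\tilde{f}_\pm$, while the localisation and spectral framework in the previous paragraph are entirely routine.
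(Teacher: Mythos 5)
Your outline has the right overall shape (localise, split the operator into positive and negative parts, upgrade to H\"older via Kolmogorov), but the proof as written has a genuine gap exactly at the point you yourself flag: the raw spectral split $\tilde f = \tilde f_+ - \tilde f_-$ of the Hilbert--Schmidt operator $T_{\tilde f}$ does \emph{not} in general inherit pointwise regularity from $\tilde f$. Sobolev smoothness of $\tilde f$ is equivalent to a decay rate for the eigenvalues $\lambda_n$, but it gives no control on the sup norms of the eigenfunctions $\phi_n$; the series $\sum_n \lambda_n \phi_n(x)\phi_n(y)$ can converge to a smooth limit \emph{because of} sign cancellations among the terms, and these cancellations are destroyed once you keep only the positive (or only the negative) eigenvalues. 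There is no abstract operator-monotonicity argument that rescues this, because the objects you would need to compare ($T_\pm^{1/2}\delta_x$, or the pointwise values $s_\pm(\cdot,x)$) are not a priori well defined when all you know is that $T_\pm \le |T_{\tilde f}|$ as operators on $L^2$. So the ``careful operator-theoretic argument'' you defer to is not a technicality; it is the entire content of the lemma, and the naive split does not supply it.

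The missing idea, which is how the cited Lemma~3.2 of \cite{JSW2018} actually proceeds, is to \emph{conjugate by a smoothing operator before splitting}. Fix $a$ with $d/4 < a \le s/4$ (possible since $s>d$). Because $\tilde f \in H^s(\RR^{2d})$ and $(1+|\xi_1|^2)^{2a}(1+|\xi_2|^2)^{2a}\le (1+|\xi_1|^2+|\xi_2|^2)^{4a}\le (1+|\xi|^2)^{s}$, the function $(1-\Delta_x)^a(1-\Delta_y)^a\tilde f$ lies in $L^2(\RR^{2d})$, i.e.\ $B:=(1-\Delta)^a\,T_{\tilde f}\,(1-\Delta)^a$ is Hilbert--Schmidt and self-adjoint on $L^2(\RR^d)$. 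Split $B=B_+-B_-$ spectrally into its positive and negative parts, and set $T_\pm := (1-\Delta)^{-a} B_\pm (1-\Delta)^{-a}$, with kernels
\begin{align*}
\tilde f_\pm(x,y)\;=\;\bigl\langle B_\pm\, G_a(\cdot-x),\, G_a(\cdot-y)\bigr\rangle_{L^2(\RR^d)},
\end{align*}
where $G_a$ is the Bessel kernel ($\widehat G_a(\xi)=(1+|\xi|^2)^{-a}$), which lies in $L^2(\RR^d)$ since $4a>d$. These are manifestly positive semidefinite and $\tilde f_+-\tilde f_-=\tilde f$; this is the desired decomposition. The regularity is now transparent: for the centred Gaussian fields $G_\pm$ with covariance $\tilde f_\pm$,
\begin{align*}
\EE\bigl[(G_\pm(x)-G_\pm(y))^2\bigr]
= \bigl\langle B_\pm\,(G_a(\cdot-x)-G_a(\cdot-y)),\, G_a(\cdot-x)-G_a(\cdot-y)\bigr\rangle
\le \|B\|_{\mathrm{op}}\,\|G_a(\cdot-x)-G_a(\cdot-y)\|_{L^2}^2,
\end{align*}
and a direct Fourier computation gives $\|G_a(\cdot-x)-G_a(\cdot-y)\|_{L^2}^2 \le C\,|x-y|^{\min(2,\,4a-d)}$, so Kolmogorov's criterion yields H\"older continuous modifications. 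In short: you must smooth, \emph{then} split, then un-smooth; splitting first and smoothing afterwards is not available because the spectral parts of a Sobolev kernel need not be Sobolev, or even continuous.
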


This decomposition result has various important implications, one of which is the positive-definiteness of the logarithmic kernel. The following result may be seen as a trivial special case of \cite[Theorem B]{JSW2018} and has been known since \cite{RV2010}.

\begin{lem}\label{lem:pd_exact}
For each $L \in \RR$, there exists $r_d(L) > 0$ such that the kernel
\begin{align} \label{eq:L_exact}
K_L(x, y) = -\log |x-y| + L
\end{align}

\noindent is positive definite on $B(0, r_d(L)) \subset \RR^d$. In particular, for any $R > 0$ there exists some $L > 0$ such that $K_L$ is positive definite on $B(0, R)$.
\end{lem}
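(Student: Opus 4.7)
The plan is a scaling reduction followed by an appeal to a classical construction of a log-correlated Gaussian field. First I would observe the scaling identity
\[
K_L(x, y) \;=\; -\log|x - y| + L \;=\; -\log\bigl|e^{-L}x - e^{-L}y\bigr| \;=\; K_0\bigl(e^{-L}x,\, e^{-L}y\bigr),
\]
so that the change of variable $x \mapsto e^{-L} x$ gives a bijection of signed measures on $B(0, r)$ onto signed measures on $B(0, e^{-L}r)$ under which $K_L$ pulls back to $K_0$. Consequently $K_L$ is positive definite on $B(0, r)$ if and only if $K_0(x,y) = -\log|x-y|$ is positive definite on $B(0, e^{-L} r)$, and the whole lemma reduces to the single claim: \emph{there exists $\rho_0 > 0$ such that $K_0$ is positive definite on $B(0, \rho_0)$}.

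For this reduced claim I would invoke \cite{RV2010}, which constructs a $\sigma$-positive decomposition of the logarithmic kernel: for a suitable constant $L^* \in \mathbb{R}$, the shifted kernel $-\log|x-y| + L^*$ can be written on a small enough ball as a pointwise convergent sum $\sum_n K_n(x,y)$ of non-negative, continuous, positive-definite kernels. Positive definiteness of this shifted kernel on that small ball follows immediately, and the scaling identity above transfers this to positive definiteness of $K_0$ on a further rescaled ball. Equivalently, this is the trivial special case $f \equiv 0$ of \cite[Theorem B]{JSW2018}.

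Having fixed such $\rho_0 > 0$, the first conclusion holds with $r_d(L) := e^L \rho_0$, and the `in particular' clause follows by choosing any $L \geq \log(R/\rho_0)$ so that $e^{-L} R \leq \rho_0$. The scaling reduction and the translation between $L$ and $r_d(L)$ are purely algebraic; the real obstacle is the reduced existence claim, which -- while standard -- rests on an actual construction of a log-correlated Gaussian field, and I would simply cite \cite{RV2010, JSW2018} rather than reproducing the details.
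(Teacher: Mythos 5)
Your proposal is correct and takes essentially the same route as the paper, which states the lemma without a displayed proof and simply cites \cite{RV2010} and \cite[Theorem~B]{JSW2018} for the positive-definiteness of the log kernel on a small ball. Your explicit scaling identity $K_L(x,y)=K_0(e^{-L}x,e^{-L}y)$, yielding $r_d(L)=e^L r_d(0)$, is exactly the unstated algebraic step the paper relies on (and is also used implicitly later in the proof of \Cref{cor:tail_bound}(i)).
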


For the sake of convenience, we shall from now on call \eqref{eq:L_exact} the $L$-exact kernel, and when $L = 0$ we simply call $K_0(\cdot, \cdot)$ the exact kernel and write $r_d = r_d(0)$. The exact kernel will play a pivotal role as the reference point from which we extrapolate our tail result to general kernels in the subcritical regime.

\subsection{Gaussian multiplicative chaos} \label{sec:GMC}
Given a log-correlated Gaussian field \ref{eq:LGF_cov}, there are various equivalent constructions of the GMC measure $M_{\gamma}$. In the subcritical case $\gamma \in (0, \sqrt{2d})$, one approach is the regularisation procedure, which is first suggested in \cite{RoV2010} and then generalised/simplified in \cite{Ber2017}. The idea is to pick any suitable mollifier $\theta(\cdot)$ and define
\begin{align}\label{eq:GMC_reg}
M_{\gamma, \epsilon}(dx) = e^{\gamma X_\epsilon(x) - \frac{\gamma^2}{2} \EE[X_\epsilon(x)^2]} dx
\end{align}

\noindent where $X_{\epsilon}(\cdot) = X \ast \theta_{\epsilon}(\cdot)$ is a continuous Gaussian field on $D$. Then
\begin{theo}
For $\gamma \in (0, \sqrt{2d})$,  the sequence of measures $M_{\gamma, \epsilon}$ converges in probability to some measure  $M_{\gamma}$ in the weak$^*$ topology as $\epsilon \to 0^+$. The limit $M_{\gamma}$ is independent of the choice of the mollification $\theta$.
\end{theo}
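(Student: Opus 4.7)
The plan is to follow the elementary approach due to Berestycki, which handles the entire subcritical regime $\gamma \in (0, \sqrt{2d})$ uniformly by means of a thick-point truncation. First I would reduce the claimed weak-$*$ convergence in probability to a one-dimensional statement: by separability of $C_c(D)$, it suffices to show that for every fixed test function $\phi \in C_c(D)$ the sequence $M_{\gamma,\epsilon}(\phi) = \int \phi(x) M_{\gamma,\epsilon}(dx)$ is Cauchy in probability as $\epsilon \to 0^+$. A diagonal subsequence extraction will then upgrade this to convergence of measures.

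The next step is to introduce, for a small parameter $\alpha > 0$, the truncated measure
\begin{align*}
M^\alpha_{\gamma,\epsilon}(dx) = \mathbf{1}\{X_\epsilon(x) \le (\gamma+\alpha) \log(1/\epsilon)\} M_{\gamma,\epsilon}(dx).
\end{align*}
Via a direct two-point calculation using the approximation $\EE[X_\epsilon(x) X_\epsilon(y)] = -\log(|x-y|\vee \epsilon) + O(1)$ together with a Cameron--Martin shift that absorbs the exponential into a translation of the Gaussian vector $(X_\epsilon(x), X_\epsilon(y))$, one checks that $\sup_\epsilon \EE[M^\alpha_{\gamma,\epsilon}(\phi)^2] < \infty$ provided $\alpha < Q - \gamma$. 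The same computation shows the cross moments $\EE[M^\alpha_{\gamma,\epsilon}(\phi) M^\alpha_{\gamma,\delta}(\phi)]$ converge as $\epsilon,\delta \to 0$ to $\iint \phi(x)\phi(y) e^{\gamma^2 \EE[X(x)X(y)]} dx\, dy$ by dominated convergence, so that $M^\alpha_{\gamma,\epsilon}(\phi)$ is Cauchy in $L^2$ and hence in probability.

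The remaining task is to remove the truncation: one shows that $\EE\bigl[(M_{\gamma,\epsilon} - M^\alpha_{\gamma,\epsilon})(\phi)\bigr]$ vanishes uniformly as $\epsilon \to 0$, and consequently in probability. After the Cameron--Martin shift $X_\epsilon(x) \mapsto X_\epsilon(x) + \gamma \EE[X_\epsilon(x)^2]$, the unwanted event becomes a Gaussian tail probability of the form $\PP(\mathcal{N}(0,1) > \alpha \sqrt{\log(1/\epsilon)})$, which tends to $0$; combining this with the $L^2$ Cauchy property of the truncated measures yields Cauchy in probability for the full $M_{\gamma,\epsilon}(\phi)$. For the independence from the mollifier, I would couple two mollifications $X_\epsilon^{(1)} = X * \theta^{(1)}_\epsilon$ and $X_\epsilon^{(2)} = X * \theta^{(2)}_\epsilon$ on the same probability space and apply Kahane's interpolation formula (cf.\ \eqref{eq:Kahane_int} later in the paper) to a smooth bounded functional of the measures: the two mollifications give rise to covariance kernels that differ only by a uniformly bounded continuous term, and running the interpolation through the truncated measures (where the two-point integrals are absolutely convergent) shows both limits coincide.

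The main obstacle is the $L^2$ estimate for the truncated measure when $\gamma \in [\sqrt{d}, \sqrt{2d})$, where the unrestricted measure is no longer in $L^2$. Here the indicator $\mathbf{1}\{X_\epsilon(x) \le (\gamma+\alpha)\log(1/\epsilon)\}$ must be handled delicately; the cleanest route is to explicitly diagonalise the joint covariance of $(X_\epsilon(x), X_\epsilon(y))$, perform the Cameron--Martin shift, and then bound the resulting two-dimensional Gaussian integral by separating the contributions from the scales $|x-y| \le \epsilon$ and $|x-y| > \epsilon$. The choice $\alpha \in (0, Q - \gamma)$ is exactly what makes the short-range contribution integrable.
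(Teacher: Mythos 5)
Your plan—Berestycki's thick-point truncation followed by an interpolation argument for mollifier-independence—is the right direction, and the paper itself only cites this theorem from \cite{RoV2010} and \cite{Ber2017} without proof. However, the truncation you use is at a \emph{single} scale, and this creates a genuine gap precisely in the range $\gamma \in [\sqrt{d}, \sqrt{2d})$ that you flag as the obstacle; the fix you sketch does not close it.

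The problem is not the diagonal region $|x-y| \lesssim \epsilon$, which your choice $\alpha \in (0, Q-\gamma)$ does tame. The problem is the mesoscopic range $\epsilon \ll |x-y| \ll 1$. After the Cameron--Martin shift in the two-point computation, the indicator
$\mathbf{1}\{X_\epsilon(x) \le (\gamma+\alpha)\log(1/\epsilon)\}$
becomes $\mathbf{1}\{X_\epsilon(x) \le \alpha\log(1/\epsilon) - \gamma\,\EE[X_\epsilon(x)X_\epsilon(y)]\}$. For any \emph{fixed} $|x-y|>0$, the subtracted term $\gamma\,\EE[X_\epsilon(x)X_\epsilon(y)] \approx \gamma\log\frac{1}{|x-y|}$ stays bounded while $\alpha\log(1/\epsilon) \to \infty$, so the shifted probability tends to $1$ and the truncation becomes asymptotically trivial. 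By Fatou, this forces
$\liminf_{\epsilon\to 0}\EE[M^\alpha_{\gamma,\epsilon}(\phi)^2] \ge \iint \frac{|\phi(x)\phi(y)|}{|x-y|^{\gamma^2}}\,e^{\gamma^2 f(x,y)}\,dx\,dy = \infty$
whenever $\gamma \ge \sqrt{d}$ and $\phi \ge 0$, $\phi\not\equiv 0$. So the uniform $L^2$ bound you assert, and the claimed convergence of $\EE[M^\alpha_{\gamma,\epsilon}(\phi)M^\alpha_{\gamma,\delta}(\phi)]$ to that (divergent) integral, both fail, and the $L^2$-Cauchy step collapses. The single-scale truncation provides no decay at fixed off-diagonal separation, which is exactly what is needed to compensate the non-integrable singularity $|x-y|^{-\gamma^2}$.

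The remedy in \cite{Ber2017} is to truncate \emph{uniformly over all scales}: one works with the good event
$G^\alpha_\epsilon(x) = \{\,X_r(x) \le (\gamma+\alpha)\log(1/r)\text{ for all }r\in[\epsilon,1]\,\}$,
which is decreasing in $\epsilon$ and therefore survives in the limit (the limiting truncation is a non-trivial condition on the whole field). Under the shifted measure, the event $G^\alpha_\epsilon(x) \cap G^\alpha_\epsilon(y)$ then has a probability that decays as a positive power of $|x-y|$ \emph{uniformly in} $\epsilon$, and this is what renders the two-point integral convergent for all $\gamma < \sqrt{2d}$. With that replacement, the $L^1$ removal of the truncation, the $L^2$-Cauchy argument, and your Kahane-interpolation argument for mollifier-independence go through as you describe. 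The rest of your outline (reduction to $C_c(D)$ test functions, Cameron--Martin shift for the removal step, interpolation between mollifiers with bounded covariance difference) is sound.
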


We collect a few standard results in the literature of GMC. The first is the celebrated interpolation principle by Kahane.
\begin{lem}[\cite{Kah1985}] \label{lem:Kahane}
Let $\rho$ be a Radon measure on $D$, $X(\cdot)$ and $Y(\cdot)$ be two continuous centred Gaussian fields, and $F: \RR_+ \to \RR$ be some smooth function with at most polynomial growth at infinity. For $t \in [0,1]$, define $Z_t(x) = \sqrt{t}X(x) + \sqrt{1-t}Y_t(x)$ and
\begin{align}
\varphi(t) := \EE \left[ F(W_t)\right], \qquad
W_t := \int_D e^{Z_t(x) - \frac{1}{2}\EE[Z_t(x)^2]} \rho(dx).
\end{align}

\noindent Then the derivative of $\varphi$ is given by
\begin{equation}\label{eq:Kahane_int}
\begin{split}
\varphi'(t) & = \frac{1}{2} \int_D \int_D \left(\EE[X(x) X(y)] - \EE[Y(x) Y(y)]\right) \\
& \qquad \qquad \times \EE \left[e^{Z_t(x) + Z_t(y) - \frac{1}{2}\EE[Z_t(x)^2] - \frac{1}{2}\EE[Z_t(y)^2]} F''(W_t) \right] \rho(dx) \rho(dy).
\end{split}
\end{equation}

\noindent In particular, if
\begin{align*}
\EE[X(x) X(y)] \le \EE[Y(x) Y(y)] \qquad \forall x, y \in D,
\end{align*}

\noindent then for any convex $F: \RR_+ \to \RR$
\begin{align}\label{eq:Gcomp}
\EE \left[F\left(\int_D e^{X(x) - \frac{1}{2} \EE[X(x)^2]}\rho(dx)\right)\right]
\le
\EE \left[F\left(\int_D e^{Y(x) - \frac{1}{2} \EE[Y(x)^2]}\rho(dx)\right)\right].
\end{align}

\noindent and the inequality is reversed if $F$ is concave instead.
\end{lem}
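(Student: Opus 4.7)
Since only the covariance structures of $X$ and $Y$ enter the statement, I would first realize $X$ and $Y$ on the same probability space as independent processes. Having done so, both summands $\sqrt{t}X(x)$ and $\sqrt{1-t}Y(x)$ live on independent Gaussian inputs, so differentiating $\varphi(t)$ in $t$ reduces to a computation on a product space. Writing
\begin{equation*}
M_t(x) = \exp\!\bigl(Z_t(x) - \tfrac{1}{2}\EE[Z_t(x)^2]\bigr),
\qquad W_t = \int_D M_t(x)\,\rho(dx),
\end{equation*}
and using independence so that $\EE[Z_t(x)^2] = t\EE[X(x)^2] + (1-t)\EE[Y(x)^2]$, I compute
\begin{equation*}
\partial_t M_t(x) = M_t(x)\Bigl[\tfrac{1}{2\sqrt{t}}X(x) - \tfrac{1}{2\sqrt{1-t}}Y(x) - \tfrac{1}{2}\bigl(\EE[X(x)^2]-\EE[Y(x)^2]\bigr)\Bigr].
\end{equation*}
Exchanging $\partial_t$ with $\EE$ and with $\int_D \rho(dx)$ — which is permissible because $F$ has at most polynomial growth, $X,Y$ are continuous on a compact set and all positive moments of $W_t$ of subcritical GMCs-with-bounded-field-on-top are finite — gives $\varphi'(t)$ as a double expectation involving $F'(W_t)M_t(x)X(x)$ and $F'(W_t)M_t(x)Y(x)$.

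The heart of the argument is Gaussian integration by parts. To avoid the formal functional derivative, I would fix the independent Gaussian inputs via a Karhunen--Lo\`eve expansion $X(x) = \sum_k \xi_k e_k(x)$, $Y(x) = \sum_k \eta_k e_k'(x)$, with iid $N(0,1)$ variables $(\xi_k),(\eta_k)$, and apply the scalar identity $\EE[\xi_k G(\xi)] = \EE[\partial_{\xi_k} G(\xi)]$. Since
\begin{equation*}
\partial_{\xi_k} M_t(z) = \sqrt{t}\,e_k(z)\,M_t(z),
\qquad \partial_{\xi_k} W_t = \sqrt{t}\int_D e_k(z) M_t(z)\,\rho(dz),
\end{equation*}
(note that the It\^o-type variance correction in $M_t$ ensures there is \emph{no} extra $\EE[X(x)^2]$ coming from the drift), summing $\sum_k e_k(x)\,(\cdot)$ and using $\sum_k e_k(x)e_k(z) = \EE[X(x)X(z)]$ yields
\begin{equation*}
\EE\!\left[X(x) F'(W_t) M_t(x)\right]
= \sqrt{t}\,\EE[X(x)^2]\,\EE[F'(W_t)M_t(x)]
+ \sqrt{t}\!\int_D\! \EE[X(x)X(z)]\,\EE[F''(W_t) M_t(x) M_t(z)]\,\rho(dz).
\end{equation*}
An analogous formula holds for $Y$ with a sign flip. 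When I insert these back into the expression for $\varphi'(t)$, the ``diagonal'' boundary pieces $\tfrac{1}{2}\EE[X(x)^2]\,\EE[F'M_t]$ and $-\tfrac{1}{2}\EE[Y(x)^2]\,\EE[F'M_t]$ produced by the IBP cancel exactly the non-random drift term $-\tfrac{1}{2}(\EE[X(x)^2]-\EE[Y(x)^2])$; only the $F''$ contributions survive, giving \eqref{eq:Kahane_int} after Fubini.

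For the comparison inequality \eqref{eq:Gcomp}, I observe that $\EE[F''(W_t)M_t(x)M_t(y)] \ge 0$ whenever $F$ is convex, so the sign of $\varphi'(t)$ is dictated by that of $\EE[X(x)X(y)] - \EE[Y(x)Y(y)]$. Under the hypothesis this difference is $\le 0$, hence $\varphi'(t) \le 0$ on $[0,1]$, and the inequality $\varphi(1) \le \varphi(0)$ is precisely \eqref{eq:Gcomp}; the concave case is identical with reversed signs. The main obstacle I anticipate is the rigorous justification of the IBP step: the KL expansion gives it cleanly when $X,Y$ are continuous with finite expansions, and the general case follows by a standard truncation-and-approximation argument using the continuity of $X,Y$, the tail bound \eqref{eq:fast_tail} from \Cref{lem:ctsGP}, and the polynomial growth of $F,F',F''$ to pass to the limit under dominated convergence. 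I also need to verify that the $\rho(dx)\rho(dy)$ integral in \eqref{eq:Kahane_int} is absolutely convergent, which follows again from the finite variance of $Z_t$ over a compact domain together with the polynomial growth of $F''$.
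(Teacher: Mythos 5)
The paper does not prove this lemma; it is stated and attributed to Kahane \cite{Kah1985} without a proof being reproduced. Your argument is the standard one: differentiate $\varphi$ in $t$ with $X\perp Y$, perform Gaussian integration by parts in the Karhunen--Lo\`eve coordinates of $X$ (and symmetrically of $Y$), observe that the diagonal $\EE[X(x)^2]$ and $\EE[Y(x)^2]$ boundary terms produced by the IBP cancel exactly against the deterministic drift coming from the normalising factor $\exp(-\tfrac12\EE[Z_t(x)^2])$, and keep only the $F''$ terms; the comparison inequality then follows from the sign of $F''$. Your computation of $\partial_t M_t(x)$, the cancellation mechanism, and the conclusion are all correct, and the approximation/domination issues you flag (finite-rank truncation of the KL expansion, use of the Gaussian tail bound and polynomial growth of $F$) are the right ones; the only point you gloss over is that $\partial_t Z_t$ involves $t^{-1/2}$ and $(1-t)^{-1/2}$ factors which blow up at the endpoints, so one should argue differentiability on $(0,1)$ and then invoke continuity of both $\varphi$ and the right-hand side of \eqref{eq:Kahane_int} on all of $[0,1]$ — a standard but worth-mentioning step.
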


While \Cref{lem:Kahane} is stated for continuous fields, it may be extended to log-correlated fields if we first apply it to mollified fields $X_{\epsilon}$ and $Y_{\epsilon}$ and take the limit $\epsilon \to 0^+$. Such argument will work immediately for comparison principles \eqref{eq:Gcomp} and we shall make no further remarks on that. For the interpolation principle \eqref{eq:Kahane_int} we only need the following weaker statement which may be extended to log-correlated fields in the same way.

\begin{cor}\label{cor:interpolate}
Under the same assumptions and notations in \Cref{lem:Kahane}, if there exists some $C>0$ such that
\begin{align*}
\left|\EE[X(x) X(y)] - \EE[Y(x) Y(y)]\right| \le C \qquad \forall x, y \in D,
\end{align*}

\noindent then
\begin{align*}
|\varphi'(t)| \le \frac{C}{2} \EE \left[ (W_t)^2 |F''(W_t)|\right]
\end{align*}

\noindent and consequently
\begin{align*}
|\varphi(1) - \varphi(0)| \le \frac{C}{2} \int_0^1 \EE \left[ (W_t)^2 |F''(W_t)|\right] dt.
\end{align*}
\end{cor}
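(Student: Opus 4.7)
The plan is to derive the first inequality directly from Kahane's interpolation formula \eqref{eq:Kahane_int} by taking absolute values inside the double integral, then simply integrate in $t$ to obtain the second inequality.

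More precisely, I would start from the identity \eqref{eq:Kahane_int} for $\varphi'(t)$ and apply the triangle inequality to pull the absolute value inside both the $\rho \otimes \rho$ integral and the outer expectation. Using the hypothesis $|\EE[X(x)X(y)] - \EE[Y(x)Y(y)]| \le C$ uniformly in $(x,y) \in D \times D$, and noting that the remaining factor
\[
e^{Z_t(x) + Z_t(y) - \frac{1}{2}\EE[Z_t(x)^2] - \frac{1}{2}\EE[Z_t(y)^2]}
\]
is non-negative, we obtain
\[
|\varphi'(t)| \le \frac{C}{2} \int_D \int_D \EE\!\left[e^{Z_t(x) + Z_t(y) - \frac{1}{2}\EE[Z_t(x)^2] - \frac{1}{2}\EE[Z_t(y)^2]} |F''(W_t)|\right] \rho(dx)\rho(dy).
\]

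The second step is an application of Fubini--Tonelli to interchange the double integral with the expectation (the integrand is non-negative after taking $|F''(W_t)|$), followed by the observation that the double integral of the exponential factor collapses into a product:
\[
\int_D \int_D e^{Z_t(x) + Z_t(y) - \frac{1}{2}\EE[Z_t(x)^2] - \frac{1}{2}\EE[Z_t(y)^2]} \rho(dx)\rho(dy)
= \left(\int_D e^{Z_t(x) - \frac{1}{2}\EE[Z_t(x)^2]} \rho(dx)\right)^{\!2} = (W_t)^2.
\]
This immediately yields $|\varphi'(t)| \le \frac{C}{2}\,\EE[(W_t)^2 |F''(W_t)|]$, and then integrating the bound over $t \in [0,1]$ and using $|\varphi(1) - \varphi(0)| \le \int_0^1 |\varphi'(t)|\,dt$ produces the stated consequence.

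There is essentially no obstacle here beyond technical bookkeeping: the polynomial growth assumption on $F$ together with standard moment bounds on the GMC-like quantity $W_t$ (recall that $X, Y$ are continuous centred Gaussian fields so $Z_t$ is continuous and the exponentials have finite moments of all orders) ensures that all integrals and expectations are finite and Fubini applies. The only step worth flagging is that one should pass $|F''|$ through the $\rho \otimes \rho$ integration via Tonelli on the non-negative integrand, which is legitimate precisely because we have taken absolute values of everything in the integrand.
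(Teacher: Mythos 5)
Your proof is correct and is exactly the intended argument: the paper states \Cref{cor:interpolate} without proof, treating it as an immediate consequence of \eqref{eq:Kahane_int}, and your derivation (absolute values, the uniform bound $C$, Tonelli, and the collapse of the double $\rho\otimes\rho$ integral of the exponential weight into $(W_t)^2$) is precisely that routine verification.
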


The next result is a generalised criterion for the existence of moments of GMC.
\begin{lem}\label{lem:Seiberg}
Let $\gamma \in (0, \sqrt{2d})$, $Q = \frac{\gamma}{2} + \frac{d}{\gamma}$, $\alpha \in [0, Q)$ and $B(0, r) \subset D$. Then
\begin{align}\label{eq:Seiberg}
\EE \left[ \left(\int_{|x| \le r} |x|^{-\gamma \alpha} M_{\gamma}(dx)\right)^s \right] < \infty
\end{align}

\noindent if $s < \frac{2d}{\gamma^2} \wedge \frac{2}{\gamma}(Q - \alpha)$. In particular
\begin{align*}
\EE \left[ \left(\int_{|x| \le r} M_{\gamma}(dx)\right)^s \right] &< \infty, \qquad \forall s < \frac{2d}{\gamma^2}, \\
\text{and} \qquad \EE \left[ \left(\int_{|x| \le r} |x|^{-\gamma^2} M_{\gamma}(dx)\right)^s \right] &< \infty, \qquad \forall s < \frac{2d}{\gamma^2} - 1.
\end{align*}

\end{lem}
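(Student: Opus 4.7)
My plan is to reduce the problem to the case of a pure log kernel via Kahane's convexity principle and then exploit exact scale invariance through a dyadic decomposition. First, using \Cref{lem:pd_exact} we can find some $L \in \RR$ such that the $L$-exact kernel $K_L(x,y) = -\log|x-y| + L$ is positive definite on $B(0,r)$ and, depending on whether $s \ge 1$ or $s \le 1$, either pointwise dominates or is dominated by the covariance of $X$ on $B(0,r) \times B(0,r)$. Applying the comparison inequality \eqref{eq:Gcomp} to $F(t) = t^s$ (which is convex for $s \ge 1$ and concave for $0 < s \le 1$) bounds the $s$-th moment of interest by the analogous expression for the GMC of the $L$-exact field. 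Since such a field equals in law the sum of a field with the exact kernel $K_0$ and an independent $\Na(0,L)$ random variable, the two GMCs differ by multiplication by $e^{\gamma N - \gamma^2 L/2}$ with $N \sim \Na(0,L)$ independent of $M_\gamma^{K_0}$, and the corresponding $s$-th moments differ by the finite factor $\EE[e^{s\gamma N - s\gamma^2 L/2}] = e^{s(s-1)\gamma^2 L/2}$. Hence it suffices to prove the bound assuming the exact kernel.

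For the exact kernel I decompose $B(0,r) = \bigsqcup_{n \ge 0} A_n$ with $A_n := B(0, r 2^{-n}) \setminus B(0, r 2^{-(n+1)})$ and set $X_n := \int_{A_n} |x|^{-\gamma \alpha} M_\gamma(dx)$. Since $|x|^{-\gamma \alpha} \le (r 2^{-(n+1)})^{-\gamma \alpha}$ on $A_n$, we have $X_n \le (r 2^{-(n+1)})^{-\gamma\alpha} M_\gamma(B(0, r 2^{-n}))$. The exact scaling identity $M_\gamma(\lambda A) \overset{d}{=} \lambda^d e^{\gamma \Omega_\lambda - \frac{\gamma^2}{2}\EE[\Omega_\lambda^2]} \widetilde M_\gamma(A)$, with $\Omega_\lambda \sim \Na(0, -\log \lambda)$ independent of an independent copy $\widetilde M_\gamma$, yields
\begin{align*}
\EE\bigl[M_\gamma(B(0, r 2^{-n}))^s\bigr] = 2^{-n \xi(s)} \EE\bigl[M_\gamma(B(0,r))^s\bigr], \qquad \xi(s) := s\left(d + \tfrac{\gamma^2}{2}\right) - \tfrac{s^2 \gamma^2}{2},
\end{align*}
while the classical Kahane moment bound gives $\EE[M_\gamma(B(0,r))^s] < \infty$ thanks to $s < 2d/\gamma^2$. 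Consequently $\EE[X_n^s]^{1/s} \lesssim 2^{n\gamma\alpha - n\xi(s)/s} = 2^{-n\gamma(Q - \alpha - s\gamma/2)}$.

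It remains to sum over $n$. For $s \le 1$ I use subadditivity $(\sum_n X_n)^s \le \sum_n X_n^s$, and for $s > 1$ Minkowski's inequality gives $\EE[(\sum_n X_n)^s]^{1/s} \le \sum_n \EE[X_n^s]^{1/s}$. In both cases, the resulting geometric series converges precisely when $s\gamma/2 < Q - \alpha$, i.e.\ $s < 2(Q - \alpha)/\gamma$, which combined with $s < 2d/\gamma^2$ yields \eqref{eq:Seiberg}. The main technical subtlety is iterating the exact scale invariance down to arbitrarily small balls: this is taken care of by first restricting to $r \le r_d$ via \Cref{lem:pd_exact}, with the contribution of any outer annulus $r_d \le |x| \le r$ handled separately by bounding $|x|^{-\gamma\alpha}$ by a constant and invoking the standard finite-moment bound on $B(0,r)$.
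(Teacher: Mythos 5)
The paper states this lemma as a standard fact without proof, so there is nothing to compare against directly. Your argument is essentially the classical one and is correct in all its essentials: Kahane comparison to reduce to an $L$-exact kernel, then a dyadic annular decomposition around the singularity $\{0\}$, exact scale invariance to compute $\EE[X_n^s]^{1/s} \lesssim 2^{-n\gamma(Q - \alpha - s\gamma/2)}$, and summation via subadditivity ($s \le 1$) or Minkowski ($s > 1$). The exponent arithmetic checks out: $\xi(s)/s - \gamma\alpha = \gamma(Q - \alpha - s\gamma/2)$, so the geometric series converges precisely under $s < \frac{2}{\gamma}(Q-\alpha)$, and the extra hypothesis $s < \frac{2d}{\gamma^2}$ is what makes $\EE[M_\gamma(B(0,r))^s] < \infty$.

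Two technical points are stated slightly too casually, though neither is fatal. First, the passage from the $K_L$-field to the $K_0$-field by ``adding an independent $\Na(0,L)$'' is only literally available when $L \ge 0$; for $L < 0$ (which is the relevant regime when $s \le 1$ and $\inf f < 0$) one instead writes $Y_0 \overset{d}{=} Y_L + N$ with $N \sim \Na(0,-L)$ and solves, or more cleanly uses the coordinate rescaling $K_L(x,y) = -\log|e^{-L}(x-y)|$ as \Cref{lem:pd_exact} and the paper's proof of \Cref{cor:tail_bound}(ii) do; the upshot in either direction is a finite multiplicative constant, so the conclusion is unaffected. Second, for $L < 0$ the positive-definiteness radius of $K_L$ is roughly $e^L r_d$, which can be much smaller than $r_d$, so the ``restrict to $r \le r_d$'' step should really read ``restrict to the $L$-dependent radius $r_d(L)$''. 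Your outer-annulus device handles this just as well (bound $|x|^{-\gamma\alpha}$ by a constant on the annulus and invoke $s < \frac{2d}{\gamma^2}$), but you should make explicit that the inner ball may need to shrink with $L$, not just with $r_d$.

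One further remark: the lemma as stated formally allows $s \le 0$ as well, since there is no lower bound on $s$. Your Kahane reduction covers $s \ge 1$ (convex) and $0 < s \le 1$ (concave), but not $s \le 0$, where $t \mapsto t^s$ is again convex but blows up at $0$. Negative moments of GMC are finite for all orders and require a different argument (small-ball estimates). Since the paper only ever applies the lemma with positive $s$, this omission is harmless in context, but it is worth flagging if you want the proof to cover the statement as literally written.
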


\begin{rem}
The bound on \eqref{eq:Seiberg} is uniform among the class of fields \eqref{eq:LGF_cov} with $\sup_{x, y \in D} |f(x, y)| \le C$ for some $C > 0$ by Gaussian comparison (\Cref{lem:Kahane}).
\end{rem}

\subsection{Tauberian theorem and related auxiliary results}
Let us record the classical Tauberian theorem of Karamata.
\begin{theo}[{\cite[Theorem XIII.5.3]{Fel1971}}] \label{theo:tau}
Let $f(d\cdot)$ be a non-negative measure on $\RR_+$, $F(t):= \int_0^t f(ds)$ and suppose
\begin{align*}
\widetilde{F}(\lambda) := \int_0^\infty e^{-\lambda t} f(dt)
\end{align*}

\noindent exists for $\lambda > 0$. If $L$ is slowly varying at infinity and $\rho \in [0, \infty)$, then
\begin{align}\label{eq:tau}
\widetilde{F}(\lambda) \overset{\lambda \to \infty}{\sim} \lambda^{-\rho} L(\lambda^{-1})
\qquad \Leftrightarrow \qquad
F(\epsilon) \overset{\epsilon \to 0^+}{\sim} \frac{1}{\Gamma(1+\rho)} \epsilon^\rho L(\epsilon).
\end{align}

\noindent The above is also true when we consider the asymptotics $\lambda \to 0^+$ and $\epsilon \to \infty$ instead.
\end{theo}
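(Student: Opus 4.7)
The plan is to prove the two directions separately, noting that the forward (Abelian) implication is a routine computation whereas the backward (Tauberian) implication is where the real content lies. I will focus on the asymptotics $\lambda \to \infty$ and $\epsilon \to 0^+$; the other regime is symmetric.

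For the Abelian direction, assuming $F(\epsilon) \sim \epsilon^{\rho} L(\epsilon)/\Gamma(1+\rho)$ as $\epsilon \to 0^+$, I would first integrate by parts to rewrite
\begin{align*}
\widetilde{F}(\lambda) = \lambda \int_0^\infty e^{-\lambda t} F(t)\, dt = \int_0^\infty e^{-u} F(u/\lambda)\, du,
\end{align*}
after the substitution $u = \lambda t$. Pointwise in $u$, one has $F(u/\lambda)/[\lambda^{-\rho} L(\lambda^{-1})] \to u^{\rho}/\Gamma(1+\rho)$ using the uniform convergence of slowly varying functions on compact sets of $(0,\infty)$. Potter's bounds for slowly varying functions then provide an integrable envelope of the form $C(u^{\rho-\delta} + u^{\rho+\delta})e^{-u}$, which justifies dominated convergence and yields the stated asymptotic for $\widetilde{F}(\lambda)$, since $\int_0^\infty e^{-u} u^{\rho}\, du = \Gamma(1+\rho)$.

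For the Tauberian direction, the key is that $F$ is monotone non-decreasing, which upgrades weak convergence of Laplace transforms to pointwise convergence of distribution functions at continuity points of the limit. Define the rescaled measures $F_\lambda(x) := F(x/\lambda)/[\lambda^{-\rho} L(\lambda^{-1})]$. For every fixed $s > 0$,
\begin{align*}
\int_0^\infty e^{-sx}\, dF_\lambda(x) = \frac{\widetilde{F}(s\lambda)}{\lambda^{-\rho} L(\lambda^{-1})} \xrightarrow{\lambda \to \infty} s^{-\rho},
\end{align*}
where slow variation of $L$ kills the dependence on $s$ in the slowly varying factor. The continuity theorem for Laplace transforms of measures on $[0,\infty)$ (combined with a tightness argument, which follows from monotonicity and the fact that $\widetilde{F}(\lambda)/[\lambda^{-\rho} L(\lambda^{-1})]$ stays bounded) then identifies the vague limit of the measures $dF_\lambda$ as the unique measure on $[0,\infty)$ whose Laplace transform is $s^{-\rho}$, namely $t^{\rho-1}\, dt/\Gamma(\rho)$. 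Monotonicity promotes this to pointwise convergence of $F_\lambda$ at any continuity point of $t \mapsto t^{\rho}/\Gamma(1+\rho)$; taking $t=1$ gives the desired conclusion.

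The main obstacle is making the Tauberian step self-contained, since a direct appeal to the continuity theorem for Laplace transforms still requires justifying tightness and the no-mass-at-infinity condition. Karamata's classical workaround is to bypass the general machinery by approximating $u \mapsto u^{\rho-1}\mathbf{1}_{\{u \le 1\}}$ uniformly from above and below by polynomials in $e^{-u}$ (Weierstrass approximation applied after substituting $u \mapsto -\log u$), which reduces the problem to applying the Laplace asymptotic against finite linear combinations of $e^{-k\lambda t}$. This yields both upper and lower bounds for $\int_0^{1/\lambda} dF$ that sandwich the claim, giving a clean proof. The bookkeeping of the slowly varying correction $L$ throughout — which is essentially handled by the Uniform Convergence Theorem for slowly varying functions — is where most of the technical care is required.
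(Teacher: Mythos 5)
The paper does not prove this theorem: it is quoted verbatim from Feller (Theorem XIII.5.3) as a standard classical result and used as a black box. Your proposal correctly reconstructs the canonical Karamata--Feller argument -- Abelian direction via the substitution $u=\lambda t$, uniform convergence of slowly varying functions and Potter bounds for dominated convergence; Tauberian direction via rescaled measures $dF_\lambda$, convergence of their Laplace transforms to $s^{-\rho}$, and the Weierstrass-approximation workaround to sidestep the tightness/mass-at-infinity issues that a direct appeal to the Laplace continuity theorem would leave open -- so it is correct and matches the cited source's proof, but it is reproving a quoted lemma rather than comparing against an argument the paper itself supplies.
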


Our use of \Cref{theo:tau} is summarised in the following corollary.
\begin{cor}\label{cor:tau}
Let $U$ be a non-negative random variable, $C>0$ and $p > 0$. Then
\begin{align}\label{eq:tau_rv}
\PP(U > t) \overset{t \to \infty} \sim \frac{C}{t^p}
\qquad \Leftrightarrow \qquad 
\EE \left[e^{-\lambda / U}\right] \overset{\lambda \to \infty}{\sim} \frac{C \Gamma(1+p)}{\lambda^p}.
\end{align}
\end{cor}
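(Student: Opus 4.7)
The plan is to apply Karamata's Tauberian theorem (\Cref{theo:tau}) after the natural change of variable $V := 1/U$ (with the convention $V = 0$ on the negligible event $\{U = \infty\}$, which has probability $0$ since the tail hypothesis forces $\PP(U > t) \to 0$). Let $f$ denote the distribution of $V$, regarded as a finite measure on $[0, \infty)$. Then in the notation of \Cref{theo:tau},
$$F(\epsilon) = \int_0^\epsilon f(ds) = \PP(V \le \epsilon) = \PP(U \ge 1/\epsilon), \qquad \widetilde{F}(\lambda) = \EE\!\left[e^{-\lambda V}\right] = \EE\!\left[e^{-\lambda/U}\right],$$
so both sides of \eqref{eq:tau_rv} are re-expressed in terms of the pair $(F, \widetilde{F})$.

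Next I would check that the tail condition $\PP(U > t) \sim C t^{-p}$ as $t \to \infty$ is equivalent to $F(\epsilon) \sim C\epsilon^p$ as $\epsilon \to 0^+$. The substitution $\epsilon = 1/t$ gives $\PP(V < \epsilon) = \PP(U > 1/\epsilon) \sim C\epsilon^p$, and since $V$ has at most countably many atoms, $F(\epsilon) = \PP(V \le \epsilon)$ agrees with $\PP(V < \epsilon)$ outside a countable set; a standard sandwich argument exploiting the monotonicity of $F$ (approximating any $\epsilon$ by continuity points from above and below) upgrades the asymptotic to all $\epsilon > 0$.

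Finally I would invoke \Cref{theo:tau} with $\rho = p$ and the constant (hence trivially slowly varying) function $L \equiv C\,\Gamma(1+p)$. The stated equivalence then reads
$$F(\epsilon) \sim \frac{\epsilon^p L(\epsilon)}{\Gamma(1+p)} = C\epsilon^p
\quad \Longleftrightarrow \quad
\widetilde{F}(\lambda) \sim \lambda^{-p} L(\lambda^{-1}) = \frac{C\,\Gamma(1+p)}{\lambda^p},$$
and combining with the identifications in the first paragraph yields exactly \eqref{eq:tau_rv}.

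In summary, the corollary is essentially a direct specialisation of \Cref{theo:tau} to the measure $f = \mathrm{Law}(1/U)$ with constant slowly varying function. The only point requiring slightly more than a one-line calculation is the passage between the right-continuous distribution function of $V$ and the tail function of $U$, which I expect to be the mildest of obstacles; no subtler Tauberian machinery (e.g.\ de Haan classes, as discussed in the hidden remarks on the critical case) is required here because the hypothesis provides a clean power law rather than a logarithmic correction.
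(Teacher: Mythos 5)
Your proposal is correct and follows essentially the same route as the paper's proof: set $V = 1/U$, identify $F$ and $\widetilde{F}$ in the notation of Theorem~2.10, take $L \equiv C\Gamma(1+p)$, and apply Karamata's theorem directly. The only difference is that you spell out the harmless discrepancy between $\PP(U > t)$ and $\PP(U \ge t)$ (i.e.\ between $\PP(V < \epsilon)$ and $F(\epsilon) = \PP(V \le \epsilon)$), which the paper silently absorbs into ``the claim is now immediate.''
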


\begin{proof}
Let $V = U^{-1}$. In the notation of \Cref{theo:tau}, we choose $f(ds) = \PP(V \in ds)$, $L \equiv C \Gamma(1+p)$ and $\epsilon = t^{-1}$ such that $\widetilde{F}(\lambda) = \EE \left[e^{-\lambda / U}\right]$ and $\widetilde{F}(\epsilon) = \PP(U > t)$, and our claim is now immediate.
\end{proof}

To save ourselves from repeated calculations, we shall collect a few basic estimates below. The first one concerns the Laplace transform estimate of a random variable with power-law tail.

\begin{lem}\label{lem:lap_co}
If $U$ is a non-negative random variable such that
\begin{align*}
\PP(U > t) \overset{t \to \infty}{\sim} \frac{C}{t^q}
\end{align*}

\noindent for some $C > 0$ and $q > 0$, then for any $p > 0$
\begin{align}\label{eq:lap_tail}
\EE\left[U^{-p} e^{-\lambda / U}\right] \overset{\lambda \to \infty}{\sim} \frac{q}{p+q} \frac{C\Gamma(p+q+1)}{\lambda^{p+q}}. 
\end{align}

\noindent If $\PP(U > t) \le C t^{-q}$ for all $t > 0$ instead, then there exists some $C' > 0$ such that 
\begin{align} \label{eq:lap_bound}
\EE\left[U^{-p} e^{-\lambda / U}\right]  \le \frac{C'}{\lambda^{p+q}}, \qquad \forall \lambda > 0.
\end{align}
\end{lem}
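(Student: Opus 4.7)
The plan is to reduce both statements to a direct application of Karamata's Tauberian theorem (\Cref{theo:tau}). Setting $V := 1/U$ and $F(v) := \PP(V \le v)$, the left hand side of \eqref{eq:lap_tail} is the Laplace transform $\widetilde{\mu}(\lambda) := \int_0^\infty e^{-\lambda v} d\mu(v)$ of the positive measure $\mu(dv) := v^p\, dF(v)$ on $[0,\infty)$. The hypothesis $\PP(U > t) \sim Ct^{-q}$ translates into $F(v) \sim Cv^q$ as $v \to 0^+$, and the uniform bound $\PP(U > t) \le Ct^{-q}$ translates into $F(v) \le Cv^q$ for every $v > 0$.

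For the first assertion, the task reduces to determining the small-$\epsilon$ behaviour of $\mu([0,\epsilon])$. A Lebesgue--Stieltjes integration by parts (whose boundary term at $0$ vanishes because $p>0$) yields
\begin{align*}
\mu([0,\epsilon]) = \epsilon^p F(\epsilon) - p\int_0^\epsilon v^{p-1} F(v)\, dv.
\end{align*}
The first term is asymptotic to $C\epsilon^{p+q}$. For the second, the substitution $v = \epsilon u$ rewrites the integral as $p\epsilon^p \int_0^1 u^{p-1} F(\epsilon u)\, du$, and a dominated convergence argument (with dominating function $pKu^{p+q-1}$, integrable on $[0,1]$ since $p+q > 0$, where $K$ is any constant realising the uniform bound $F(v) \le Kv^q$) produces $\frac{pC}{p+q}\epsilon^{p+q}$. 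Subtracting gives
\begin{align*}
\mu([0,\epsilon]) \overset{\epsilon \to 0^+}{\sim} \frac{qC}{p+q}\epsilon^{p+q},
\end{align*}
and \Cref{theo:tau} (applied with $\rho = p+q$ and slowly varying function $L \equiv \frac{qC}{p+q}\Gamma(1+p+q)$) immediately yields \eqref{eq:lap_tail}.

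For the uniform bound, the same integration by parts, combined with the pointwise estimate $F(v) \le Cv^q$ and the non-negativity of the subtracted term, gives $\mu([0,\epsilon]) \le \epsilon^p F(\epsilon) \le C\epsilon^{p+q}$ for all $\epsilon > 0$. Writing the Laplace transform via Fubini as
\begin{align*}
\widetilde{\mu}(\lambda) = \lambda \int_0^\infty e^{-\lambda t} \mu([0,t])\, dt \le C\lambda \int_0^\infty e^{-\lambda t} t^{p+q}\, dt = \frac{C\Gamma(p+q+1)}{\lambda^{p+q}}
\end{align*}
then establishes \eqref{eq:lap_bound} with $C' = C\Gamma(p+q+1)$.

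The only slightly delicate step is the dominated convergence in the second paragraph, because of the factor $u^{p-1}$ near $u = 0$; but the uniform bound $F(v) \le Kv^q$ needed there follows trivially from $F(v)/v^q \to C$ as $v \to 0^+$ together with $F \le 1$, so no real obstacle arises. Everything else is a mechanical invocation of the Tauberian machinery recorded in \Cref{theo:tau}.
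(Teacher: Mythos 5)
Your argument is correct, and it takes a genuinely different route from the paper. The paper's proof never invokes Karamata's theorem for this lemma: it works directly with the tail function $G(t)=\PP(U>t)$, integrates by parts on $[t_0,\infty)$ to write
\begin{align*}
\EE\bigl[U^{-p}e^{-\lambda/U}\mathds{1}_{\{U\ge t_0\}}\bigr]=t_0^{-p}e^{-\lambda/t_0}G(t_0)+\int_{t_0}^\infty e^{-\lambda/t}\Bigl(-\frac{p}{t^{p+1}}+\frac{\lambda}{t^{p+2}}\Bigr)G(t)\,dt,
\end{align*}
and then squeezes the resulting integral between explicit $\epsilon$-perturbed upper and lower bounds, using $\int_{t_0}^\infty e^{-\lambda/t}t^{-(m+2)}\,dt\sim\Gamma(1+m)\lambda^{-(m+1)}$. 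You instead pass to $V=1/U$, integrate by parts on the distribution function $F(v)=\PP(V\le v)$ near $0$, establish $\mu([0,\epsilon])\sim\frac{qC}{p+q}\epsilon^{p+q}$ for the measure $\mu(dv)=v^p\,dF(v)$, and then hand everything to \Cref{theo:tau}. The substance is similar (both hinge on integration by parts and a dominated-convergence step), but your version is more structured: it isolates the small-time behaviour of a single measure and lets the Tauberian theorem do all the $\lambda$-asymptotics, which also makes the proof of the uniform bound \eqref{eq:lap_bound} an immediate one-line Fubini. The paper's version is more elementary and self-contained in that it does not pass through Karamata, at the cost of slightly more bookkeeping with $\epsilon$. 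One small point to keep in view: the subtraction at the end of your second paragraph is legitimate only because the two leading coefficients $C$ and $\frac{pC}{p+q}$ are distinct, so the difference $\frac{qC}{p+q}\epsilon^{p+q}$ is genuinely the leading term; you implicitly use this, and it holds here since $q>0$, but it is worth saying explicitly when one subtracts two asymptotically equivalent quantities of the same order.
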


\begin{proof}
For any $t_0 > 0$, it is not difficult to see that there exists $c_0 > 0$ such that
\begin{align*}
\EE\left[U^{-p} e^{-\lambda / U} 1_{\{U \le t_0\}} \right] = O(e^{-c_0 \lambda}).
\end{align*}

\noindent For any $\epsilon > 0$, choose $t_0 > 0$ such that for all $t > t_0$ we have
\begin{align*}
\frac{C(1-\epsilon)}{t^q} \le \PP(U > t) \le \frac{C(1+\epsilon)}{t^q}.
\end{align*}

\noindent Using Fubini, we have
\begin{align*}
\EE\left[U^{-p} e^{-\lambda / U} 1_{\{U \ge t_0\}} \right]
&= \frac{1}{t_0^p}e^{-\lambda / t_0} \PP(U > t_0)
+ \int_{t_0}^\infty e^{-\lambda /t} \left(-\frac{p}{t^{p+1}} + \frac{\lambda}{t^{p+2}}\right) \PP(U > t) dt\\
&\le O(e^{-\lambda / t_0}) + C\int_{t_0}^\infty e^{-\lambda / t} \left(-\frac{p(1-\epsilon)}{t^{p+q+1}} + \frac{\lambda(1+\epsilon)}{t^{p+q+2}}\right) dt.
\end{align*}

\noindent Note that for any $m > 0$ we have
\begin{align*}
\int_{t_0}^\infty \frac{e^{-\lambda /t}}{t^{m + 2}} dt
& = \lambda^{-(1+m)} \int_0^{\lambda/ t_0} s^m e^{-s} ds
\overset{\lambda \to \infty} = (1+o(1)) \Gamma(1+m) \lambda^{-(m+1)}
\end{align*}

\noindent and therefore
\begin{align*}
\EE\left[U^{-p} e^{-\lambda / U} \right]
& \le \frac{C}{\lambda^{p+q}} \left[ -p(1-\epsilon) \Gamma(p+q) + (1+\epsilon) \Gamma(p+q+1)\right] + o(\lambda^{-(p+q)})\\
& \le  \left(\frac{Cq}{p+q} + (p+1)\epsilon\right)\frac{\Gamma(p+q+1)}{\lambda^{p+q}} + o(\lambda^{-(p+q)}).
\end{align*}

\noindent Similarly we have
\begin{align*}
\EE\left[U^{-p} e^{-\lambda / U} \right]
& \ge \left(\frac{Cq}{p+q} - (p+1)\epsilon\right)\frac{\Gamma(p+q+1)}{\lambda^{p+q}} + o(\lambda^{-(p+q)}).
\end{align*}

\noindent This means that
\begin{align*}
&\left(\frac{Cq}{p+q} - (p+1)\epsilon\right)\Gamma(p+q+1)
\le \liminf_{\lambda \to \infty} \lambda^{q+1} \EE\left[U^{-p} e^{-\lambda / U} \right]\\
& \qquad \qquad \le \limsup_{\lambda \to \infty} \lambda^{q+1} \EE\left[U^{-p} e^{-\lambda / U} \right]
\le \left(\frac{Cq}{p+q} + (p+1)\epsilon\right)\Gamma(p+q+1).
\end{align*}

\noindent Since $\epsilon > 0$ is arbitrary, we let $\epsilon \to 0^+$ and obtain \eqref{eq:lap_tail}. The claim \eqref{eq:lap_bound} is similar.
\end{proof}

We collect another Laplace transform estimate, the proof of which is similar to that of Lemma \ref{lem:lap_co} and is omitted.
\begin{lem}\label{lem:fake_tau}
If $U$ is a non-negative random variable such that
\begin{align*}
\PP(U > t) \overset{t \to \infty}{\sim} \frac{C}{t^q}
\end{align*}

\noindent for some $C > 0$ and $q > 0$, then
\begin{align}\label{eq:fake_tau}
\lim_{\lambda \to 0^+} \frac{\EE\left[U^q e^{-\lambda U}\right]}{-\log \lambda} = Cq.
\end{align}

If $\PP(U > t) \le C t^{-q}$ for all $t$ sufficiently large instead, then \eqref{eq:fake_tau} may be replaced by the statement that the limit superior is upper bounded by $Cq$.
\end{lem}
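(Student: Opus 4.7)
The plan is to adapt the Fubini / integration-by-parts strategy of \Cref{lem:lap_co} to the regime $\lambda \to 0^+$. First I would fix $\epsilon > 0$ and choose $t_0 > 0$ large enough that $(1-\epsilon) C u^{-q} \le \PP(U > u) \le (1+\epsilon) C u^{-q}$ for all $u \ge t_0$. The contribution from $\{U \le t_0\}$ is trivially controlled:
$\EE[U^q e^{-\lambda U} 1_{\{U \le t_0\}}] \le t_0^q$ uniformly in $\lambda \ge 0$, so this piece is $O(1) = o(-\log \lambda)$ as $\lambda \to 0^+$ and will not contribute to the leading order.

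For the tail, integration by parts gives
\begin{align*}
\EE[U^q e^{-\lambda U} 1_{\{U > t_0\}}]
= t_0^q e^{-\lambda t_0} \PP(U > t_0) + \int_{t_0}^\infty (q u^{q-1} - \lambda u^q) e^{-\lambda u} \PP(U > u)\, du,
\end{align*}
where the boundary term at infinity vanishes since $u^q e^{-\lambda u} \PP(U > u) \to 0$ for any $\lambda > 0$. Splitting the integrand into its two natural pieces, the $\lambda u^q$ contribution is upper bounded by $(1+\epsilon) C \lambda \int_{t_0}^\infty e^{-\lambda u} du = (1+\epsilon) C e^{-\lambda t_0} = O(1)$, and the boundary term $t_0^q e^{-\lambda t_0} \PP(U > t_0)$ is likewise $O(1)$; both collapse into $o(-\log \lambda)$.

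The only surviving piece is $q \int_{t_0}^\infty u^{-1} e^{-\lambda u} \PP(U > u)\, du$, which the sandwich bound squeezes between $(1-\epsilon) Cq \int_{t_0}^\infty u^{-1} e^{-\lambda u}\, du$ and $(1+\epsilon) Cq \int_{t_0}^\infty u^{-1} e^{-\lambda u}\, du$. The remaining integral is (after the substitution $s = \lambda u$) the exponential integral $E_1(\lambda t_0)$, and the classical asymptotic $E_1(x) = -\log x + O(1)$ as $x \to 0^+$ gives $\int_{t_0}^\infty u^{-1} e^{-\lambda u} du \sim -\log \lambda$. Combining all the estimates,
\begin{align*}
(1-\epsilon) Cq \le \liminf_{\lambda \to 0^+} \frac{\EE[U^q e^{-\lambda U}]}{-\log \lambda} \le \limsup_{\lambda \to 0^+} \frac{\EE[U^q e^{-\lambda U}]}{-\log \lambda} \le (1+\epsilon) Cq,
\end{align*}
and letting $\epsilon \to 0^+$ delivers \eqref{eq:fake_tau}.

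The one-sided variant, where only $\PP(U > t) \le C t^{-q}$ holds eventually, is handled by discarding the matching lower bound throughout, giving the claimed limsup bound. There is no genuine obstacle; the only care needed is to verify that every auxiliary contribution stays $O(1)$ and hence vanishes against the diverging denominator $-\log \lambda$. As a sanity check, one could alternatively derive \eqref{eq:fake_tau} from Karamata's theorem (\Cref{theo:tau}) applied to the measure $u^q \PP(U \in du)$: integration by parts yields $\EE[U^q 1_{\{U \le t\}}] \sim Cq \log t$ as $t \to \infty$, which is the $\rho = 0$ critical case with slowly varying function $L(x) = Cq \log x$, and the Tauberian equivalence translates this directly into $\EE[U^q e^{-\lambda U}] \sim -Cq \log \lambda$.
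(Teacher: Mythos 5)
Your proof is correct and matches what the paper intends: the paper omits the proof of this lemma, remarking only that it is "similar to that of Lemma~\ref{lem:lap_co}," and your integration-by-parts decomposition of the tail contribution, with the $\lambda u^q$ and boundary pieces collapsing to $O(1)$ and the $qu^{q-1}$ piece yielding the exponential-integral asymptotic $\sim -\log\lambda$, is precisely that adaptation. The closing remark deriving the result as an Abelian consequence of Karamata's theorem (\Cref{theo:tau}) at $\rho = 0$ is a correct and worthwhile cross-check.
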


We also need the following elementary result, the proof of which is again skipped.
\begin{lem} \label{lem:aux}
Let $U, V$ be two non-negative random variables. Suppose there exists some $C > 0$ and $q > 0$ such that
\begin{align*}
(i) & \qquad \PP(U > t) \overset{t \to \infty}{\sim} C t^{-q}, \\
(ii) & \qquad \PP(V > t) \overset{t \to \infty}{\sim} o(t^{-p}) \qquad  \forall p > 0.
\end{align*}

\noindent Then the tail behaviour of $UV$ is given by
\begin{align*}
(iii) & \qquad \PP(UV > t) \overset{t \to \infty}{\sim} C \EE[V^q] t^{-q}. \qquad 
\end{align*}

\end{lem}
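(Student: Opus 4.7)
The statement requires $U$ and $V$ to be independent (for instance $V = 1/U$ with $U$ Pareto gives $UV = 1$ yet $\EE[V^q] > 0$); I will assume this independence throughout. The plan is to condition on $V$ and apply dominated convergence. By independence we have the identity
\begin{align*}
t^{q} \, \PP(UV > t) \;=\; \EE\!\left[ \mathbf{1}_{\{V > 0\}}\, V^{q} \cdot (t/V)^{q}\, \PP(U > t/V)\right],
\end{align*}
so the problem reduces to a limit-inside-expectation argument for the integrand $\Phi_t(V) := \mathbf{1}_{\{V>0\}}V^{q}(t/V)^{q}\PP(U>t/V)$.

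First I would verify the pointwise limit: for any fixed $V > 0$, the argument $t/V$ tends to $\infty$ as $t \to \infty$, and hypothesis (i) gives $(t/V)^{q}\PP(U>t/V) \to C$, so $\Phi_t(V) \to C \, V^{q} \mathbf{1}_{\{V>0\}}$ almost surely. Next I would produce an integrable dominant. From (i), the function $s \mapsto s^{q}\PP(U > s)$ is bounded on $[1,\infty)$ (since it tends to $C$), while on $(0,1]$ it is trivially at most $1$; thus $K_U := \sup_{s>0} s^{q}\PP(U>s) < \infty$ and $|\Phi_t(V)| \le K_U \, V^{q}$. Hypothesis (ii) implies in particular $\PP(V > t) = o(t^{-(q+1)})$, so $\EE[V^{q}] < \infty$, giving the required integrable dominant. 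Dominated convergence then yields
\begin{align*}
\lim_{t \to \infty} t^{q}\, \PP(UV > t) \;=\; C\, \EE[V^{q} \mathbf{1}_{\{V > 0\}}] \;=\; C\, \EE[V^{q}],
\end{align*}
which is precisely (iii).

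The only mildly delicate point is the uniform bound on $s \mapsto s^{q}\PP(U>s)$, but as noted this is elementary: global boundedness on $(0,\infty)$ follows by splitting at $s=1$ and using (i) on $[1,\infty)$. The remainder is routine and is presumably why the proof is omitted in the text.
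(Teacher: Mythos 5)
Your proof is correct, and since the paper omits its own proof of this lemma ("the proof of which is again skipped"), there is nothing to compare it against directly; your conditioning-plus-dominated-convergence argument is the standard route (essentially a proof of Breiman's lemma) and almost certainly what the author had in mind. You are also right that independence of $U$ and $V$ is needed and is not stated in the lemma as written in the paper; your counterexample ($V = 1/U$ with $U$ Pareto, so $UV \equiv 1$) makes this explicit, and one can check that in both places the lemma is invoked (the $L \ge 0$ case of Corollary~3.5(i), where the extra factor comes from $e^{\gamma \Na_L - \frac{\gamma^2}{2}L}$ for a Gaussian $\Na_L$ independent of the chaos) the independence does hold. All the supporting steps check out: $\sup_{s>0} s^q \PP(U>s) < \infty$ by splitting at $s=1$, and $\EE[V^q] < \infty$ from hypothesis (ii) with $p = q+1$, so the dominant $K_U V^q$ is integrable. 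The only degenerate case not discussed is $V \equiv 0$, where $\EE[V^q]=0$ and the claimed asymptotic equivalence is meaningless as stated, though the conclusion $t^q\PP(UV>t)\to 0$ still holds; this is a defect of the statement, not of your argument.
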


\begin{rem}
The converse of \Cref{lem:aux} is false: in general if we are given only conditions (ii) and (iii), we can only show that there exists some $C' > 0$ such that
\begin{align*}
\PP(U > t) \le C' t^{-q}
\end{align*}

\noindent which follows immediately from $\PP(UV > t) \ge \PP(U > t/a ) \PP(V > a)$ for any $a > 0$ such that $\PP(V>a) \ne 0$.
\end{rem}

\subsection{Random recursive equation}
Here we collect Goldie's implicit renewal theorem \cite{Gol1991} from the literature of random distributional equations.

\begin{theo} \label{theo:Goldie}
Let $M$ and $R$ be two independent non-negative random variables. Suppose there exists some $q > 0$ such that
\begin{itemize}
\item [(i)] $\EE[M^q] = 1$.
\item [(ii)] $\EE[M^q \log M] < \infty$.
\item[(iii)] The conditional law of $\log M$ given $M \ne 0$ is non-arithmetic.
\item[(iv)] $\int_0^\infty |\PP(R > t) - \PP(MR > t)| t^{q-1} dt < \infty$.
\end{itemize}

\noindent Then $\EE[M^q \log M] \in (0, \infty)$ and as $t \to \infty$,
\begin{align*}
\PP(R > t) = \frac{C}{t^{q}} + o(t^{-q})
\end{align*}

\noindent where the constant $C > 0$ is given by
\begin{align}
C &= \frac{1}{\EE[M^q \log M]} \int_0^\infty \left(\PP(R > t) - \PP(MR > t)\right) t^{q-1} dt.
\end{align}
\end{theo}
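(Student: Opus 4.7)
The plan is to reduce the statement to a direct application of the Key Renewal Theorem via a logarithmic change of variables. Setting
\[
g(u) := e^{qu} \PP(R > e^u), \qquad \eta(u) := e^{qu}\bigl[\PP(R > e^u) - \PP(MR > e^u)\bigr],
\]
and defining the measure $F$ on $\RR$ by $F(B) := \EE\bigl[M^q \mathbf{1}_{\{\log M \in B\}}\bigr]$ (with the convention $0^q = 0$, so that any atom of $M$ at $0$ is harmless), one conditions on $M$ and uses the independence of $M$ and $R$ to obtain
\[
t^q \PP(MR > t) = \int_0^\infty m^q \bigl[(t/m)^q \PP(R > t/m)\bigr] \PP(M \in dm),
\]
which after writing $t = e^u$ and $m = e^x$ becomes the renewal identity $g(u) = \eta(u) + \int_{\RR} g(u-x)\,F(dx)$. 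Hypothesis (i) ensures $F(\RR) = 1$, and the substitution $t = e^u$ turns (iv) into $\eta \in L^1(\RR)$.

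The remaining hypotheses of the Key Renewal Theorem are that $F$ is non-arithmetic (this is (iii)) and that $F$ has finite, strictly positive mean $m := \int x\,F(dx) = \EE[M^q \log M]$. Finiteness is (ii); for strict positivity, consider $\phi(s) := \EE[M^s \mathbf{1}_{M>0}]$, noting $\phi(q) = 1$, $\phi(0) = \PP(M > 0) \le 1$, and $\log \phi$ convex by H\"older. If $\PP(M>0) < 1$, convexity gives $(\log \phi)'(q) \ge q^{-1}(-\log \phi(0)) > 0$; if $\PP(M>0) = 1$ then $M \equiv 1$ a.s.\ is ruled out by (iii), so $\log \phi$ is strictly convex and non-constant on $[0,q]$, again yielding $(\log \phi)'(q) > 0$. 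Either way $m > 0$.

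The chief obstacle is upgrading the $L^1$-integrability of $\eta$ to direct Riemann integrability, which is what the Key Renewal Theorem actually requires, since $\eta$ may oscillate wildly. I would follow Goldie's smoothing trick: convolve the renewal equation with the exponential kernel $h(s) = e^{-s}\mathbf{1}_{\{s\ge 0\}}$ to obtain
\[
\tilde g(u) := (g * h)(u), \qquad \tilde g(u) = \tilde \eta(u) + \int_{\RR} \tilde g(u-x)\,F(dx), \qquad \tilde \eta := \eta * h.
\]
Now $\tilde \eta$ is absolutely continuous, $L^1$, and decaying at $\pm\infty$, hence directly Riemann integrable, so the Key Renewal Theorem gives $\tilde g(u) \to m^{-1} \int \tilde \eta = m^{-1} \int \eta$. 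To unsmooth, one first shows that $g$ is bounded (an auxiliary step using the renewal equation and $\eta \in L^1$ iteratively, as in Goldie), and then compares $g(u)$ with $\tilde g(u)$ via a Tauberian-type oscillation estimate exploiting the fact that iterates of the renewal operator cannot generate large short-scale variation of $g$ from an integrable driving term. Unwinding the definitions yields $t^q \PP(R > t) \to C$ with
\[
C \;=\; \frac{1}{\EE[M^q \log M]}\int_0^\infty \bigl[\PP(R > t) - \PP(MR > t)\bigr]\,t^{q-1}\,dt,
\]
which is the claim. The hardest part is precisely the final unsmoothing step: everything else is a bookkeeping translation between power-law tails on $\RR_+$ and additive renewal theory on $\RR$.
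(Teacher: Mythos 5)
The paper does not prove this theorem at all: it is quoted verbatim as Goldie's implicit renewal theorem (Goldie, 1991) and used as a black box, so there is no in-paper argument to compare against. Your sketch is in fact a reconstruction of Goldie's own proof, and the outline is essentially faithful: the change of variables $t = e^u$ and the tilted measure $F(dx) = \EE\bigl[M^q \mathbf{1}_{\{\log M \in dx\}}\bigr]$ do reduce the problem to the additive renewal equation $g = \eta + g * F$ on $\RR$; hypothesis (i) makes $F$ a probability measure, (iv) translates exactly into $\eta \in L^1(\RR)$, and your convexity argument for $\phi(s) = \EE[M^s\mathbf{1}_{\{M>0\}}]$ (with $\phi(q)=1$, $\phi(0)\le 1$, and (iii) ruling out the degenerate linear case) is precisely how one shows $\EE[M^q\log M] \in (0,\infty)$. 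One imprecision worth flagging: you justify direct Riemann integrability of $\tilde\eta = \eta * h$ by "absolutely continuous, $L^1$, decaying at $\pm\infty$," but continuity plus $L^1$ plus decay does not imply dRi in general; what actually closes this is the identity $\tilde\eta' = \eta - \tilde\eta \in L^1(\RR)$, which gives $\tilde\eta$ bounded total variation, and BV together with $L^1$ does yield dRi. You correctly single out the unsmoothing step (passing from $\tilde g \to m^{-1}\!\int\eta$ back to $g \to m^{-1}\!\int\eta$) as the real difficulty and have only gestured at it; that is indeed where most of Goldie's technical effort sits, via boundedness of $g$ and an oscillation estimate exploiting the renewal structure. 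So this is a correct roadmap that stops exactly where the genuine work begins, which is consistent with the paper's own choice to defer to Goldie's paper for a proof.
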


\Cref{theo:Goldie} will be used alongside the following coupling lemma.
\begin{lem}\label{lem:coupling}
Let $U, V$ be two non-negative random variables and $q > 0$. Then
\begin{align}\label{eq:coupling}
\int_0^\infty \left| \PP(U > t) - \PP(V > t) \right| t^{q-1}dt \le \frac{1}{q} \EE \left| U^q - V^q\right|.
\end{align}

\noindent Moreover, for any coupling of $(U, V)$ such that $\EE|U^q - V^q| < \infty$,
\begin{align}\label{eq:coupling2}
\int_0^\infty \left[ \PP(U > t) - \PP(V > t) \right] t^{q-1}dt = \frac{1}{q} \EE \left[ U^q - V^q\right].
\end{align}
\end{lem}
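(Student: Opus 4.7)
\textbf{Proof plan for \Cref{lem:coupling}.}

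The plan is to exploit the layer-cake identity $W^q = q\int_0^\infty t^{q-1}\mathbf{1}_{\{W>t\}}\,dt$, valid for every non-negative random variable $W$ and every $q > 0$. Applied to $U$ and $V$ on the given probability space and subtracted, this gives, after Fubini,
\begin{align*}
\EE[U^q - V^q]
= q\int_0^\infty t^{q-1}\EE\bigl[\mathbf{1}_{\{U>t\}}-\mathbf{1}_{\{V>t\}}\bigr]\,dt
= q\int_0^\infty t^{q-1}\bigl[\PP(U>t)-\PP(V>t)\bigr]\,dt,
\end{align*}
which is exactly the equality \eqref{eq:coupling2}. The use of Fubini is justified by the assumption $\EE|U^q-V^q|<\infty$, since the layer-cake representation applied to $|U^q - V^q|$ controls the double integral in absolute value (see the next step).

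For the inequality \eqref{eq:coupling}, the key observation is that the integrand on the left depends only on the marginals, so it suffices to produce one convenient coupling on which the bound can be verified; any coupling will do. Taking absolute value inside the expectation first, one has pointwise $|\PP(U>t) - \PP(V>t)| \le \EE|\mathbf{1}_{\{U>t\}} - \mathbf{1}_{\{V>t\}}|$. Next I would use the elementary observation that
\begin{align*}
|\mathbf{1}_{\{U>t\}} - \mathbf{1}_{\{V>t\}}|
= \mathbf{1}_{\{\min(U,V)\,\le\,t\,<\,\max(U,V)\}},
\end{align*}
so that
\begin{align*}
\int_0^\infty t^{q-1}|\mathbf{1}_{\{U>t\}}-\mathbf{1}_{\{V>t\}}|\,dt
= \int_{\min(U,V)}^{\max(U,V)} t^{q-1}\,dt
= \frac{1}{q}\bigl|U^q - V^q\bigr|.
\end{align*}
Taking expectation and invoking Tonelli (everything is non-negative, so no integrability issue arises) yields \eqref{eq:coupling}.

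There is essentially no obstacle here; the only point to be careful about is the application of Fubini in the equality, which requires the finiteness hypothesis $\EE|U^q-V^q|<\infty$, whereas the inequality version is unconditional because the integrand is non-negative and Tonelli applies without any restriction.
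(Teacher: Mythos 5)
Your proof is correct, and it is a genuinely cleaner route than the paper's. The paper uses the same two key facts you do (the layer-cake identity and the pointwise observation that $|\mathbf{1}_{\{U>t\}}-\mathbf{1}_{\{V>t\}}|$ isolates the interval between $\min(U,V)$ and $\max(U,V)$), but it organizes them differently: it first proves both claims for bounded $U,V$ by comparing the distribution functions of $\max(U,V)$ and $\min(U,V)$, and then removes the boundedness by a cutoff $U_M=\min(U,M)$, $V_M=\min(V,M)$ together with monotone and dominated convergence. You bypass the truncation entirely by applying Tonelli to the non-negative integrand $t^{q-1}|\mathbf{1}_{\{U>t\}}-\mathbf{1}_{\{V>t\}}|$ and only then invoking Fubini for the signed equality once integrability is in hand; that ordering (inequality first, equality second) makes the argument self-contained. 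The one point that should be dropped is the aside that ``it suffices to produce one convenient coupling; any coupling will do'': the left-hand side of \eqref{eq:coupling} depends only on the marginals, but the right-hand side does depend on the joint law, so the inequality cannot be transferred between couplings. What you actually proved --- correctly --- is that the bound holds for the given joint distribution of $(U,V)$, which is exactly what the lemma asserts; the remark about choosing a coupling does no work and only risks confusion.
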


\begin{proof}
Suppose  $U, V$ are bounded by some constant $M > 0$. The inequality \eqref{eq:coupling} is then a simple consequence of
\begin{align*}
& |\PP(U > t) - \PP(V> t)|\\
& \qquad = \left| \PP(U > t, V > t) + \PP(U > t, V \le t) - \PP(U> t, V > t) - \PP(U \le t, V > t)\right|\\
& \qquad = \left|\PP(U > t, V \le t) -\PP(U \le t, V > t)\right|\\
& \qquad \le \PP(U > t, V \le t) +\PP(U \le t, V > t)\\
& \qquad = \PP(\max(U, V) > t) - \PP(\min(U, V) > t)
\end{align*}

\noindent combined with the fact that
\begin{align*}
\EE\left|U^q - V^q\right|
& =\EE\left[ \max(U, V)^q - \min(U, V)^q \right]\\
& = q \int_0^\infty t^{q-1} \left[\PP(\max(U, V) > t) - \PP(\min(U, V) > t)\right] dt.
\end{align*}

\noindent The equality \eqref{eq:coupling2} is trivial because $\EE[U^q], \EE[V^q]$ are all finite.

For $U, V$ that are not necessarily bounded but $\EE|U^q - V^q| < \infty$ (otherwise \eqref{eq:coupling} is trivial), we introduce a cutoff $M > 0$ and write $U_M = \min(U, M), V_M = \min(V, M)$. Then the previous discussion implies that
\begin{align*}
&\int_0^M \left| \PP(U > t) - \PP(V > t) \right| t^{q-1}dt
= \int_0^\infty \left| \PP(U_M > t) - \PP(V_M > t) \right| t^{q-1}dt \\
& \qquad \le \frac{1}{q} \EE \left| \max(U_M, V_M)^q - \min(U_M, V_M)^q\right|\\
& \qquad \le \frac{1}{q} \EE \left|(U^q - V^q) 1_{\{\max(U, V) \le M \}}\right|
+ \frac{1}{q} \EE \left[\left(M^q - \min(U, V)^q\right)1_{\{\max(U, V) \ge M\}}\right]\\
& \qquad \xrightarrow{M \to \infty} \frac{1}{q}\EE \left|U^q - V^q\right|
\end{align*}

\noindent by dominated convergence since both 
\begin{align*}
\left|(U^q - V^q) 1_{\{\max(U, V) \le M \}}\right|,
\qquad \left(M^q - \min(U, V)^q\right)1_{\{\max(U, V) \ge M\}}
\end{align*}

\noindent are bounded by $|U^q - V^q|$. We send $M \to \infty$ on the LHS of the above inequality and obtain \eqref{eq:coupling} by monotone convergence. The equality \eqref{eq:coupling2} may be proved by a similar cutoff argument.
\end{proof}

\section{Proof of Theorem \ref{theo:main}} \label{sec:proof}
This section is devoted to the proof of the tail asymptotics of subcritical GMC measures. As advertised earlier, our proof of Theorem \ref{theo:main} consists of two parts.
\begin{itemize} 
\item[1.] Tail asymptotics of reference measure (\Cref{sec:ref_measure}): we derive the leading order term of $\PP\left(\int_{|x| \le r} |x|^{-\gamma^2} \overline{M}_{\gamma, g}(dx) > t\right)$ for the chaos measure $\overline{M}_{\gamma, g}$ associated with the exact kernel. This will serve as an important estimate for the extrapolation principle as well as applications of dominated convergence.
\item[2.] Extrapolation principle (\Cref{sec:interpolate}): we explain how the estimates for certain expectations involving $\overline{M}_{\gamma, g}$ may be extended to $M_{\gamma, g}$ by Gaussian interpolation.
\end{itemize}

\noindent To conclude our proof, we shall apply a Tauberian argument and translate our intermediate results back to the desired claim concerning the tail probability of $M_{\gamma, g}(A)$.

\medskip

Let us commence with the localisation trick.
\begin{lem} \label{lem:localisation}
Let $A \subset D$ be a non-empty open subset. Then for any $t > 0$ and $\lambda > 0$,
\begin{align}
\label{eq:localisation}
\EE \left[ e^{-\lambda / M_{\gamma, g}(A)}\right]
& = \int_A \EE \left[ \frac{1}{M_{\gamma, g}(v, A)} e^{-\lambda / M_{\gamma, g}(v, A)} \right]g(v)dv, \\
\label{eq:loc_old}
\PP \left(M_{\gamma, g}(A) > t\right)
& \le \int_A \EE \left[ \frac{1}{M_{\gamma, g}(v, A)} 1_{\{M_{\gamma, g}(v, A) \ge t\}} \right]g(v)dv
\end{align}

\noindent where
\begin{align*}
M_{\gamma, g}(v, A) := \int_A \frac{e^{\gamma^2 f(x, v)} M_{\gamma, g}(dx)}{|x-v|^{\gamma^2}}.
\end{align*}
\end{lem}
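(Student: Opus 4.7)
The approach is to reduce everything to the regularised setting, where the Cameron--Martin formula for Gaussian vectors applies verbatim, and then pass to the log-correlated limit. Fix a mollifier and write $X_\epsilon = X \ast \theta_\epsilon$, a continuous centred Gaussian field whose covariance $K_\epsilon(x, y)$ converges pointwise off the diagonal to $K(x, y) = -\log|x - y| + f(x, y)$. Set $M_{\gamma, g, \epsilon}(dx) = g(x) e^{\gamma X_\epsilon(x) - \frac{\gamma^2}{2} \EE[X_\epsilon(x)^2]} dx$; then $M_{\gamma, g, \epsilon}(A) \to M_{\gamma, g}(A)$ in probability. Since $M_{\gamma, g, \epsilon}(A) > 0$ a.s., the tautology $1 = \int_A M_{\gamma, g, \epsilon}(dv)/M_{\gamma, g, \epsilon}(A)$ together with Fubini gives
\begin{align*}
\EE\!\left[ e^{-\lambda / M_{\gamma, g, \epsilon}(A)}\right]
= \int_A g(v) \, \EE\!\left[\frac{e^{\gamma X_\epsilon(v) - \frac{\gamma^2}{2}\EE[X_\epsilon(v)^2]}}{M_{\gamma, g, \epsilon}(A)} e^{-\lambda / M_{\gamma, g, \epsilon}(A)}\right] dv.
\end{align*}

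\textbf{Cameron--Martin shift.} For each $v \in A$ the weight $e^{\gamma X_\epsilon(v) - \frac{\gamma^2}{2}\EE[X_\epsilon(v)^2]}$ is a mean-one Radon--Nikodym derivative that shifts $X_\epsilon(\cdot)$ by $\gamma K_\epsilon(\cdot, v)$; under this shift, $M_{\gamma, g, \epsilon}(A)$ is replaced by the rooted chaos
\begin{align*}
M_{\gamma, g, \epsilon}(v, A) := \int_A e^{\gamma^2 K_\epsilon(x, v)} M_{\gamma, g, \epsilon}(dx),
\end{align*}
so the regularised identity reads
\begin{align*}
\EE\!\left[e^{-\lambda / M_{\gamma, g, \epsilon}(A)}\right] = \int_A g(v) \, \EE\!\left[\frac{e^{-\lambda / M_{\gamma, g, \epsilon}(v, A)}}{M_{\gamma, g, \epsilon}(v, A)}\right] dv.
\end{align*}
The function $\phi(x) = x^{-1} e^{-\lambda/x}$ extends continuously to $[0, +\infty]$ with $\phi(0) = \phi(\infty) = 0$ and is uniformly bounded by $(\lambda e)^{-1}$. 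Combined with the standard in-probability convergence of the Cameron--Martin-shifted chaos $M_{\gamma, g, \epsilon}(v, A) \to M_{\gamma, g}(v, A)$ (whose pole at $x = v$ has integrable moments by \Cref{lem:Seiberg}), bounded convergence in both the inner expectation and the $v$-integral yields \eqref{eq:localisation}.

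\textbf{Proof of \eqref{eq:loc_old} and main obstacle.} Repeating the manipulation with $\phi$ replaced by $\psi(x) = x^{-1} 1_{\{x > t\}}$ produces the regularised equality
\begin{align*}
\PP(M_{\gamma, g, \epsilon}(A) > t) = \int_A g(v) \, \EE\!\left[\frac{1_{\{M_{\gamma, g, \epsilon}(v, A) > t\}}}{M_{\gamma, g, \epsilon}(v, A)}\right] dv,
\end{align*}
whose inner integrand is bounded by $1/t$. To pass to the limit $\epsilon \to 0^+$, one first upgrades the strict indicator on the right to $1_{\{\cdot \ge t\}}$; the resulting kernel $x \mapsto x^{-1} 1_{\{x \ge t\}}$ is upper semicontinuous, so the reverse Fatou lemma combined with the in-probability convergence of the rooted chaos delivers \eqref{eq:loc_old}. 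The $\le$ rather than equality in \eqref{eq:loc_old} absorbs exactly this USC relaxation (as well as any atom of $M_{\gamma, g}(v, A)$ at $t$). The principal technical challenge is the convergence $M_{\gamma, g, \epsilon}(v, A) \to M_{\gamma, g}(v, A)$: the weight $e^{\gamma^2 K_\epsilon(x, v)}$ develops the non-integrable singularity $|x - v|^{-\gamma^2}$ in the limit, and a rigorous treatment requires either the general Cameron--Martin theory for GMC or an explicit truncation near $x = v$ based on the $L^s$-bound of \Cref{lem:Seiberg} for $s < 2d/\gamma^2 - 1$.
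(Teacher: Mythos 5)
Your proposal is correct and follows essentially the same route as the paper: regularise, apply the tautology $1=\int_A M_{\gamma,g,\epsilon}(dv)/M_{\gamma,g,\epsilon}(A)$ with Tonelli, shift by the Cameron--Martin weight $e^{\gamma X_\epsilon(v)-\frac{\gamma^2}{2}\EE[X_\epsilon(v)^2]}$ to produce the rooted chaos, and pass $\epsilon\to0^+$ by bounded convergence using that $x\mapsto x^{-1}e^{-\lambda/x}$ is bounded and continuous on $[0,\infty]$. Your upper-semicontinuity/reverse-Fatou justification for \eqref{eq:loc_old} simply spells out the step the paper waves through as ``similar'' (and explains in a footnote why one only claims an inequality), so there is no substantive difference.
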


\begin{proof}
For each $\epsilon > 0$, let $X_{\epsilon}$ be the mollified field with covariance $\EE[X_{\epsilon}(x) X_{\epsilon}(y)] = -\log \left(|x-y| \vee \epsilon \right) + f_{\epsilon}(x, y)$ where $f_{\epsilon}(x, y) \xrightarrow{\epsilon \to 0^+} f(x,y)$ pointwise (cf. \cite[Lemma 3.4]{Ber2017}). If $M_{\gamma, \epsilon}(dx)$ is the GMC associated to $X_{\epsilon}$ and $M_{\gamma, g, \epsilon}(dx) = g(x)M_{\gamma, \epsilon}(dx)$, then
\begin{align}
\EE \left[ e^{-\lambda / M_{\gamma, g}(A)}\right]
\notag & = \lim_{\epsilon \to 0^+} \EE \left[ e^{-\lambda / M_{\gamma, g, \epsilon}(A)}\right] \\
\notag & = \lim_{\epsilon \to 0^+} \EE \left[ \frac{M_{\gamma, g, \epsilon}(A)}{M_{\gamma, g, \epsilon}(A)} e^{-\lambda / M_{\gamma, g, \epsilon}(A)}\right] \\
\label{eq:loc_moll1}
& = \lim_{\epsilon \to 0^+} \int_A \EE \left[\frac{e^{\gamma X_{\epsilon}(v) - \frac{\gamma^2}{2} \EE\left[X_{\epsilon}(v)^2\right]}}{M_{\gamma, g, \epsilon}(A)} e^{-\lambda / M_{\gamma, g, \epsilon}(A)}\right] g(v) dv.
\end{align}

\noindent One may interpret $e^{\gamma X_{\epsilon}(v) - \frac{\gamma^2}{2} \EE\left[X_{\epsilon}(v)^2\right]}$ as a Radon-Nikodym derivative, and by applying the Cameron-Martin theorem, we can remove this exponential by shifting the mean of $X_{\epsilon}(\cdot)$ by $\EE\left[X_{\epsilon}(\cdot) \gamma X_{\epsilon}(v)\right] = \gamma \left(- \log\left(|\cdot - v| \vee \epsilon \right) + f(\cdot, v)\right)$, i.e.
\begin{align}\label{eq:loc_moll2}
\EE \left[\frac{e^{\gamma X_{\epsilon}(v) - \frac{\gamma^2}{2} \EE\left[X_{\epsilon}(v)^2\right]}}{M_{\gamma, g, \epsilon}(A)} e^{-\lambda / M_{\gamma, g, \epsilon}(A)}\right]
& = \EE \left[ \frac{1}{M_{\gamma, g, \epsilon}(v, A)} e^{-\lambda / M_{\gamma, g, \epsilon}(v, A)} \right]
\end{align}

\noindent where
\begin{align*}
M_{\gamma, g, \epsilon}(v, A) 
& = \int_A e^{\gamma X_{\epsilon}(x) + \gamma \EE\left[X_{\epsilon}(x) X_{\epsilon}(v)\right] - \frac{\gamma^2}{2}\EE[X_{\epsilon}(x)^2]} g(x) dx
= \int_A \frac{e^{\gamma^2 f_{\epsilon}(x, v)} M_{\gamma, g, \epsilon}(dx)}{\left(|x-v|\vee \epsilon\right)^{\gamma^2}}.
\end{align*}

\noindent Since $M_{\gamma, g, \epsilon}(v, A)$ converges to $M_{\gamma, g}(v, A)$ as $\epsilon \to 0^+$, \eqref{eq:loc_moll2} converges to the integrand in \eqref{eq:localisation}, and we can interchange the limit and integral in \eqref{eq:loc_moll1} to obtain \eqref{eq:localisation} by bounded convergence. The proof of \eqref{eq:loc_old} is similar\footnote{Indeed \eqref{eq:loc_old} is an equality if the distribution $M_{\gamma, g}(v, A)$ is continuous, but this is only proved in the special case when the covariance kernel is exact. We are happy with the inequality here because we only need the estimate $\PP(M_{\gamma, g}(A) > t) \le t^{-1} \int_A \PP(M_{\gamma, g}(v, A) \ge t) dv$ later.} and is skipped here.

\end{proof}

\subsection{The reference measure $\overline{M}_{\gamma}$} \label{sec:ref_measure}
Let $\overline{M}_{\gamma}^L$ be the GMC associated with the log-correlated field $Y_L$ with covariance $\EE[Y_L(x) Y_L(y)] = K_L(x, y) = -\log|x-y| + L$, which by Lemma \ref{lem:pd_exact} is positive definite on $B(0, r_d(L))$. We shall suppress the dependence on $L$ when we are referring to the exact kernel, i.e. $L = 0$. The main estimate in this subsection is the asymptotics of the tail probability of $\overline{M}_{\gamma}(0, r) := \int_{|x| \le r} |x|^{-\gamma^2} \overline{M}_{\gamma}(dx)$.

\begin{lem}\label{lem:ref_GMC}
There exists some constant $\overline{C}_{\gamma, d} > 0$ such that for any $r \in (0, r_d]$,
\begin{align}
\PP\left(\overline{M}_{\gamma}(0, r) > t\right) = \frac{\overline{C}_{\gamma, d}}{t^{\frac{2d}{\gamma^2} - 1}} + o(t^{-\frac{2d}{\gamma^2} +1}), \qquad t \to \infty.
\end{align}
\end{lem}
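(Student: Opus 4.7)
The plan is to apply Goldie's implicit renewal theorem (\Cref{theo:Goldie}) to $R := \overline{M}_{\gamma}(0, r_d)$, with a multiplier $M$ produced by the exact scale invariance of $K_0$. For $s \in (0, 1)$, the identity $\EE[Y(sx) Y(sy)] = -\log s + K_0(x,y)$ gives the process-level equality in distribution $\{Y(su)\}_{|u| \le r_d} \overset{d}{=} \{Y(u) + \Omega_s\}_{|u| \le r_d}$, with $\Omega_s \sim \Na(0, -\log s)$ independent of $Y$. A change of variables $x = su$ in $\overline{M}_{\gamma}(0, sr_d)$ then yields
\begin{align*}
\overline{M}_{\gamma}(0, sr_d) \overset{d}{=} M_s R', \qquad M_s := s^{d - \gamma^2/2} e^{\gamma \Omega_s},
\end{align*}
with $R' \overset{d}{=} R$ independent of $M_s$. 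A log-normal computation gives $\EE[M_s^q] = s^{q(d - \gamma^2/2) - q^2 \gamma^2/2}$, which equals $1$ precisely when $q = \frac{2d}{\gamma^2} - 1$; a similar calculation yields $\EE[M_s^q \log M_s] = (\frac{\gamma^2}{2} - d) \log s > 0$ for $s \in (0, 1)$, and $\log M_s$ is Gaussian hence non-arithmetic.

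The main step is verifying condition (iv) of \Cref{theo:Goldie}: $\int_0^\infty |\PP(R > t) - \PP(M_s R > t)| t^{q-1} dt < \infty$ (with $M_s, R$ independent on the RHS). The idea is to use \Cref{lem:coupling} with the natural coupling on a single probability space: set $U = R = \overline{M}_{\gamma}(0, r_d)$ and $V = \overline{M}_{\gamma}(0, sr_d)$, so that $V \overset{d}{=} M_s R'$. The difference $Q := U - V = \int_{sr_d < |x| \le r_d} |x|^{-\gamma^2} \overline{M}_{\gamma}(dx)$ is supported on an annulus where the weight $|x|^{-\gamma^2}$ is bounded, and by \Cref{lem:Seiberg} the annular GMC mass has finite $p$-th moment for every $p < \frac{2d}{\gamma^2} = q + 1$. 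It then remains to bound $\EE[U^q - V^q]$: when $q \le 1$, subadditivity of $x \mapsto x^q$ yields $U^q - V^q \le Q^q$, whose expectation is finite; when $q > 1$, the mean value theorem combined with $U \ge V$ gives $U^q - V^q \le q U^{q-1} Q$, and a H\"older estimate with conjugate exponents $p_1 = p_2/(p_2 - 1)$ and $p_2 \in (q, q+1)$ closes because $(q-1) p_1 < q$ guarantees $\EE[U^{(q-1) p_1}] < \infty$ via \Cref{lem:Seiberg}, while $p_2 < q + 1$ guarantees $\EE[Q^{p_2}] < \infty$.

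With (i)--(iv) in place, \Cref{theo:Goldie} yields $\PP(R > t) = \overline{C}_{\gamma, d}\, t^{-q} + o(t^{-q})$, and the signed equality \eqref{eq:coupling2} identifies
\begin{align*}
\overline{C}_{\gamma, d} = \frac{\EE[U^q - V^q]}{q\,\EE[M_s^q \log M_s]}.
\end{align*}
Positivity $\overline{C}_{\gamma, d} > 0$ is then clear: $U \ge V$ with $Q = U - V > 0$ almost surely forces the numerator to be strictly positive, and the denominator was shown above to be positive. Finally, the case of general $r \in (0, r_d]$ is reduced to $r = r_d$ by the same scale identity: $\overline{M}_{\gamma}(0, r) \overset{d}{=} M_{r/r_d} \tilde R$ with $M_{r/r_d}$ independent of $\tilde R \overset{d}{=} R$; since $\EE[M_{r/r_d}^q] = 1$ and $M_{r/r_d}$ has all finite moments, the leading tail constant is preserved by a standard application of \Cref{lem:aux}.

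The main obstacle is the borderline moment issue in condition (iv): $\EE[R^q]$ is infinite, so the naive bound $\EE|U^q - V^q| \le \EE[U^q] + \EE[V^q]$ is useless and genuine cancellation between $U^q$ and $V^q$ must be exploited. This is achieved by localising the difference to the annulus, where the singular weight $|x|^{-\gamma^2}$ is bounded and the moment criterion of \Cref{lem:Seiberg} gains enough room for the H\"older argument to close.
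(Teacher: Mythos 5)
Your proposal is correct and follows essentially the same strategy as the paper: apply Goldie's implicit renewal theorem with the log-normal multiplier produced by exact scale invariance, and verify condition (iv) via the coupling lemma applied to the natural coupling $(U, V) = (\overline{M}_\gamma(0,r), \overline{M}_\gamma(0,cr))$, bounding $\EE|U^q - V^q|$ by an annular mass through a mean-value/H\"older estimate. The only (cosmetic) differences are that you split into cases $q \le 1$ and $q > 1$ and bound with $U^{q-1}$ rather than the paper's unified $q2^q(V^{q-1}W + W^q)$, and that you settle $r$-independence of the constant directly via \Cref{lem:aux} whereas the paper defers it to the sandwiching argument in \Cref{cor:tail_bound}(i).
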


\begin{proof}
Pick $c \in (0, 1)$. Using the fact that
\begin{align*}
\left(Y(c x)\right)_{|x| \le r}
\overset{d}{=} \left(Y(x) + N_c\right)_{|x| \le r}
\end{align*}

\noindent where $N_c \sim \Na(0, -\log c)$ is an independent random variable, we see that
\begin{align}
\overline{M}_{\gamma}(0, cr)
\notag & = \int_{|x| < |cr|} e^{\gamma Y(x) - \frac{\gamma^2}{2} \EE[Y(x)^2]}\frac{dx}{|x|^{\gamma^2}}\\
\notag & = c^d\int_{|u| < |r|} e^{\gamma Y(cu) - \frac{\gamma^2}{2} \EE[Y(cu)^2]}\frac{du}{|cu|^{\gamma^2}}\\
\notag & \overset{d}{=} c^{d - \gamma^2} e^{\gamma N_c - \frac{\gamma^2}{2} \EE[N_c^2]} \int_{|u| < |r|} e^{\gamma Y(u) - \frac{\gamma^2}{2} \EE[Y(u)^2]}\frac{du}{|u|^{\gamma^2}}\\
\label{eq:scaling}
& = c^{d - \frac{\gamma^2}{2}} e^{\gamma N_c} \overline{M}_{\gamma}(0, r).
\end{align}

For convenience, set $q = \frac{2d}{\gamma^2} - 1$ and write $M = c^{d - \frac{\gamma^2}{2}} e^{\gamma N_c}= c^{\frac{\gamma^2}{2} q}e^{\gamma N_c}$ and $R = \overline{M}_{\gamma}(0, r)$. We only need to show that conditions (i) -- (iv) in \Cref{theo:Goldie} are satisfied for the desired tail behaviour. Conditions (ii) and (iii) are trivial, while
\begin{align*}
\EE \left[ M^{q}\right]
& =c^{\frac{\gamma^2}{2} q^2} c^{-\frac{\gamma^2}{2}q^2}
= 1
\end{align*}

\noindent and so condition (i) is also satisfied. If we take $U = \overline{M}_{\gamma}(0, r)$, $V = \overline{M}_{\gamma}(0, cr)$, and 
\begin{align*}
W = U - V
= \int_{|x| \in [cr, r)} e^{\gamma Y(x) - \frac{\gamma^2}{2} \EE[Y(x)^2]} \frac{dx}{|x|^{\gamma^2}}
\le |cr|^{-\gamma^2} \overline{M}_{\gamma}(B(0, r)).
\end{align*}

\noindent then
\begin{align}
\int_0^{\infty} |\PP(R > t) - \PP(MR > t)| t^{q-1} dt 
\notag & = \int_0^{\infty} |\PP(U > t) - \PP(V > t)| t^{q-1} dt \\
\notag & \le \frac{1}{q} \EE \left| (V+W)^q - V^q \right|\\
\label{eq:tail_proof}
& \le 2^{q}\EE \left[ V^{q-1} W + W^{q}\right]
\end{align}

\noindent where the first inequality follows from \Cref{lem:coupling} and the second inequality from the elementary estimate
\begin{align*}
(V+W)^q - V^q 
& \le q \max\left(V^{q-1}, (V+W)^{q-1}\right) W
\le q 2^{q} \left(V^{q-1} W + W^q \right).
\end{align*}

\noindent Since $\EE[W^{q+1-\epsilon}] < \infty$ for any $\epsilon > 0$ (in particular that $\EE[W^q] < \infty$), we have
\begin{align*}
\EE[V^{q-1} W] \le \EE \left[ V^{\frac{(q-1)(q+1 - \epsilon)}{q - \epsilon}}\right]^{1 - \frac{1}{q+1-\epsilon}} \EE\left[W^{q+1 - \epsilon}\right]^{\frac{1}{q+1 - \epsilon}} < \infty
\end{align*}

\noindent for $\epsilon$ sufficiently small so that $(q-1)(q+1-\epsilon) / (q-\epsilon) < q$. Then \eqref{eq:tail_proof} is finite and condition (iv) is also satisfied, and by \Cref{theo:Goldie} we obtain
\begin{align*}
\PP(\overline{M}_{\gamma}(0, r) > t) = \frac{\overline{C}_{\gamma, d}}{t^{q}} + o(t^{-q}).
\end{align*}

\end{proof}

We summarise various probabilistic representations of $\overline{C}_{\gamma, d}$ in the following corollary.
\begin{cor}\label{cor:prob_rep}
The constant $\overline{C}_{\gamma, d}$ has the following equivalent representations.
\begin{align}
\overline{C}_{\gamma, d}
\notag 
& = \lim_{t \to \infty} t^{\frac{2d}{\gamma^2}-1} \PP\left(\overline{M}_{\gamma}(0, r) > t\right)\\
\label{eq:coeff2}
& = \lim_{\lambda \to 0^+} \frac{1}{\frac{2d}{\gamma^2}-1} \frac{\EE\left[ \overline{M}_{\gamma}(0,r)^{\frac{2d}{\gamma^2}-1} e^{-\lambda \overline{M}_{\gamma}(0,r)}\right]}{-\log \lambda}\\
\label{eq:coeff3}
& = \frac{1}{-\frac{2}{\gamma^2}\left(d - \frac{\gamma^2}{2}\right)^2 \log c} \EE\left[\overline{M}_{\gamma}(0, r)^{\frac{2d}{\gamma^2}-1} - \overline{M}_{\gamma}(0, cr)^{\frac{2d}{\gamma^2}-1}\right], \qquad \forall c \in (0, 1).
\end{align}
\end{cor}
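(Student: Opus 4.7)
The first representation is nothing but a restatement of the conclusion of \Cref{lem:ref_GMC}. For the second, I would apply the auxiliary \Cref{lem:fake_tau} directly to $U = \overline{M}_\gamma(0, r)$ with exponent $q = \frac{2d}{\gamma^2} - 1$ and constant $C = \overline{C}_{\gamma, d}$; this yields $\lim_{\lambda \to 0^+} \EE[U^q e^{-\lambda U}]/(-\log \lambda) = Cq$, which after dividing by $q$ produces exactly \eqref{eq:coeff2}.

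For the third representation, the plan is to extract an explicit formula from Goldie's theorem. In the proof of \Cref{lem:ref_GMC}, \Cref{theo:Goldie} was applied with $M = c^{d - \gamma^2/2} e^{\gamma N_c}$ and $R$ an independent copy of $\overline{M}_\gamma(0, r)$, and Goldie's general formula then gives
\begin{equation*}
\overline{C}_{\gamma, d} = \frac{1}{\EE[M^q \log M]} \int_0^\infty \bigl(\PP(R > t) - \PP(MR > t)\bigr) t^{q-1} dt.
\end{equation*}
Since the scaling identity \eqref{eq:scaling} gives $MR \overset{d}{=} \overline{M}_\gamma(0, cr)$, the above integral depends only on the marginals, so I would switch to the natural coupling $(U, V) = (\overline{M}_\gamma(0, r), \overline{M}_\gamma(0, cr))$ built on the same probability space via the underlying field $Y$, and invoke \eqref{eq:coupling2} of \Cref{lem:coupling} to rewrite the integral as $\frac{1}{q} \EE[U^q - V^q]$.

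It then only remains to compute $\EE[M^q \log M]$. Writing $\log M = (d - \gamma^2/2)\log c + \gamma N_c$, using $\EE[M^q] = 1$, and applying the standard Gaussian identity $\EE[N_c e^{\alpha N_c}] = \alpha(-\log c)\EE[e^{\alpha N_c}]$ with $\alpha = q\gamma$, a short calculation yields $\EE[M^q \log M] = -(d - \gamma^2/2)\log c$. Consequently $q \EE[M^q \log M] = -\frac{2}{\gamma^2}(d - \gamma^2/2)^2 \log c$, which combined with the previous display produces \eqref{eq:coeff3}.

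The main subtlety I anticipate is that $\EE[U^q]$ itself is infinite (the power-law tail of \Cref{lem:ref_GMC} makes the $q$-th moment diverge logarithmically), so \eqref{eq:coupling2} cannot be applied naively. This is precisely why the natural coupling matters: the difference $U - V$ equals $W := \int_{cr \le |x| < r} |x|^{-\gamma^2} \overline{M}_\gamma(dx) \le (cr)^{-\gamma^2} \overline{M}_\gamma(B(0, r))$, whose $s$-th moments are finite for every $s < 2d/\gamma^2$ by \Cref{lem:Seiberg}. Combined with the elementary inequality $(V + W)^q - V^q \le q 2^q (V^{q-1} W + W^q)$ and H\"older's inequality (both already carried out in the proof of \Cref{lem:ref_GMC}), this ensures $\EE|U^q - V^q| < \infty$ and legitimises all the manipulations above.
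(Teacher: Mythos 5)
Your proposal is correct and follows essentially the same route as the paper: the first two representations come from \Cref{lem:ref_GMC} together with \Cref{lem:fake_tau}, and the third is obtained from Goldie's formula, the identification $MR \overset{d}{=} \overline{M}_{\gamma}(0,cr)$ via the scaling identity \eqref{eq:scaling}, \Cref{lem:coupling} to pass to the natural coupling, and a short Gaussian moment computation for $\EE[M^q\log M]$. The one thing you spell out more explicitly than the paper is the finiteness subtlety: since $\EE[U^q]=\infty$, \eqref{eq:coupling2} must be applied to the coupling built on the same field $Y$, where the difference $U-V$ is controlled and $\EE|U^q-V^q|<\infty$ (as already verified in the proof of \Cref{lem:ref_GMC}), and this observation is a correct and welcome addition rather than a departure from the paper's argument.
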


\begin{proof}
The first representation is an immediate consequence of \Cref{lem:ref_GMC}, and the second representation follows from \Cref{lem:fake_tau}. For the third representation, we recall from \Cref{theo:Goldie} and \Cref{lem:coupling} that
\begin{align*}
& \lim_{t \to \infty} t^{q} \PP\left(\overline{M}_{\gamma}(0, r) > t\right)\\
& \qquad = \frac{1}{\EE\left[c^{\frac{\gamma^2}{2}q^2} e^{\gamma q N_c} \left(\frac{\gamma^2}{2}q \log c + \gamma N_c\right) \right]}\frac{1}{q} \EE\left[\overline{M}_{\gamma}(0, r)^{q} - \overline{M}_{\gamma}(0, cr)^{q}\right]
\end{align*}

\noindent where $q = \frac{2d}{\gamma^2} - 1$ and $c \in (0, 1)$. Then it is straightforward to check that
\begin{align*}
\EE\left[c^{\frac{\gamma^2}{2}q^2} e^{\gamma q N_c} \left(\frac{\gamma^2}{2}q \log c + \gamma N_c\right) \right]
& = \frac{\gamma^2}{2} q \log c + \gamma \EE\left[\gamma q N_c^2\right]
= - \frac{\gamma^2}{2}q \log c
\end{align*}

\noindent which gives \eqref{eq:coeff3}.
\end{proof}

\begin{rem} \label{rem:prob_rep}
The fact that \eqref{eq:coeff3} holds regardless of $c \in (0,1)$ is not surprising. Indeed when $c = 2^{-N}$, we have
\begin{align*}
\EE\left[\overline{M}_{\gamma}(0, r)^{\frac{2d}{\gamma^2}-1} - \overline{M}_{\gamma}(0, cr)^{\frac{2d}{\gamma^2}-1}\right]
= \sum_{n=1}^N \EE\left[\overline{M}_{\gamma}(0, 2^{-(n-1)} r)^{\frac{2d}{\gamma^2}-1} - \overline{M}_{\gamma}(0, 2^{-n}r)^{\frac{2d}{\gamma^2}-1}\right]
\end{align*}

\noindent and the summand on the RHS does not change with $n$ because of the scaling property \eqref{eq:scaling}. The scaling property also explains why \eqref{eq:coeff3} is independent of $r \in (0, r_d)$ (as long as the exact kernel remains positive definite on $B(0,r)$).

\end{rem}

\Cref{lem:ref_GMC} has several useful implications.
\begin{cor}\label{cor:tail_bound}
The following are true.
\begin{itemize}
\item[(i)] For any $L \in \RR$ and $r \in (0, r_d(L)]$, let $\overline{M}_{\gamma}^L(0, r) = \int_{|x| \le r} |x|^{-\gamma^2} e^{\gamma^2 L} \overline{M}_{\gamma}^L(dx)$. We have, as $t \to \infty$,
\begin{align}\label{eq:L_tail}
\PP\left(\overline{M}_{\gamma}^L(0, r) > t \right) = e^{\frac{2d}{\gamma}(Q-\gamma)L}\frac{\overline{C}_{\gamma, d} }{t^{\frac{2d}{\gamma^2}-1}} + o(t^{-\frac{2d}{\gamma^2} + 1}).
\end{align}
\item[(ii)] Let $X$ be the log-correlated field in \Cref{theo:main}, and $A \subset D$ be a fixed, non-trivial open set. Then there exists some $C > 0$ independent of $v \in A$ such that
\begin{align}\label{eq:tail_bound}
\PP\left( M_{\gamma, g}(v, A) > t \right) \le \frac{C}{t^{\frac{2d}{\gamma^2} - 1}} \qquad \forall t > 0.
\end{align}
\end{itemize}
\end{cor}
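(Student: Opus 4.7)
For part (i), the $L$-exact kernel $K_L(x,y) = -\log|x-y| + L$ shares the logarithmic scaling $K_L(cx,cy) = K_L(x,y) - \log c$ with the ordinary exact kernel, so $\overline{M}_\gamma^L(0,r)$ obeys the same distributional fixed-point identity as $\overline{M}_\gamma(0,r)$ does in \Cref{lem:ref_GMC}. Goldie's implicit renewal theorem therefore applies verbatim and yields $\PP(\overline{M}_\gamma^L(0,r) > t) = C_L/t^q + o(t^{-q})$ with $q = 2d/\gamma^2-1$ for some constant $C_L > 0$. To identify $C_L$, couple $Y_L$ and $Y$ via an independent normal: for $L > 0$, $Y_L \stackrel{d}{=} Y + N$ with $N \sim \mathcal{N}(0,L)$ independent of $Y$, giving $\overline{M}_\gamma^L(0,r) \stackrel{d}{=} e^{\gamma N + \gamma^2 L/2}\overline{M}_\gamma(0,r)$. \Cref{lem:aux}(iii) together with \Cref{lem:ref_GMC} then delivers $C_L = \overline{C}_{\gamma,d}\EE[e^{q\gamma N + q\gamma^2 L/2}] = \overline{C}_{\gamma,d}e^{q(q+1)\gamma^2 L/2}$, and the arithmetic identity $q(q+1)\gamma^2/2 = \tfrac{2d}{\gamma}(Q-\gamma)$ produces the desired exponential. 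The case $L < 0$ is symmetric via the reverse coupling $Y \stackrel{d}{=} Y_L + N'$ with $N' \sim \mathcal{N}(0,-L)$ independent of $Y_L$.

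For part (ii), split $A = (A \cap B(v,r)) \cup (A \setminus B(v,r))$ for a small fixed $r > 0$. The factor $e^{\gamma^2 f(x,v)}g(x)$ is uniformly bounded on $\overline A \times \overline A$ by continuity. The far part is thus dominated by a constant multiple of $M_\gamma(A)$, whose tail $\le C/t^{q+\varepsilon}$ follows from Markov combined with \Cref{lem:Seiberg} at any $s \in (q, q+1)$. For the near part, choose $L \ge \|f\|_{L^\infty(\overline A \times \overline A)}$ with $r \le r_d(L)$, so that $K_L$ is positive definite on $B(v,r)$ and dominates $\EE[X(x)X(y)]$ pointwise; Kahane's comparison inequality applied to the convex functions $F(x) = (x-t)_+^s$ with $s \in [1,q)$ yields (using Markov and part (i))
\[
\PP(M_{\gamma,g}(v, A \cap B(v,r)) > 2t) \le \frac{\EE[(C e^{-\gamma^2 L}\overline{M}_\gamma^L(v,r) - t)_+^s]}{t^s} \le \frac{C'}{t^q},
\]
and all constants depend only on $\|f\|_\infty, \|g\|_\infty, L, r$, giving uniformity in $v$.

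The main obstacle is the range $q \le 1$ (i.e.\ $\gamma^2 \ge d$), where $\overline{M}_\gamma^L(v,r)$ has infinite mean and no $s \in [1,q)$ exists. In this regime I would instead invoke the decomposition $f = f_+ - f_-$ from \eqref{eq:f_dec}: on an enlarged probability space, introducing an independent continuous Gaussian field $\widetilde G_-$ with covariance $f_-$, the shifted field $\widetilde X := X + \widetilde G_-$ has covariance $-\log|x-y| + f_+$, and a pointwise Cameron--Martin computation at the level of mollified fields gives $M_\gamma(dx) = e^{-\gamma\widetilde G_-(x) + \gamma^2 f_-(x,x)/2} M_\gamma^{\widetilde X}(dx)$. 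Since $f_+$ is a bona fide covariance, $\widetilde X \stackrel{d}{=} \widetilde Y + G_+$ for an exact $\widetilde Y$ and an independent continuous $G_+$ with covariance $f_+$, whence $M_\gamma^{\widetilde X}(dx) \stackrel{d}{=} e^{\gamma G_+(x) - \gamma^2 f_+(x,x)/2}\overline{M}_\gamma(dx)$. Chaining these in distribution produces the stochastic domination $\int_{B(v,r)}|x-v|^{-\gamma^2}M_\gamma(dx) \stackrel{d}{\le} W \cdot \overline{M}_\gamma(v,r)$ where $W$ is a bounded factor times $\exp(\gamma\|\widetilde G_-\|_\infty + \gamma\|G_+\|_\infty)$ taken over a fixed compact neighbourhood of $A$. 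By \Cref{lem:ctsGP}(i) these suprema have Gaussian tails so $\EE[W^q] < \infty$, and \Cref{lem:aux}(iii) applied with part (i) at $L = 0$ closes the argument uniformly in $v$ for all $q > 0$.
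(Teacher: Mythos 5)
Part (i) is correct. For $L>0$ you follow the same route as the paper (couple $Y_L \overset{d}{=} Y + N$ with $N\sim\Na(0,L)$ and apply \Cref{lem:aux} together with \Cref{lem:ref_GMC}). For $L<0$ you instead run Goldie first (noting that the scaling identity underlying \Cref{lem:ref_GMC} uses only $K_L(cx,cy)=K_L(x,y)-\log c$, which holds for every $L$), and then identify $C_L$ from the reverse coupling via \Cref{lem:aux} applied to the known product; this is valid and conceptually cleaner than the paper's spatial-rescaling trick $K_L(x,y)=-\log|e^{-L}(x-y)|$, which achieves the same end. A small plus: your sign $\overline{M}_\gamma^L(0,r)\overset{d}{=}e^{\gamma N+\gamma^2 L/2}\overline{M}_\gamma(0,r)$ is actually the correct one given the $e^{\gamma^2 L}$ weight in the definition of $\overline{M}_\gamma^L(0,r)$ (the paper writes $e^{\gamma N-\gamma^2 L/2}$, a sign slip that does not propagate).

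Part (ii) has a genuine gap in the final step. Your decomposition strategy --- add an independent $\widetilde G_-$, note $X+\widetilde G_-$ has covariance $-\log|x-y|+f_+$, then peel off $G_+$ against an exact field --- is exactly the paper's idea. But the assertion that this ``chains in distribution'' to a domination $\int_{B(v,r)}|x-v|^{-\gamma^2}M_\gamma(dx)\overset{d}{\le}W\cdot\overline{M}_\gamma(v,r)$ with $W$ built from $\exp(\gamma\|\widetilde G_-\|_\infty+\gamma\|G_+\|_\infty)$ \emph{and} independent of $\overline{M}_\gamma(v,r)$ cannot be realised on a single probability space. If $\widetilde Y$, $G_+$, $\widetilde G_-$ are all independent, then $\widetilde Y+G_+-\widetilde G_-$ has covariance $-\log|x-y|+f_++f_-$, not $-\log|x-y|+f$; conversely, on the enlarged space of $(X,\widetilde G_-)$ the field $\widetilde X=X+\widetilde G_-$ is \emph{not} independent of $\widetilde G_-$ (their cross-covariance is $f_-$), so the $G_+$ that you extract from $\widetilde X$ also correlates with $\widetilde G_-$. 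Hence there is no coupling in which both suprema are simultaneously independent of the reference measure $\overline{M}_\gamma(0,r)$, and \Cref{lem:aux} --- which requires independence, as can be seen from easy counterexamples --- does not apply to your $W$. The paper circumvents this with the step $\PP(M>t)\le \PP(R_->a)^{-1}\PP(R_- M>at)$, where $R_-$ is a deterministic-in-$x$ lower bound for the $\widetilde G_-$ factor and is \emph{independent of $M$ itself} (not of $R_- M$); this converts the problematic ``product of correlated supremum and chaos'' into a probability statement about a genuine GMC of $X+\widetilde G_-$, at the cost of a multiplicative constant $\PP(R_->a)^{-1}$ which is harmless for an upper bound. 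Only \emph{after} that step does one use distributional equality with the GMC of $\widetilde Y+G_+$ and then a conditioning on the genuinely independent factor $R_+$. You will need to insert this intermediate conditioning step to make the argument rigorous. Your preliminary Kahane-comparison route is fine as an observation for $\gamma^2<d$ (and is not in the paper), but, as you note, the truncated power $x\mapsto(x-t)_+^s$ is only convex for $s\ge 1$, so it cannot cover the whole subcritical range.
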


\begin{rem}\label{rem:cor_important}
The tail \eqref{eq:L_tail} suggests how $\PP\left(M_{\gamma, g}(v, A) > t\right)$ should behave asymptotically as $t \to \infty$. As we shall see in the proof, we can pick any $r > 0$ such that $B(v, r) \subset A$ and consider instead $\PP\left(M_{\gamma, g}(v, r) > t\right)$ without changing the asymptotic behaviour. When $r$ is small, the covariance structure of $X$ looks like $-\log |x-y| + f(v, v) = K_{f(v, v)}(x, y)$ locally in $B(v, r)$ and we should expect
\begin{align} \label{eq:tail_claim}
\PP\left( M_{\gamma, g}(v, r) > t \right) \sim e^{\frac{2d}{\gamma}(Q-\gamma) f(v, v)} g(v)^{\frac{2d}{\gamma^2} - 1}\frac{\overline{C}_{\gamma, d}}{t^{\frac{2d}{\gamma^2}-1}}.
\end{align}

\noindent It is not hard to verify this claim when $f$ is the covariance of some continuous Gaussian field, but the situation becomes trickier under the setting of \Cref{theo:main} where we only assume that $f = f_+ - f_-$ is the difference of two such covariance kernels. We shall therefore not attempt to prove \eqref{eq:tail_claim} here.

\end{rem}

\begin{proof}[Proof of \Cref{cor:tail_bound}]
For convenience, let $q = \frac{2d}{\gamma^2}-1 = \frac{2}{\gamma}(Q-\gamma)$.
\begin{itemize}
\item[(i)] For any $c, \theta \in (0, 1)$, we have
\begin{align*}
& \PP\left(\overline{M}_{\gamma}^L(0, cr) > t \right)
\le \PP\left(\overline{M}_{\gamma}^L(0, r) > t \right)\\
& \qquad \le \PP\left(\overline{M}_{\gamma}^L(0, cr) > (1-\theta) t \right)
+ \PP\left(\overline{M}_{\gamma}^L(0, B(0,r) \setminus B(0, cr)) > \theta t \right)
\end{align*}

\noindent where $\overline{M}_{\gamma}^L(0, A) := \int_{A} |x|^{-\gamma^2} e^{\gamma^2 L} \overline{M}_{\gamma}^L(dx)$. Since
\begin{align*}
\EE \left[\overline{M}_{\gamma}^L(0, B(0,r) \setminus B(0, cr))^p \right]
\le (cr)^{-p\gamma^2} \EE \left[\overline{M}_{\gamma}^L(B(0, r))^p \right] < \infty \qquad \forall p < \frac{2d}{\gamma^2},
\end{align*}

\noindent the tail probability of the random variable $\overline{M}_{\gamma}^L(0, B(0,r) \setminus B(0, cr))$ decays faster than $t^{-q}$ as $t \to \infty$ by Markov's inequality, and therefore
\begin{align*}
& \liminf_{t\to\infty} t^{q}\PP\left(\overline{M}_{\gamma}^L(0, cr) > t \right)
\le \liminf_{t \to \infty} t^{q}\PP\left(\overline{M}_{\gamma}^L(0, r) > t \right)\\
& \qquad \le \limsup_{t \to \infty} t^{q}\PP\left(\overline{M}_{\gamma}^L(0, r) > t \right)
\le \limsup_{t \to \infty} t^{q}\PP\left(\overline{M}_{\gamma}^L(0, cr) > (1-\theta) t \right).
\end{align*}

\noindent As $\theta \in (0, 1)$ is arbitrary, if $\PP\left(\overline{M}_{\gamma}(0, r) > t\right) \sim C t^{-q}$ for some $C > 0$, then $C$ must be independent of $r \in (0, r_d(L)]$. We may thus assume $r> 0$ to be as small as we like (but independent of $t$) without loss of generality.

If $L \ge 0$, we may interpret $K_L(x, y) = K_0(x, y) + L$ as the sum of the exact kernel and the variance of an independent random variable $\Na_{L} \sim \Na(0, L)$, and hence
\begin{align*}
\PP\left(\overline{M}_{\gamma}^L(0, r) > t\right)
= \PP\left(e^{\gamma \Na_L - \frac{\gamma^2}{2} L}\overline{M}_{\gamma}(0, r) > t\right)
\sim \frac{\overline{C}_{\gamma, d}\EE \left[\left(e^{\gamma \Na_L - \frac{\gamma^2}{2} L}\right)^{q}\right]}{t^q}
\end{align*}

\noindent by \Cref{lem:aux}, and $\EE \left[\left(e^{\gamma \Na_L - \frac{\gamma^2}{2} L}\right)^{q}\right] = e^{\frac{2d}{\gamma}(Q-\gamma)L}$.

If $L < 0$, we instead interpret $K_L(x, y) = - \log\left|e^{-L}(x-y)\right|$ as the exact kernel with coordinates scaled by $e^{-L}$. If we restrict ourselves to $x, y \in B(0, e^{-L} r_d)$ or equivalently $r \in (0, e^{-L}r_d]$, then
\begin{align*}
\PP\left(\overline{M}_{\gamma}^L(0, r) > t \right)
& = \PP\left(\int_{|x| \le r} |e^{-L}x|^{-\gamma^2} e^{\gamma Y(e^{-L}x) - \frac{\gamma^2}{2} \EE\left[Y(e^{-L}x )^2\right]} dx > t \right)\\
& = \PP \left( e^{dL}\overline{M}_{\gamma}(0, e^L r) > t\right)
\sim \frac{\overline{C}_{\gamma, d}e^{dqL}}{t^q}
\end{align*}

\noindent where $e^{dqL} = e^{\frac{2d}{\gamma}(Q-\gamma)L}$ as expected.
\item[(ii)] Let $r = r_d$. Then
\begin{align*}
\PP\left(M_{\gamma, g}(v, A) > t\right)
& \le \PP\left(M_{\gamma, g}(v, B(v,r) \cap D) > \frac{t}{2}\right) 
+ \PP\left(|r|^{-\gamma^2} e^{\gamma^2 L}M_{\gamma, g}(D) > \frac{t}{2}\right).
\end{align*}

\noindent Since $\EE\left[M_{\gamma, g}(D)^q\right] < \infty$ by \Cref{lem:Seiberg}, Markov's inequality implies that we only need to verify $\PP\left(M_{\gamma, g}(v, B(v,r) \cap D) > t\right) \le Ct^{-q}$ uniformly in $v$.

By (i), let $C > 0$ be such that
\begin{align*}
\PP\left( \overline{M}_{\gamma}(0, r) > t\right) \le \frac{C}{t^q} \qquad \forall t > 0.
\end{align*}

\noindent To go beyond exact kernels, we utilise the decomposition condition of $f$. Let $G_\pm(\cdot)$ be independent continuous Gaussian fields on $\overline{D}$ with covariance $f_{\pm}$, and introduce the random variables
\begin{align*}
R_+ = e^{\gamma \sup_{x \in D} G_+(x) + \gamma^2 \sup_{y, z \in D} |f(y, z)|},
\qquad R_- = e^{\gamma \inf_{x \in D} G_-(x) - \frac{\gamma^2}{2} \sup_{y \in D} |f_-(y, y)|}
\end{align*}

\noindent which possess moments of all orders by \Cref{lem:ctsGP}. Let $a > 0$ be such that
\begin{align*}
P_{R_-} := \PP(R_- > a) > 0. 
\end{align*}

\noindent Since $\EE[X(x)X(y)] + f_-(x, y) = K_0(x-v, y-v) + f_+(x, y)$, we have
\begin{align*}
& \PP(M_{\gamma, g}(v, B(v, r) \cap D) > t)
\le P_{R_-}^{-1} \PP( R_- M_{\gamma, g}(v, B(v, r) \cap D) > at)\\
& \quad \le P_{R_-}^{-1} \PP\left( \int_{B(v, r) \cap D} \frac{e^{\gamma^2 f(x, v)}e^{\gamma G_-(x) - \frac{\gamma^2}{2} \EE[G_-(x)^2]}}{|x-v|^{\gamma^2}} M_{\gamma}(dx)> \frac{at}{||g||_\infty}\right)\\
& \quad = P_{R_-}^{-1} \PP\left( \int_{B(0, r) \cap (D - v)} e^{\gamma^2 f(x+v, v)}e^{\gamma G_+(x+v) - \frac{\gamma^2}{2} \EE[G_+(x+v)^2]}\frac{\overline{M}_{\gamma}(dx)}{|x|^{\gamma^2}} > \frac{at}{||g||_\infty}\right)\\
& \quad \le P_{R_-}^{-1} \PP\left( R_+ \overline{M}_{\gamma}(0, r) > \frac{at}{||g||_\infty}\right)\\
& \quad \le P_{R_-}^{-1} \EE \left[\PP\left(\overline{M}_{\gamma}(0, r) > \frac{at}{||g||_\infty R_+} \Big| R_+\right)\right]
\le  P_{R_-}^{-1}\frac{ C (||g||_\infty/a)^q  \EE\left[R_+^q \right]}{t^q}.
\end{align*}

\noindent The coefficient $P_{R_-}^{-1} C (||g||_\infty/a)^{q}  \EE\left[R_+^q \right] < \infty$ is independent of $v$ so we are done.
\end{itemize}
\end{proof}

\subsection{The extrapolation principle} \label{sec:interpolate}
In this subsection we show the existence of the limit
\begin{align*}
\lim_{\lambda \to \infty} \lambda^{\frac{2d}{\gamma^2}} 
\EE\left[M_{\gamma, g}(v, A)^{-1} e^{-\lambda / M_{\gamma, g}(v, A)}\right]
\end{align*}

\noindent and establish a formula for it.

\paragraph{Step 1: removal of non-singularity.} We first show that
\begin{lem} \label{lem:rem_nonsing}
For any $r > 0$ such that $B(v, r) \subset A$, 
\begin{align} \label{eq:rem_nonsing}
\EE\left[M_{\gamma, g}(v, A)^{-1} e^{-\lambda / M_{\gamma, g}(v, A)}\right]
\overset{\lambda \to \infty}{=} \EE\left[M_{\gamma, g}(v, r)^{-1} e^{-\lambda / M_{\gamma, g}(v, r)}\right] + o(\lambda^{-\frac{2d}{\gamma^2}}).
\end{align}
\end{lem}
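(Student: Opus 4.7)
The plan is to write $U := M_{\gamma,g}(v,r)$ for the singular part and $W := M_{\gamma,g}(v, A\setminus B(v,r))$ for the non-singular remainder, so that $M_{\gamma,g}(v,A) = U + W$. The uniform bound $|x-v|^{-\gamma^2} \le r^{-\gamma^2}$ on $A\setminus B(v,r)$ yields $W \le r^{-\gamma^2}e^{\gamma^2\|f\|_\infty}M_{\gamma,g}(A)$, so by \Cref{lem:Seiberg} we have $\EE[W^s]<\infty$ for every $s < 2d/\gamma^2$. Writing $q := 2d/\gamma^2 - 1$, this in particular gives the tail $\PP(W > t) \le C_\epsilon t^{-(q+1-\epsilon)}$ for any $\epsilon > 0$, while \Cref{cor:tail_bound} supplies $\PP(U > t), \PP(U+W > t) \le Ct^{-q}$.

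Abbreviating $F(x) := x^{-1}e^{-\lambda/x}$, so that $F'(x) = (\lambda x^{-3} - x^{-2})e^{-\lambda/x}$, the fundamental theorem of calculus gives
\begin{align*}
\Delta := \EE[F(U+W) - F(U)] = \EE\left[\int_0^W F'(U+s)\,ds\right],
\end{align*}
and the goal is $|\Delta| = o(\lambda^{-(q+1)})$. Two estimates for $F'$ drive the argument: (a) the worst-case uniform bound $|F'(x)| \le C\lambda^{-2}$, obtained by optimising in $u = \lambda/x$; and (b) the averaged bound $\EE[|F'(U+s)|] \le C\lambda^{-(q+2)}$, obtained from \Cref{lem:lap_co} applied to $\EE[(U+s)^{-m}e^{-\lambda/(U+s)}]$ for $m\in\{2,3\}$, after using $U+s \le 2U$ on $\{U \ge s\}$ to reduce the estimate to a standard Laplace-type bound for $U$ at parameter $\lambda/2$.

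Fix a small $\delta > 0$ and set $K := \lambda^{1-\delta}$. The plan is then to split
\begin{align*}
|\Delta| \le \EE\left[\int_0^W |F'(U+s)|\,ds\cdot \mathbf{1}_{W \le K}\right] + \EE\left[\int_0^W |F'(U+s)|\,ds\cdot \mathbf{1}_{W > K}\right].
\end{align*}
On $\{W \le K\}$, Fubini together with estimate (b) bounds the first term by $\int_0^K \EE[|F'(U+s)|]\,ds \le CK\lambda^{-(q+2)} = C\lambda^{-(q+1+\delta)}$. On $\{W > K\}$, estimate (a) and the moment bound $\EE[W\mathbf{1}_{W > K}] \le C_\epsilon K^{-(q-\epsilon)}$ bound the second term by $C\lambda^{-2-(1-\delta)(q-\epsilon)}$ for any small $\epsilon > 0$. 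Choosing $\epsilon, \delta$ small enough that $(1-\delta)(q-\epsilon) > q - 1$ (which is easy since one only needs $\delta q + \epsilon < 1$) makes both exponents strictly exceed $q+1$, yielding $|\Delta| = o(\lambda^{-(q+1)}) = o(\lambda^{-2d/\gamma^2})$ as claimed. The case $q < 1$ (that is, $\gamma^2 > d$) is already handled by estimate (a) alone, since then $|\Delta| \le C\lambda^{-2}\EE[W] = o(\lambda^{-(q+1)})$.

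The main technical hurdle is establishing estimate (b) uniformly in $s \in [0, K]$. Since the tail of $U+s$ has an effective constant depending on $s$, \Cref{lem:lap_co} does not apply verbatim; instead one uses $U+s \le 2U$ on $\{U \ge s\}$ to invoke the lemma for $U$ with parameter $\lambda/2$, and controls the complementary contribution from $\{U < s\}$ via the boundary estimate $s^{-m}e^{-\lambda/(2s)}$, which for $s \le K = \lambda^{1-\delta}$ is super-polynomially small in $\lambda$ and therefore absorbed by the main term.
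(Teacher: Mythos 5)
Your argument is correct and takes a genuinely different route from the paper's. Both start from the decomposition $M_{\gamma,g}(v,A) = U + W$ with $U = M_{\gamma,g}(v,r)$ and the non-singular remainder $W = M_{\gamma,g}(v,A\setminus B(v,r))$, and both introduce a threshold of order $\lambda^{1-\delta}$. The paper then runs a sandwich argument: on the good event $\{U \ge \lambda^{1-\delta/4},\, W \le \lambda^{1-\delta}\}$ one has $(1+W/U)^{-1} = 1 + O(\lambda^{-3\delta/4})$, which gives matching upper and lower bounds after the complementary events are shown to contribute $o(\lambda^{-2d/\gamma^2})$; the tail $\PP(W>t)\le C t^{-2d/\gamma^2}$ used there is itself derived via the localisation inequality \eqref{eq:loc_old} together with \Cref{cor:tail_bound}. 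You instead write the difference as $\int_0^W F'(U+s)\,ds$ and control $F'$ through your pointwise bound (a) and averaged bound (b), the latter obtained from \Cref{lem:lap_co} after reducing to $U$ at parameter $\lambda/2$ via $U+s\le 2U$ on $\{U\ge s\}$, with the $\{U<s\}$ remainder bounded deterministically by $s^{-m}e^{-\lambda/(2s)}$ (super-polynomially small for $s\le\lambda^{1-\delta}$). This FTC strategy is in the same spirit as the $F''$-manipulation in the interpolation step (\Cref{lem:lap_estimate}), and has the modest economy of requiring only the moment bound $\EE[W^{2d/\gamma^2-\epsilon}]<\infty$ from \Cref{lem:Seiberg} rather than the sharper tail of $W$; the resulting $\epsilon$ loss is safely absorbed by your condition $\epsilon(1-\delta)+\delta q<1$. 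Both routes are valid; yours involves fewer case distinctions and avoids book-keeping with indicators inside the expectation, at the cost of the extra $s$-integral and the separate $\{U<s\}$ handling inside estimate (b).
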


We emphasise that the error in \eqref{eq:rem_nonsing} need not be uniform in $v$ or $r$.

\begin{proof}
Starting with the localisation inequality \eqref{eq:loc_old}, we know by the uniform bound \eqref{eq:tail_bound} from \Cref{cor:tail_bound} that
\begin{align*}
\PP(M_{\gamma, g}(A) > t) \le \int_A \frac{1}{t} \PP\left(M_{\gamma, g}(v, A) \ge t\right)g(v)dv \le \frac{C\int_A g(v)dv}{t^{\frac{2d}{\gamma^2}}}
\end{align*}

\noindent for all $t > 0$. In particular 
\begin{align*}
\PP\left(M_{\gamma, g}(v, A \setminus B(v, r)) > t\right)
\le \PP\left(|r|^{-\gamma^2} M_{\gamma, g}(A) > t\right) \le \frac{C_{r,g}}{t^{\frac{2d}{\gamma^2}}} \qquad \forall t > 0
\end{align*}

\noindent for some $C_{r, g} > 0$. 

To finish our proof we only need to show matching upper/lower bounds for \eqref{eq:rem_nonsing}. For a lower bound, pick $\delta \in (0, 1)$ and
\begin{align*}
& \EE\left[M_{\gamma, g}(v, A)^{-1} e^{-\lambda / M_{\gamma, g}(v, A)}\right]
\ge \EE\left[\frac{e^{-\lambda / M_{\gamma, g}(v, r)}}{M_{\gamma, g}(v, r) + M_{\gamma, g}(v, A \setminus B(v, r))} \right]\\
& \qquad \ge \EE\left[\frac{e^{-\lambda / M_{\gamma, g}(v, r)}}{M_{\gamma, g}(v, r)}\left(1 + \frac{\lambda^{1-\delta}}{M_{\gamma, g}(v, r)}\right)^{-1}  1_{\{M_{\gamma, g}(v, r)\} \ge \lambda^{1 - \frac{\delta}{4}}, M_{\gamma, g}(v, A \setminus B(v, r)) \le \lambda^{1-\delta}\}}\right]\\
& \qquad \ge \left(1 - \lambda^{-\frac{3\delta}{4}}\right)\EE\left[\frac{e^{-\lambda / M_{\gamma, g}(v, r)}}{M_{\gamma, g}(v, r)} 1_{\{M_{\gamma, g}(v, r)\} \ge \lambda^{1 - \frac{\delta}{4}}, M_{\gamma, g}(v, A \setminus B(v, r)) \le \lambda^{1-\delta}\}}\right]\\
& \qquad = \left(1 - \lambda^{-\frac{3\delta}{4}}\right)\Bigg\{\EE\left[\frac{e^{-\lambda / M_{\gamma, g}(v, r)}}{M_{\gamma, g}(v, r)} \right]
-\EE\left[\frac{e^{-\lambda / M_{\gamma, g}(v, r)}}{M_{\gamma, g}(v, r)}
1_{\{M_{\gamma, g}(v, r)\le \lambda^{1 - \frac{\delta}{4}}\}}\right]\\
& \qquad \qquad \qquad \qquad \qquad  - 
\EE\left[\frac{e^{-\lambda / M_{\gamma, g}(v, r)}}{M_{\gamma, g}(v, r)}1_{\{M_{\gamma, g}(v, r) \ge \lambda^{1 - \frac{\delta}{4}}, M_{\gamma, g}(v, A \setminus B(v, r)) \ge \lambda^{1-\delta}\}}\right]\Bigg\}\\
\end{align*}

\noindent where
\begin{align*}
\EE\left[\frac{e^{-\lambda / M_{\gamma, g}(v, r)}}{M_{\gamma, g}(v, r)} 1_{\{M_{\gamma, g}(v, r)\le \lambda^{1 - \frac{\delta}{4}}\}}\right] 
\le \lambda^{-(1-\delta/4)} e^{-\lambda^{3\delta / 4}} 
&= o(\lambda^{-\frac{2d}{\gamma^2}})
\end{align*}

\noindent and
\begin{align*}
& \EE\left[\frac{e^{-\lambda / M_{\gamma, g}(v, r)}}{M_{\gamma, g}(v, r)}1_{\{M_{\gamma, g}(v, r) \ge \lambda^{1 - \frac{\delta}{4}}, M_{\gamma, g}(v, A \setminus B(v, r)) \ge \lambda^{1-\delta}\}}\right]\Bigg\}\\
&\qquad \le \lambda^{-(1-\delta/4)} \PP \left( M_{\gamma, g}(v, A \setminus B(v, r)) \ge \lambda^{1-\delta}\right)
\le C_r \lambda^{-(1-\delta)\left(\frac{2d}{\gamma^2}+1\right)}
\end{align*}

\noindent and so we just pick $\delta > 0$ small enough satisfying $(1-\delta)\left(\frac{2d}{\gamma^2} + 1\right) > \frac{2d}{\gamma^2}$ for our desired lower bound. 

As for the upper bound,
\begin{align*}
& \EE\left[M_{\gamma, g}(v, A)^{-1} e^{-\lambda / M_{\gamma, g}(v, A)}\right]
= \EE\left[M_{\gamma, g}(v, A)^{-1} e^{-\frac{\lambda}{M_{\gamma, g}(v, r)}\left(1 + \frac{M_{\gamma, g}(v, A \setminus B(v,r))}{M_{\gamma, g}(v, r)}\right)^{-1}}\right]\\
&\qquad \le \EE\left[M_{\gamma, g}(v, r)^{-1} e^{-\frac{\lambda}{M_{\gamma, g}(v, r)}\left(1 + \lambda^{-\frac{3\delta}{4}}\right)^{-1}} 1_{\{M_{\gamma, g}(v, r) \ge \lambda^{1-\frac{\delta}{4}}, M_{\gamma, g}(v, A \setminus B(0,r)) \le \lambda^{1 - \delta}\}} \right]\\
&\qquad \qquad + \underbrace{e^{-\frac{\lambda^{\delta/4}}{2}}\EE\left[M_{\gamma, g}(v, A)^{-1}\right]
+ \lambda^{-(1-\delta)}\PP\left(M_{\gamma, g}(v, A \setminus B(0,r)) > \lambda^{1 - \delta}\right)}_{ = o(\lambda^{-2d/\gamma^2})}
\end{align*}

\noindent where
\begin{align*}
&\EE\left[M_{\gamma, g}(v, r)^{-1} e^{-\frac{\lambda}{M_{\gamma, g}(v, r)}\left(1 + \lambda^{-\frac{3\delta}{4}}\right)^{-1}} 1_{\{M_{\gamma, g}(v, r) \ge \lambda^{1-\frac{\delta}{4}}, M_{\gamma, g}(v, A \setminus B(0,r)) \le \lambda^{1 - \delta}\}} \right]\\
& \qquad \le \underbrace{e^{\lambda^{-\delta/2}}}_{= 1+o(1)}\EE\Bigg[M_{\gamma, g}(v, r)^{-1} e^{-\frac{\lambda}{M_{\gamma}(v, r)}} 1_{\{M_{\gamma, g}(v, r) \ge \lambda^{1-\frac{\delta}{4}}, M_{\gamma, g}(v, A \setminus B(0,r)) \le \lambda^{1 - \delta}\}} \Bigg]\\
& \qquad \le (1+o(1))\EE\Bigg[M_{\gamma, g}(v, r)^{-1} e^{-\frac{\lambda}{M_{\gamma, g}(v, r)}}\Bigg] + o(\lambda^{-\frac{2d}{\gamma^2}})
\end{align*}

\noindent where the last inequality follows from similar calculations in the proof of the lower bound. This concludes the proof of \eqref{eq:rem_nonsing}.
\end{proof}

\paragraph{Step 2: extrapolation.}
For $s \in [0,1]$, define $Z_s(x) = \sqrt{s}X(x) + \sqrt{1-s} Y_{f(v, v)}(x - v)$, $M_{\gamma}^s(dx) = e^{\gamma Z_s(x) - \frac{\gamma^2}{2} \EE[Z_s(x)^2]}dx$ and
\begin{align}\label{eq:phi_def}
M_{\gamma, g}^s(v, r) := \int_{B(v, r)}\frac{e^{\gamma^2 f(v,v)} g(v) M_{\gamma}^s(dx)}{|x-v|^{\gamma^2}},
\qquad \varphi(s) := \EE \left[ \frac{1}{M_{\gamma, g}^s(v, r)} e^{-\lambda / M_{\gamma, g}^s(v, r)}\right]
\end{align}

\noindent where $r \in (0, r_d(f(v,v))]$. Our goal is to prove the following extrapolation result.
\begin{lem}\label{lem:lap_estimate}
Suppose $v \in D$ satisfies $g(v) > 0$. Then
\begin{align}\label{eq:lap_interpolate}
\lim_{\lambda \to \infty} \lambda^{\frac{2d}{\gamma^2}} \varphi(1)
& = \lim_{\lambda \to \infty} \lambda^{\frac{2d}{\gamma^2}} \varphi(0)\\
\notag & = \Gamma \left(1+\frac{2d}{\gamma^2}\right) e^{\frac{2d}{\gamma}(Q-\gamma) f(v, v)} g(v)^{\frac{2d}{\gamma^2}-1} \frac{\frac{2}{\gamma}(Q-\gamma)}{\frac{2}{\gamma}(Q-\gamma) + 1} \overline{C}_{\gamma, d}.
\end{align}

\noindent Furthermore,
\begin{align} 
\notag & \lim_{\lambda \to \infty} \lambda^{\frac{2d}{\gamma^2}} \EE \left[M_{\gamma, g}(v, r)^{-1} e^{-\lambda / M_{\gamma, g}(v, r)}\right]\\
\label{eq:lap_estimate} 
& \qquad = \Gamma \left(1+\frac{2d}{\gamma^2}\right) e^{\frac{2d}{\gamma}(Q-\gamma) f(v, v)} g(v)^{\frac{2d}{\gamma^2}-1}\frac{\frac{2}{\gamma}(Q-\gamma)}{\frac{2}{\gamma}(Q-\gamma) + 1} \overline{C}_{\gamma, d}.
\end{align}
\end{lem}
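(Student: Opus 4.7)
The plan is to compute $\lim_\lambda \lambda^{2d/\gamma^2} \varphi(0)$ explicitly via the reference measure, transfer the asymptotic to $\varphi(1)$ using Kahane's interpolation, and finally upgrade to \eqref{eq:lap_estimate} by comparing $M_{\gamma, g}(v, r)$ and $M^1_{\gamma, g}(v, r)$ via continuity of $f$ and $g$ at $v$.

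For the first step, note that $Z_0(x) = Y_{f(v,v)}(x-v)$ is a translated $L$-exact field with $L = f(v, v)$, so a change of variable yields $M^0_{\gamma, g}(v, r) = g(v)\, \overline{M}^{f(v, v)}_\gamma(0, r)$. By \Cref{cor:tail_bound}(i),
\[
\PP(M^0_{\gamma, g}(v, r) > t) \overset{t \to \infty}{\sim} e^{\frac{2d}{\gamma}(Q - \gamma) f(v, v)} g(v)^{\frac{2d}{\gamma^2} - 1} \overline{C}_{\gamma, d}\, t^{-(\frac{2d}{\gamma^2} - 1)},
\]
and applying \Cref{lem:lap_co} with $p = 1$ and $q = \frac{2d}{\gamma^2} - 1 = \frac{2}{\gamma}(Q - \gamma)$ yields exactly the claimed value $L$ for $\lim_\lambda \lambda^{2d/\gamma^2} \varphi(0)$.

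For the interpolation step, one computes $x^2 |F''(x)| \le e^{-\lambda/x}(2 x^{-1} + 4 \lambda x^{-2} + \lambda^2 x^{-3})$ for $F(x) = x^{-1} e^{-\lambda/x}$, and observes that $\EE[Z_1(x) Z_1(y)] - \EE[Z_0(x) Z_0(y)] = f(x, y) - f(v, v)$. Applying \Cref{cor:interpolate} (to $\gamma Z_s^\epsilon$ for mollified fields and then passing to the limit) gives
\[
|\varphi(1) - \varphi(0)| \le \frac{\gamma^2 C_r}{2} \int_0^1 \EE\bigl[W_s^2 |F''(W_s)|\bigr] ds, \quad W_s := M^s_{\gamma, g}(v, r), \quad C_r := \sup_{x, y \in B(v, r)} |f(x, y) - f(v, v)|,
\]
with $C_r \to 0$ as $r \to 0$ by continuity of $f$. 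The field $Z_s$ has kernel $-\log|x-y| + f_s(x, y)$ with $f_s(x,y) = s f(x,y) + (1-s) f(v,v)$ bounded uniformly in $s$, so the argument of \Cref{cor:tail_bound}(ii) yields $\PP(W_s > t) \le C t^{-(\frac{2d}{\gamma^2} - 1)}$ uniformly in $s \in [0, 1]$. Then \Cref{lem:lap_co} with $p \in \{1, 2, 3\}$ gives $\EE[W_s^2 |F''(W_s)|] = O(\lambda^{-\frac{2d}{\gamma^2}})$ uniformly in $s$, so $|\varphi(1) - \varphi(0)| \le C_r \cdot O(\lambda^{-\frac{2d}{\gamma^2}})$. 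Since $\lim_\lambda \lambda^{2d/\gamma^2} \varphi(0) = L$ is independent of $r$ (by the scale invariance \eqref{eq:scaling}) and an analog of \Cref{lem:rem_nonsing} for $M^1_{\gamma, g}$ gives $r$-independent $\limsup/\liminf$ for $\lambda^{2d/\gamma^2} \varphi(1)$, shrinking $r \to 0$ forces $\lim_\lambda \lambda^{2d/\gamma^2} \varphi(1) = L$.

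For \eqref{eq:lap_estimate}, continuity of $f$ and $g$ at $v$ (with $g(v) > 0$) allows us to choose $r$ small enough so that $(1-\delta) M^1_{\gamma, g}(v, r) \le M_{\gamma, g}(v, r) \le (1+\delta) M^1_{\gamma, g}(v, r)$ almost surely, for any prescribed $\delta > 0$. A mean value estimate $|F(W') - F(W)| \le |W' - W| \sup |F'|$ with $|F'(\xi)| \le \xi^{-3} e^{-\lambda/\xi}(\lambda + \xi)$, combined with \Cref{lem:lap_co}, yields
\[
\bigl| \EE[F(M_{\gamma, g}(v, r))] - \varphi(1)\bigr| \le \delta \cdot O(\lambda^{-\frac{2d}{\gamma^2}}).
\]
Combined with \Cref{lem:rem_nonsing} (which makes the $\limsup/\liminf$ of $\lambda^{2d/\gamma^2} \EE[F(M_{\gamma, g}(v, r))]$ independent of $r$ for small $r$), shrinking $r$ delivers \eqref{eq:lap_estimate}. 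The principal technical obstacle is that the interpolation bound yields only $C_r \cdot O(\lambda^{-\frac{2d}{\gamma^2}})$ rather than a true $o$-bound for fixed $r$; this is overcome by exploiting the $r$-independence of the target limit and forcing convergence as $r$ shrinks.
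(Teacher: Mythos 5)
Your proposal is correct and follows essentially the same three-step structure as the paper: computing $\lim_\lambda\lambda^{2d/\gamma^2}\varphi(0)$ from \Cref{cor:tail_bound}(i) and \Cref{lem:lap_co}; transferring to $\varphi(1)$ via \Cref{cor:interpolate} with the uniform-in-$s$ tail bound, then forcing convergence by letting $r\to0$ since the limits are $r$-independent (\Cref{lem:rem_nonsing} and its analogue for $M^1_{\gamma,g}$); and finally upgrading to $M_{\gamma,g}(v,r)$ via an a.s.\ multiplicative sandwich from continuity of $f$ and $g$. The only deviation is cosmetic: in the last step the paper exploits that each factor of $F(x)=x^{-1}e^{-\lambda/x}$ has a definite monotonicity and substitutes the sandwich bounds directly into the Laplace functional, whereas you invoke a mean-value bound $|F(W')-F(W)|\le|W'-W|\cdot\sup|F'|$; both yield $\delta\cdot O(\lambda^{-2d/\gamma^2})$ and are interchangeable. (Incidentally, your inclusion of the $\gamma^2$ factor from applying Kahane's formula to the rescaled fields $\gamma Z_s$ is the careful version of an inequality the paper states with the factor absorbed into the constant; it is immaterial since the constant is killed as $\epsilon\to 0$.)
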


\begin{proof}
We first recall that the definition of $\varphi(t)$ depends on $r$ but the limits \eqref{eq:lap_interpolate}, if exist, do not because of \Cref{lem:rem_nonsing}. Also
\begin{align*}
\lim_{\lambda \to \infty} \lambda^{\frac{2d}{\gamma^2}} \varphi(0)
& = \lim_{\lambda \to \infty} \lambda^{\frac{2d}{\gamma^2}} \EE \left[\left(g(v)\overline{M}_{\gamma}^{f(v,v)}(v, r)\right)^{-1} e^{-\lambda /\left(g(v)\overline{M}_{\gamma}^{f(v,v)}(v, r)\right)}\right]\\
& = \Gamma \left(1+\frac{2d}{\gamma^2}\right) e^{\frac{2d}{\gamma}(Q-\gamma) f(v, v)} g(v)^{\frac{2d}{\gamma^2}-1} \frac{\frac{2}{\gamma}(Q-\gamma)}{\frac{2}{\gamma}(Q-\gamma) + 1} \overline{C}_{\gamma, d}
\end{align*}

\noindent by combining \Cref{cor:tail_bound} ($L = f(v, v)$) with \Cref{lem:lap_co}. From now on we shall  focus on the equality of the two limits \eqref{eq:lap_interpolate}.

For any $\epsilon > 0$ there exists some $r = r(\epsilon) \in (0, r_d(f(v, v))]$ such that
\begin{align}\label{eq:continuity}
\left| f(x, y) - f(v, v)\right| \le \epsilon 
\end{align}

\noindent for all $x, y \in B(v, r)$ by continuity. If we write $F(x) = x^{-1} e^{-\lambda / x}$, then $F''(x) = e^{-\lambda / x}\left(\frac{2}{x^{3}} - \frac{4\lambda}{x^4} + \frac{\lambda^2}{x^5}\right)$, and \Cref{cor:interpolate} yields
\begin{align}\label{eq:varphi_diff}
|\varphi(1) - \varphi(0)| 
\le \frac{\epsilon}{2} \int_0^1 \EE\left[ e^{-\lambda / M_{\gamma, g}^s(v, r)} \left(\frac{2}{M_{\gamma, g}^s(v, r)} + \frac{4\lambda}{M_{\gamma, g}^s(v, r)^2} + \frac{\lambda^2}{M_{\gamma, g}^s(v, r)^3}\right)\right]ds.
\end{align}

Going through the proof of \Cref{cor:tail_bound}(ii) again, we check that the same argument also suggests that there exists some $C > 0$ independent of $s \in [0,1]$ and $v \in D$ such that
\begin{align*}
\PP\left(M_{\gamma, g}^s(v, r) > t\right) \le \frac{C}{t^{\frac{2d}{\gamma^2}- 1}} \qquad \forall t > 0.
\end{align*}

\noindent By \Cref{lem:lap_co}, the integrand in \eqref{eq:varphi_diff} is uniformly bounded by $C' \lambda^{-\frac{2d}{\gamma^2}}$ for some $C' > 0$ which means that
\begin{align*}
\limsup_{\lambda \to \infty} \lambda^{\frac{2d}{\gamma^2}} |\varphi(1) - \varphi(0)|
\le \frac{C' \epsilon}{2}.
\end{align*}

\noindent Since $\epsilon > 0$ is arbitrary, we have $\lim_{\lambda \to \infty} \lambda^{\frac{2d}{\gamma^2}} \varphi(1) = \lim_{\lambda \to \infty} \lambda^{\frac{2d}{\gamma^2}} \varphi(0)$.

Finally, let $\epsilon, r > 0$ be chosen according to \eqref{eq:continuity} and the additional constraint that
\begin{align*}
\left|\frac{g(x)}{g(v)} - 1\right| \le \epsilon \qquad \forall x \in B(v, r)
\end{align*}

\noindent which is possible because $g(v) > 0$ and $g$ is continuous. Then
\begin{align*}
& \liminf_{\lambda \to \infty} \lambda^{\frac{2d}{\gamma^2}} \EE \left[M_{\gamma, g}(v, r)^{-1} e^{-\lambda / M_{\gamma, g}(v, r)}\right]\\
& \qquad  \ge \lim_{\lambda \to \infty} \lambda^{\frac{2d}{\gamma^2}} (1+\epsilon)^{-1} e^{-\gamma^2 \epsilon}\EE \left[M_{\gamma, g}^1(v, r)^{-1} e^{-\lambda (1+\epsilon) e^{\gamma^2 \epsilon} / M_{\gamma, g}^1(v, r)}\right]\\
& \qquad = \left((1+\epsilon)e^{\gamma^2 \epsilon}\right)^{-\left(1 + \frac{2d}{\gamma^2}\right)} \lim_{\lambda \to \infty} \lambda^{\frac{2d}{\gamma^2}} \varphi(1), \\
& \limsup_{\lambda \to \infty} \lambda^{\frac{2d}{\gamma^2}} \EE \left[M_{\gamma, g}(v, r)^{-1} e^{-\lambda / M_{\gamma, g}(v, r)}\right]\\
& \qquad \le \lim_{\lambda \to \infty} \lambda^{\frac{2d}{\gamma^2}} (1+\epsilon) e^{\gamma^2 \epsilon}\EE \left[M_{\gamma, g}^1(v, r)^{-1} e^{-\lambda (1+\epsilon)^{-1} e^{-\gamma^2 \epsilon} / M_{\gamma, g}^1(v, r)}\right]\\
& \qquad = \left((1+\epsilon)e^{\gamma^2 \epsilon}\right)^{\left(1 + \frac{2d}{\gamma^2}\right)} \lim_{\lambda \to \infty} \lambda^{\frac{2d}{\gamma^2}} \varphi(1).
\end{align*}

\noindent Given that the $\liminf$/$\limsup$ do not depend on $r$ by \Cref{lem:rem_nonsing}, $\epsilon$ can be made arbitrarily small and the claim \eqref{eq:lap_estimate} follows.
\end{proof}

\begin{proof}[Proof of \Cref{theo:main}]
By \Cref{cor:tail_bound} (ii) and \Cref{lem:lap_co}, we see that
\begin{align*}
\lambda^{\frac{2d}{\gamma^2}} \EE \left[M_{\gamma, g}(v, A)^{-1} e^{-\lambda / M_{\gamma, g}(v, A)}\right]
\le C' \qquad \forall v \in A.
\end{align*}

\noindent With an application of dominated convergence, the localisation identity \eqref{eq:localisation} yields
\begin{align*} 
& \lim_{\lambda \to \infty} \lambda^{\frac{2d}{\gamma^2}} \EE \left[ e^{-\lambda / M_{\gamma, g}(A)}\right]\\
& \qquad = \int_A \left( \lim_{\lambda\to \infty} \lambda^{\frac{2d}{\gamma^2}}\EE \left[M_{\gamma, g}(v, A)^{-1} e^{-\lambda / M_{\gamma, g}(v, A)} \right] \right) g(v)dv.
\end{align*}

\noindent We substitute the pointwise limit of the integrand from \Cref{lem:lap_estimate} and obtain 
\begin{align*}
& \lim_{\lambda \to \infty} \lambda^{\frac{2d}{\gamma^2}} \EE \left[ e^{-\lambda / M_{\gamma, g}(A)}\right] \\
& \qquad = \Gamma\left(1+ \frac{2d}{\gamma^2}\right) \left(\int_A e^{\frac{2d}{\gamma}(Q-\gamma) f(v, v)} g(v)^{\frac{2d}{\gamma^2}} dv\right) \frac{\frac{2}{\gamma}(Q-\gamma)}{\frac{2}{\gamma}(Q-\gamma) + 1} \overline{C}_{\gamma, d}.
\end{align*}

\noindent The tail asymptotics of $M_{\gamma, g}(A)$ then follows immediately from \Cref{cor:tau}.

\end{proof}

\appendix
\section{Reflection coefficient of GMC} \label{app:prob_rep}
In this appendix we explain why $\overline{C}_{\gamma, d}$ may be seen as a natural $d$-dimensional analogue of the Liouville reflection coefficients evaluated at $\gamma$. To commence with, we define $\overline{C}_{\gamma, d}(\alpha)$, which we call the reflection coefficient of GMC,  for each $\alpha \in (\frac{\gamma}{2}, Q)$ as follows.
\begin{prop}
Let $\overline{M}_{\gamma, \alpha} (0, r) = \int_{|x| \le r} |x|^{-\gamma \alpha} \overline{M}_{\gamma}(dx)$ for $\alpha \in (\frac{\gamma}{2}, Q)$. Then there exists some constant $\overline{C}_{\gamma, d}(\alpha) > 0$ independent of $r \in (0, r_d)$ such that
\begin{align}
\overline{C}_{\gamma, d}(\alpha) 
\notag & = \lim_{t \to \infty} t^{\frac{2}{\gamma}(Q - \alpha)} \PP\left(\overline{M}_{\gamma, \alpha}(0, r) > t \right)\\
\label{eq:GMC_coeff}
& = \lim_{\lambda \to 0^+} \frac{1}{\frac{2}{\gamma}(Q - \alpha)} \frac{\EE\left[ \overline{M}_{\gamma, \alpha}(0, r)^{\frac{2}{\gamma}(Q-\alpha)} e^{-\lambda \overline{M}_{\gamma, \alpha}(0, r)}\right]}{-\log \lambda}.
\end{align}
\end{prop}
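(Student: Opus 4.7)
My plan is to adapt the proof of \Cref{lem:ref_GMC} almost verbatim, since the case $\alpha = \gamma$ treated there is exactly the special case $\alpha = \gamma$ of the present statement, and the key ingredient (exact scale invariance) is insensitive to the exponent in front of the singularity. The first representation will follow from Goldie's implicit renewal theorem, and the second will follow by feeding the tail asymptotics into \Cref{lem:fake_tau}.

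The starting point is the scaling identity. For $c \in (0, 1)$, using $(Y(cx))_{|x|\le r} \overset{d}{=} (Y(x) + N_c)_{|x|\le r}$ with $N_c \sim \Na(0, -\log c)$ independent, one computes
\begin{align*}
\overline{M}_{\gamma, \alpha}(0, cr)
\overset{d}{=} c^{d - \gamma \alpha + \frac{\gamma^2}{2}} e^{\gamma N_c} \, \overline{M}_{\gamma, \alpha}(0, r)
= c^{\gamma(Q - \alpha)} e^{\gamma N_c} \, \overline{M}_{\gamma, \alpha}(0, r),
\end{align*}
so setting $M = c^{\gamma(Q-\alpha)} e^{\gamma N_c}$, $R = \overline{M}_{\gamma, \alpha}(0, r)$ and $q = \frac{2}{\gamma}(Q-\alpha)$ gives $MR \overset{d}{=} \overline{M}_{\gamma, \alpha}(0, cr)$, and a direct computation shows $\EE[M^q] = c^{\gamma q(Q-\alpha) - \frac{\gamma^2 q^2}{2}} = 1$, so conditions (i)-(iii) of \Cref{theo:Goldie} are immediate.

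The only condition requiring care is (iv). As in \Cref{lem:ref_GMC}, take the natural coupling $U = \overline{M}_{\gamma, \alpha}(0, r) = V + W$ with $V = \overline{M}_{\gamma, \alpha}(0, cr)$ and $W = \int_{cr \le |x| < r} |x|^{-\gamma \alpha} \overline{M}_\gamma(dx) \le (cr)^{-\gamma\alpha} \overline{M}_\gamma(B(0, r))$, so that by \Cref{lem:coupling}
\begin{align*}
\int_0^\infty |\PP(R > t) - \PP(MR > t)| t^{q-1}\,dt
\le \tfrac{1}{q}\EE|U^q - V^q|.
\end{align*}
For $q \le 1$ concavity gives $\EE|U^q - V^q| \le \EE[W^q]$, which is finite since $W$ inherits the integrability of $\overline{M}_\gamma(B(0,r))$, i.e.\ moments up to $2d/\gamma^2 > q$. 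For $q > 1$ one has $(V+W)^q - V^q \le q\,2^{q-1}(V^{q-1}W + W^q)$; here $\EE[W^q]$ is again finite, and $\EE[V^{q-1}W]$ is handled via H\"older's inequality: one can find $p, p'$ conjugate exponents with $(q-1)p < q$ and $p' < 2d/\gamma^2$ precisely because $q < 2d/\gamma^2$ when $\alpha > \gamma/2$ (this is the point where the lower bound on $\alpha$ enters). Moment finiteness of $V$ up to $q$ and of $W$ up to $2d/\gamma^2$ from \Cref{lem:Seiberg} then close the estimate. This is the main obstacle: for $\alpha < \gamma$ the critical exponent $q+1$ exceeds $2d/\gamma^2$, so the naive bound $\EE[V^{q-1}W] \le \EE[V^{q+1-\epsilon}]^{1-\frac{1}{q+1-\epsilon}}\EE[W^{q+1-\epsilon}]^{\frac{1}{q+1-\epsilon}}$ used in the proof of \Cref{lem:ref_GMC} breaks down, which is why H\"older with carefully chosen exponents is needed.

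With (iv) verified, \Cref{theo:Goldie} yields $\PP(\overline{M}_{\gamma,\alpha}(0,r) > t) = \overline{C}_{\gamma,d}(\alpha) t^{-q} + o(t^{-q})$, and the second representation in \eqref{eq:GMC_coeff} follows by plugging this into \Cref{lem:fake_tau} (with $q$ in place of $q$ there) and dividing by $q = \frac{2}{\gamma}(Q-\alpha)$. Positivity $\overline{C}_{\gamma,d}(\alpha) > 0$ follows from the explicit formula in \Cref{theo:Goldie}: the coupling gives $V \le U$ pathwise, so the integrand in the numerator is non-negative, and it cannot vanish identically because $\EE[R^q] = \infty$ at the critical exponent. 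Finally, independence of $\overline{C}_{\gamma, d}(\alpha)$ on $r \in (0, r_d)$ is a direct consequence of the scaling relation combined with \Cref{lem:aux}: since $\overline{M}_{\gamma,\alpha}(0, cr) \overset{d}{=} M \overline{M}_{\gamma, \alpha}(0, r)$ with $M$ having Gaussian-tailed (hence super-polynomially light) law, \Cref{lem:aux} gives $\lim t^q \PP(\overline{M}_{\gamma,\alpha}(0, cr) > t) = \EE[M^q] \lim t^q \PP(\overline{M}_{\gamma,\alpha}(0, r) > t) = \overline{C}_{\gamma,d}(\alpha)$, and chaining these relations covers the full range $(0, r_d)$.
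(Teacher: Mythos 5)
Your proof is correct and takes essentially the same route as the paper, whose own proof simply says to adapt \Cref{lem:ref_GMC} and apply \Cref{lem:fake_tau}; your scaling identity $\overline{M}_{\gamma,\alpha}(0,cr)\overset{d}{=}c^{\gamma(Q-\alpha)}e^{\gamma N_c}\overline{M}_{\gamma,\alpha}(0,r)$ and the Goldie verification carry this adaptation through correctly. The one thing you have made explicit that the paper leaves implicit is useful: the hypothesis $\alpha>\gamma/2$ is exactly what makes $q=\frac{2}{\gamma}(Q-\alpha)<\frac{2d}{\gamma^2}$, which gives the room $p'\in(q,\frac{2d}{\gamma^2})$ required by the H\"older step; one small caveat is that the specific exponent $q+1-\epsilon$ appearing in \Cref{lem:ref_GMC} does not really "break down" for $\alpha<\gamma$ — it still works provided $\epsilon$ is taken in the (nonempty) window $\bigl(q+1-\frac{2d}{\gamma^2},\,1\bigr)$ rather than "sufficiently small" — although phrasing it as a free choice of conjugate exponents, as you did, is cleaner.
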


\begin{proof}
The first equality can be obtained by a straightforward adaption of the proof of \Cref{lem:ref_GMC}, and the second equality follows from \Cref{lem:fake_tau}.
\end{proof}

We now show that $\overline{C}_{\gamma, d}(\alpha)$ coincides with the Liouville reflection coefficients\footnote{We only focus on $d=2$; for $d=1$ a similar proof shows that $\overline{C}_{\gamma, 1}$ coincides with the boundary unit volume reflection coefficient, see \cite[Section 4.3]{RV2017}.}.
\begin{prop}
When $d=2$, the reflection coefficient $\overline{C}_{\gamma, 2}(\alpha)$ of GMC is equivalent to the unit volume Liouville reflection coefficient $\overline{R}(\alpha)$ defined in \cite{RV2017}.
\end{prop}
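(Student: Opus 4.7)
The plan is to show that both $\overline{C}_{\gamma,2}(\alpha)$ and $\overline{R}(\alpha)$ arise as the leading coefficient in the tail probability of essentially the same random variable, once the $\alpha$-insertion picture of \cite{RV2017} is translated into a statement about an $\alpha$-weighted GMC of the exact kernel. The key inputs will be the localisation / Cameron--Martin trick of \Cref{lem:localisation}, the uniform tail bound of \Cref{cor:tail_bound}, and the extrapolation principle of \Cref{sec:interpolate}.

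First I would recall the characterisation of $\overline{R}(\alpha)$ from \cite{RV2017}: in $d=2$, the unit volume Liouville reflection coefficient can be written as the leading coefficient in the tail asymptotics of the total mass of a GFF-based GMC with a log-singular insertion of charge $\alpha$ at the origin. Applying a Cameron--Martin shift as in the proof of \Cref{lem:localisation}, this $\alpha$-insertion measure on a disc $\DD \subset \RR^2$ is distributionally equal to $\int_\DD |x|^{-\gamma\alpha} e^{\gamma^2 f(x,0)} M_\gamma^{\mathrm{GFF}}(dx)$, where $f(x,y)$ is the bounded continuous correction between the GFF covariance and the exact kernel $-\log|x-y|$ on $\DD$. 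Thus $\overline{R}(\alpha)$ is the leading coefficient of the $t^{-\frac{2}{\gamma}(Q-\alpha)}$-tail of this weighted integral.

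Next I would reduce to a small neighbourhood of $0$: using the same argument as in the proof of \Cref{cor:tail_bound}(i), together with the moment bound \eqref{eq:Seiberg} for $s<\frac{2}{\gamma}(Q-\alpha)$, the contribution of the complement of $B(0,r)$ is negligible for any fixed $r \in (0, r_2]$ and the tail is controlled by $\int_{B(0,r)} |x|^{-\gamma\alpha} e^{\gamma^2 f(x,0)} M_\gamma^{\mathrm{GFF}}(dx)$. Now the extrapolation principle of \Cref{sec:interpolate} (a minor adaptation of \Cref{lem:lap_estimate} where the singularity exponent $\gamma^2$ is replaced by $\gamma\alpha$, and where the Gaussian interpolation is performed between the GFF and the exact kernel) lets me swap the GFF covariance for the exact one on $B(0,r)$, at the price of an explicit multiplicative factor of the form $e^{\frac{2}{\gamma}(Q-\alpha)\cdot \alpha f(0,0)}$ coming from continuity of $f$ at the diagonal. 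Combining with the scaling identity behind \Cref{lem:ref_GMC}, one identifies this tail coefficient with $\overline{C}_{\gamma,2}(\alpha)$ multiplied by a factor that exactly cancels the ``unit volume'' normalisation of $\overline{R}(\alpha)$, proving the equality.

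The main obstacle is that the definition of $\overline{R}(\alpha)$ in \cite{RV2017} is not originally phrased as a direct tail coefficient but rather through a reflection relation for Liouville correlation functions, so the first and most delicate step is to translate that definition into the tail-probability form used above. This translation relies on the probabilistic representations of Liouville correlation functions developed in \cite{KRV2017, RZ2018} and on the DOZZ-type identities satisfied by the unit volume reflection coefficient; once it is in place, the remaining matching of constants is a direct application of the tools already developed in \Cref{sec:proof}.
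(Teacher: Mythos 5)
Your proposal takes a genuinely different route from the paper, and it also contains several misconceptions about what \cite{RV2017} provides, so it does not in its current form give a valid proof.

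The paper's argument is short and self-contained: starting from the radial/lateral decomposition of the exact-kernel GMC on the unit disc, it writes $\overline{M}_{\gamma,\alpha}(0,1) \overset{d}{=} e^{\gamma M}\Ia(L_{-M})$ where $M\sim\mathrm{Exp}(2(Q-\alpha))$, and feeds this into the Laplace representation \eqref{eq:GMC_coeff} of $\overline{C}_{\gamma,2}(\alpha)$. The $\lambda\to 0^+$ asymptotics is dominated by the event $\{M \gg 1\}$, on which $\Ia(L_{-M})\approx\Ia(\infty)$, and matching upper and lower bounds give $\overline{C}_{\gamma,2}(\alpha)=\EE\bigl[\Ia(\infty)^{\frac{2}{\gamma}(Q-\alpha)}\bigr]$, which is \emph{by definition} (equation (1.10) of \cite{RV2017}) the unit volume Liouville reflection coefficient $\overline{R}(\alpha)$.

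Your plan instead wants to \emph{invoke} the result of \cite{RV2017} that $\overline{R}(\alpha)$ is the tail coefficient of the $\alpha$-insertion GMC, and then match this with $\overline{C}_{\gamma,2}(\alpha)$ by Cameron--Martin, reduction to a small ball, and Kahane interpolation. There are several problems. First, what you call the ``delicate translation'' of $\overline{R}(\alpha)$ into a tail coefficient is not delicate at all in the relevant sense: $\overline{R}(\alpha)$ is already given a direct probabilistic definition in \cite{RV2017} as $\EE\bigl[\Ia(\infty)^{\frac{2}{\gamma}(Q-\alpha)}\bigr]$, and no detour through DOZZ identities or the machinery of \cite{KRV2017, RZ2018} is needed. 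Second, your description of the \cite{RV2017} setup as a ``GFF-based GMC'' carrying a ``bounded continuous correction $f$'' between the GFF covariance and the exact kernel is incorrect: \cite{RV2017} works on the unit disc with the covariance \emph{equal} to $-\log|x-y|$, i.e.\ $f\equiv 0$, and the random variable they study is literally $\overline{M}_{\gamma,\alpha}(0,r)$. There is therefore no Kahane interpolation to perform, no factor $e^{\frac{2}{\gamma}(Q-\alpha)\cdot\alpha f(0,0)}$ to track, and no ``unit volume normalisation'' mismatch to cancel. Third, since the tail asymptotics of $\overline{M}_{\gamma,\alpha}(0,r)$ with leading coefficient $\overline{R}(\alpha)$ is essentially the main theorem of \cite{RV2017} (the $d=2$, $f\equiv 0$ version of \Cref{theo:main}), invoking it here to identify $\overline{C}_{\gamma,2}(\alpha)$ with $\overline{R}(\alpha)$ makes the appendix non-self-contained in precisely the way the paper seeks to avoid. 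If you do wish to proceed by quotation, the honest one-line argument would be: both $\overline{C}_{\gamma,2}(\alpha)$ and (by the main theorem of \cite{RV2017}) $\overline{R}(\alpha)$ are the leading tail coefficient of the same random variable $\overline{M}_{\gamma,\alpha}(0,r)$, so they agree. The interpolation/extrapolation scaffolding you built around this statement is unnecessary and, as written, not applicable because there is no covariance correction to remove.
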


\begin{proof}
Using the notations in \cite{RV2017}, we can write 
\begin{align*}
\overline{M}_{\gamma, \alpha} (0, 1)
\overset{d}{=} e^{\gamma M} \int_{-L_{-M}}^\infty e^{\gamma \Ba_s^{\alpha}} Z_s ds
=: e^{\gamma M} \Ia(L_{-M})
\end{align*}

\noindent  where 
\begin{itemize}
\item $Z_s ds$ is the GMC associated with the lateral noise of GFF;
\item $(\Ba_s^{\alpha})_{s \in \RR}$ an independent two-sided Brownian motion with negative drift $\alpha - Q$ conditioned to stay non-positive;
\item $M$ is an independent $\mathrm{Exp}(2(Q-\alpha))$ random variable; and 
\item $L_{-M}$ is the last time $(\Ba_s^\alpha)_{s \ge 0}$ hits $-M$.
\end{itemize}

\noindent Applying \eqref{eq:GMC_coeff} and the decomposition above, we have
\begin{align*}
\overline{C}_{\gamma, 2}(\alpha)
= \lim_{\lambda \to 0^+} \frac{1}{\frac{2}{\gamma}(Q - \alpha)} \EE\left[ \Ia(L_{-M})^{\frac{2}{\gamma}(Q- \alpha)} \left(\frac{ (e^{\gamma M})^{\frac{2}{\gamma}(Q-\alpha)} e^{-\lambda e^{\gamma M} \Ia(L_{-M})}}{-\log \lambda}\right)\right].
\end{align*}

\noindent When $\lambda \to 0^+$, the above expectation is dominated by the event that the exponential variable $M$ is large, in which case $L_{-M}$ is very large and $\Ia(L_{-M})$ behaves like $\Ia(\infty)$ which does not depend on $M$. To make this rigorous we aim to prove matching upper/lower bounds. Since $\PP(e^{\gamma M} > t) = t^{-\frac{2}{\gamma}(Q - \alpha)}$ for $t \ge 1$, a straightforward computation shows that
\begin{align*}
\EE \left[ \left(e^{\gamma M}\right)^{\frac{2}{\gamma}(Q-\alpha)} e^{-\lambda e^{\gamma M}}\right]
= -\frac{2}{\gamma}(Q-\alpha) e^{-\lambda} \log \lambda + O(1)
\end{align*}

\noindent where the error $O(1)$ is bounded independently of $\lambda > 0$. Using the fact that $\Ia(\infty)$ has moments of all orders smaller than $\frac{4}{\gamma^2}$ (\cite[Lemma 2.8]{KRV2017}), we deduce that
\begin{align*}
\overline{C}_{\gamma, 2}(\alpha)
& \le \lim_{\lambda \to 0^+} \frac{1}{\frac{2}{\gamma}(Q - \alpha)} \EE\left[ \Ia(\infty)^{\frac{2}{\gamma}(Q- \alpha)} \EE\left[\left(\frac{ (e^{\gamma M})^{\frac{2}{\gamma}(Q-\alpha)} e^{-\lambda e^{\gamma M} \Ia(0)}}{-\log \lambda}\right)\bigg| \Ia(0)\right] \right]\\
& = \EE \left[ \Ia(\infty)^{\frac{2}{\gamma}(Q-\alpha)}\right]
\end{align*}

\noindent which is the desired upper bound. Now fix any $T>0$, we have
\begin{align*}
\overline{C}_{\gamma, 2}(\alpha)
& \ge \lim_{\lambda \to 0^+} \frac{1}{\frac{2}{\gamma}(Q - \alpha)} \EE\left[ \Ia(L_{-T})^{\frac{2}{\gamma}(Q- \alpha)} \EE\left[\left(\frac{ (e^{\gamma M})^{\frac{2}{\gamma}(Q-\alpha)} e^{-\lambda e^{\gamma M} \Ia(\infty)}}{-\log \lambda}\right)\bigg| \Ia(\infty)\right] \right]\\
& \qquad - \lim_{\lambda \to 0^+} \frac{1}{\frac{2}{\gamma}(Q - \alpha)} \EE\left[ \Ia(\infty)^{\frac{2}{\gamma}(Q- \alpha)}\left(\frac{ (e^{\gamma M})^{\frac{2}{\gamma}(Q-\alpha)} e^{-\lambda e^{\gamma M} \Ia(\infty)}}{-\log \lambda}\right) 1_{\{M \le T\}}\right]\\
& = \EE \left[ \Ia(L_{-T})^{\frac{2}{\gamma}(Q-\alpha)}\right].
\end{align*}

\noindent Since $T$ is arbitrary, we may send $T \to \infty$ so that $L_{-T} \to \infty$ and obtain $\overline{C}_{\gamma, 2}(\alpha) \ge \EE \left[ \Ia(\infty)^{\frac{2}{\gamma}(Q-\alpha)}\right]$. This matches our upper bound and is precisely the probabilistic definition of the Liouville reflection coefficient $\overline{R}(\alpha)$ in \cite[equation (1.10)]{RV2017}.\\

\end{proof}



\begin{thebibliography}{99}

\bibitem{APS2018}
J. Aru, E. Powell and A. Sep\'ulveda.
Critical Liouville measure as a limit of subcritical measures. 
Preprint arXiv:1802.08433.

\bibitem{BJ2014}
J. Barral and X. Jin: On exact scaling log-infinitely divisible cascades.
Probability Theory and Related Fields 160 (3-4), 521-565 (2014).

\bibitem{BKNSW2015}
J. Barral, A. Kupiainen, M. Nikula, E. Saksman and C. Webb.
Basic properties of critical lognormal multiplicative chaos. Ann. Probab., Volume 43, Number 5 (2015), 2205-2249.

\bibitem{BW2018}
G. Baverez and M.D. Wong. 
Fusion asymptotics for Liouville correlation functions. Preprint arXiv:1807.10207.

\bibitem{Bel1960}
Y.K. Belyaev.
Continuity and H\"older's conditions for sample functions of stationary Gaussian processes.
Proc. Fourth Berkeley Sympos. Math. Statist. and Prob. (Berkeley, Calif., 1960), vol. 2, Univ. of California Press, Berkeley, 1961, pp. 23--33. MR 26 \#815.

\bibitem{Ber2017}
N. Berestycki. 
An elementary approach to Gaussian multiplicative chaos. Electr. Comm. Probab., vol. 22 (2017), no.27, 1-12.

\bibitem{BerNotes}
N. Berestycki.
Introduction to the Gaussian Free Field and Liouville Quantum Gravity. Available on the author's website.

\bibitem{BWW2018}
N. Berestycki, C. Webb and M.D. Wong. 
Random Hermitian matrices and Gaussian multiplicative chaos.
Probab. Theory Relat. Fields (2018) 172:103--189 https://doi.org/10.1007/s00440-017-0806-9.

\bibitem{BGT1989}
N.H. Bingham, C.M. Goldie and J.L. Teugels. (1989). Regular variation (Vol. 27). Cambridge university press.

\bibitem{BL2014}
M. Biskup and O. Louidor. 
Conformal symmetries in the extremal process of two-dimensional discrete Gaussian Free Field.
Preprint arXiv:1410.4676.

\bibitem{BL2016}
M. Biskup and O. Louidor. 
Extreme Local Extrema of Two-Dimensional Discrete Gaussian Free Field. 
Comm. Math. Phys. 345, 271-304 (2016).

\bibitem{BL2018}
M. Biskup and O. Louidor. 
Full extremal process, cluster law and freezing for the two-dimensional discrete Gaussian Free Field. 
Advances in Mathematics 330 (2018) 589--687.

\bibitem{DKRV2016}
F. David, A. Kupiainen, R. Rhodes and V. Vargas.
Liouville Quantum Gravity on the Riemann Sphere.
Commun. Math. Phys. (2016) 342: 869. https://doi.org/10.1007/s00220-016-2572-4.


\bibitem{DKRV2017}
F. David, A. Kupiainen, R. Rhodes and V. Vargas.
Renormalizability of Liouville quantum field theory at the Seiberg bound. 
Electron. J. Probab. 22 (2017), paper no. 93, 26 pp. doi:10.1214/17-EJP113. https://projecteuclid.org/euclid.ejp/1509501716

\bibitem{DMS2014}
B. Duplantier, J. Miller and S. Sheffield.
Liouville quantum gravity as a mating of trees.
Preprint arXiv:1409.7055.

\bibitem{DRSV2014a}
B. Duplantier, R. Rhodes, S. Sheffield and V. Vargas.
Critical Gaussian multiplicative chaos: convergence of the derivative martingale.
Ann. Probab., 42(5):1769--1808, 2014.

\bibitem{DRSV2014b} 
B. Duplantier, R. Rhodes, S. Sheffield and V. Vargas. (2014)
Renormalization of Critical Gaussian Multiplicative Chaos and KPZ Relation. 
Comm. Math. Phys. 330 283--330.

\bibitem{DRV2012}
J. Duchon, R. Robert and V. Vargas.
Forecasting volatility with the multifractal random walk model.
Mathematical Finance, 22.1 (2012): 83-108.


\bibitem{DS2011}
B. Duplantier and S. Sheffield.
Liouville quantum gravity and KPZ.
Inventiones mathematicae,  August 2011, Volume 185, Issue 2, pp 333--393.

\bibitem{Fel1971}
W. Feller. 
An Introduction to Probability and Its Applications, vol. II. Wiley, New York (1971).

\bibitem{FB2008}
Y. Fyodorov and J.-P. Bouchaud : Freezing and extreme value statistics in a Random Energy Model with logarithmically correlated potential, J. Phys.A: Math.Theor 41 (2008) 372001.

\bibitem{Gol1991}
C.M. Goldie.
Implicit Renewal Theory and Tails of Solutions of Random Equations.
Ann. Appl. Probab. Volume 1, Number 1 (1991), 126-166.

\bibitem{Haa1976}
L. de Haan (1976). An Abel-Tauber Theorem for Laplace Transforms. Journal of the London Mathematical Society, s2-13(3), 537--542. \url{https://doi:10.1112/jlms/s2-13.3.537}.

\bibitem{Kah1985}
J.-P. Kahane.
Sur le chaos multiplicatif. 
Ann. Sci. Math. Qu\'ebec 9 (1985), no. 2, 105--150.

\bibitem{JS2017}
J. Junnila and E. Saksman. 
Uniqueness of critical Gaussian chaos. 
Electron. J. Probab., 22:Paper No. 11, 31, 2017.

\bibitem{JSW2018}
J. Junnila, E. Saksman and C. Webb.
Decompositions of log-correlated fields with applications.
Preprint arXiv:1808.06838.

\bibitem{KRV2015}
A. Kupiainen, R. Rhodes and V. Vargas: Local Conformal Structure of Liouville Quantum Gravity.
Preprint arXiv:1512.01802.

\bibitem{KRV2017}
A. Kupiainen, R. Rhodes and V. Vargas.
Integrability of Liouville theory: proof of the DOZZ Formula. 
Preprint arXiv:1707.08785.

\bibitem{LOS2018}
G. Lambert, D. Ostrovsky and N. Simm.
Subcritical Multiplicative Chaos for Regularized Counting Statistics from Random Matrix Theory.
Commun. Math. Phys. (2018) 360: 1. https://doi.org/10.1007/s00220-018-3130-z.

\bibitem{NSW2018}
M. Nikula, E. Saksman and C. Webb.
Multiplicative chaos and the characteristic polynomial of the CUE: the $L^1$-phase.
Preprint arXiv:1806.01831.


\bibitem{Pow2018}
E. Powell.
Critical Gaussian chaos: convergence and uniqueness in the derivative normalisation.
Electron. J. Probab. 23 (2018), no. 31, 1--26.

\bibitem{Rem2017}
G. Remy.
The Fyodorov-Bouchaud formula and Liouville conformal field theory.
Preprint arXiv:1710.06897.

\bibitem{RZ2018}
G. Remy and T. Zhu.
The distribution of Gaussian multiplicative chaos on the unit interval.
Preprint arXiv:1804.02942.

\bibitem{RV2010}
R. Rhodes and V. Vargas.
Multidimensional Multifractal Random Measures
Electron. J. Probab. Volume 15 (2010), paper no. 9, 241--258.

\bibitem{RV2017} R. Rhodes and V. Vagras. 
The tail expansion of Gaussian multiplicative chaos and the Liouville reflection coefficient. 
Preprint arXiv:1710.02096.

\bibitem{RoV2010} R. Robert and V. Vargas. 
Gaussian multiplicative chaos revisited. 
Ann. Probab., Volume 38, Number 2 (2010), 605-631.

\bibitem{RV2014}
R. Rhodes and V. Vargas.
Gaussian multiplicative chaos and applications: a review. 
Probab. Surv. 11 (2014), 315-- 392.

\bibitem{Sha2016}
A. Shamov.
On Gaussian multiplicative chaos.
J. Funct. Anal. 270 (9) (2016) 3224--3261.

\bibitem{SW2016}
E. Saksman and C. Webb. The Riemann zeta function and Gaussian multiplicative chaos: statistics on the critical line.
Preprint arXiv:1609.00027.

\bibitem{Web2015}
C. Webb.
The characteristic polynomial of a random unitary matrix and Gaussian multiplicative chaos - The $L^2$-phase.
Electron. J. Probab. Volume 20 (2015), paper no. 104, 21 pp.


\bibitem{Working}
M.D. Wong.
Fusion estimates for Gaussian multiplicative chaos. (Working title).
\end{thebibliography}

\end{document}